\numberwithin{equation}{section}
\theoremstyle{plain}
\newtheorem{theorem}{Theorem}[section]
\newtheorem{Prop}[theorem]{Proposition}
\newtheorem{Le}[theorem]{Lemma}
\newtheorem{Cor}[theorem]{Corollary}
\theoremstyle{remark}
\theoremstyle{definition}
\newcommand{\R}{\mathbb R}
\def\ga{\gamma}
\def\ra{\rightarrow}
\def\e{\emph}
\def\i{\infty}
\def\p{\partial}
\def\b{\begin}
\begin{document}

\title{%\flushleft
{A Rigidity  Property of Some Negatively Curved Solvable Lie
Groups}}
\author{Nageswari Shanmugalingam, Xiangdong Xie}
\date{  }

\maketitle

\begin{abstract}
  We show that for some negatively curved solvable Lie groups, all
self quasiisometries are almost isometries. We prove this by showing
that all self quasisymmetric maps of the ideal boundary (of the
solvable Lie groups)  are bilipschitz with respect to the visual
metric. We also define parabolic visual metrics on the ideal
boundary of Gromov hyperbolic spaces and relate them  to visual
metrics.
\end{abstract}

%\noindent
%{\bf{Mathematics Subject Classification(2000).}} Primary 53C23, 51F99, 57M20;
% Secondary  53C70, 57M60.
% \newline
%{\bf{Keywords.}}Key words: quasiisometry,  quasisymmetric map, negative curvature,
 %solvable Lie group, visual metric,parabolic visual metric.

%Tits boundary, Tits metric, CAT(0), 2-complex, geodesic.

%\keywords}    test  \endkeywords
%\subjclass  51  \endsubjclass

%\tableofcontents
{\bf{Keywords.}} quasiisometry, quasisymmetric map, negatively
curved solvable Lie groups.

%\keywords    test  \endkeywords
%\subjclass  51  \endsubjclass

%\tableofcontents

%\vspace{3mm} \noindent
 {\small {\bf{Mathematics Subject
Classification (2000).}} 20F65,  30C65, 53C20.

%\vspace{3mm} \noindent {\small {\bf{Mathematics Subject
%Classification (2000).}} 54F45, 30C65.
%20F65. %57M20, 20F67, 20E07.}

%30C65  (1991-now) Quasiconformal mappings in ${\bf R}^n$, other generalizations
%20F34 Fundamental groups and their automorphisms
%20F65 Geometric group theory
%20F67 Hyperbolic groups and nonpositively curved groups
%20F99 None of the above, but in this section
%20E07 Subgroup theorems; subgroup growth
%53C20 Global Riemannian geometry, including pinching
%53C23 Global topological methods
%54F45 Dimension theory
%57M20 Two-dimensional complexes
%57M60 Group actions in low dimensions
%(1) (2) (a) (b) (i) (ii)
%20F69 Asymptotic properties of groups

%$\{x_i\}_{i=1}^\i$  converges to $\xi\in \ol{X}$
% an index two subgroup

               %\vspace{3mm} \noindent {\small {\bf{Key words.}} }

%hyperbolic element, parabolic element,
%quasi-convex.}

\setcounter{section}{0} \setcounter{subsection}{0}

\section{Introduction}\label{s0}

In recent years, there have been a lot of interest in the large
scale geometry of solvable Lie groups and finitely generated
solvable groups (\cite{D}, \cite{EFW1}, \cite{EFW2}, \cite{FM1},
\cite{FM2}, \cite{FM3}, \cite{Pe}). In particular,
Eskin, Fisher and Whyte (\cite{EFW1}, \cite{EFW2}) proved the quasiisometric
rigidity of the 3-dimensional solvable Lie group Sol.
In this paper, we use quasiconformal analysis to prove a
rigidity property of some negatively curved solvable Lie groups.

Let $A$ be an $n\times n$ diagonal matrix with  real  eigenvalues
${\alpha_i}$   with $\alpha_{i+1}>\alpha_i>0$:
\[A=\left(\begin{array}{cccc}
{\alpha_1}I_{n_1}  &  {\bf{0}} & \cdots & {\bf{0}}\\
{\bf{0}}  &  {\alpha_2}I_{n_2}  & \cdots & {\bf{0}}\\
\cdots & \cdots &  \cdots  &  \cdots  \\
{\bf{0}} & {\bf{0}} & \cdots  &
{\alpha_r}I_{n_r}\end{array}\right),\]
where $I_{n_i}$ is the $n_i\times n_i$ identity matrix and the ${\bf{0}}$'s
are zero matrices (of various sizes).
Let $\R$  act on $\R^n$ by the linear transformations
$e^{tA}$ ($t\in \R$)    and we can form the semidirect product
$G_A=\R^n\rtimes_A \R$. That is, $G_A=\R^n\times\R$
as a smooth manifold, and the group operation is given
for all $(x,t), (y, s)\in\R^n\times\R$ by:
\[
    (x,t)\cdot (y, s)=(x+e^{tA}y, t+s).
\]
%for all $(x,t), (y, s)\in\R^n\times\R$.
The group $G_A$  is a
simply connected  solvable Lie group and is the
subject of study in this paper.

We endow $G_A$ with the left invariant metric determined by taking
the standard Euclidean metric at the identity of
$G_A\approx\R^n\times\R=\R^{n+1}$.
%\begin{equation}\label{eq:Metric}
 % d((x,t),(x_0,t_0))=\Vert (e^{-tA}(x_0-x), t_0-t)\Vert
  %                  =\sqrt{\Vert e^{-tA}(x_0-x)\Vert^2+|t-t_0|^2}.
%\end{equation}
With this  metric  $G_A$ has  sectional curvature $-\alpha_r^2\le
K\le - \alpha_1^2$ (and so is Gromov hyperbolic). Hence $G_A$ has a
well defined ideal boundary $\p G_A$. There is a so-called cone
topology on $\overline{G_A}=G_A\cup \p G_A$, in which $\p G_A$ is
homeomorphic to the $n$-dimensional sphere and $\overline{G_A}$ is
homeomorphic to the closed $(n+1)$-ball in the Euclidean space. For
each $x\in \R^n$,  the map $\gamma_x: \R\ra G_A$,
$\gamma_x(t)=(x,t)$  is a geodesic. We call such a geodesic a
vertical geodesic.  It can be checked  that all vertical geodesics
are asymptotic as $t\ra +\infty$. Hence they define a point $\xi_0$
in the ideal boundary $\p G_A$.

%All upward oriented geodesic rays (rays of the form $\sigma: [a,
%+\infty)\ra G_A$,  $\sigma(t)=(x, t)$, where $x\in \R^n$ is a fixed
%point and $a$ is a fixed real number) are asymptotic and they
%represent  a point $\xi_0$ in the ideal boundary of  $G_A$.
Since  $G_A$  is Gromov hyperbolic,  there is a family of visual
metrics on $\p G_A$. For each $\xi\in\p G_A$, there is also the
so-called parabolic visual metric on $\p G_A\backslash\{\xi\}$. The
relation between visual metrics and parabolic visual metrics is
analogous to the relation between spherical metric (on the sphere)
and the Euclidean metric (on the one point complement of the
sphere). See Section~\ref{parabolic} for a discussion   of all these
in the setting of Gromov hyperbolic spaces. We next recall
%For each $t\in R$,  $\R^n\times \{t\}\subset G_A$ is a
%hypersurface in $G_A$.
%These hypersurfaces form a foliation of
%Such a hypersurface will be called a
%horizontal
the parabolic  visual  metric $D$ on $\partial G_A$ viewed from $\xi_0$.

The set  $\p G_A\backslash\{\xi_0\}$ can be naturally identified
with $\R^n$ (see Section \ref{s1}).
% parabolic visual metric on $\p G_A$ associated to
%$\xi_0$ as follows.
Write $\R^n=\R^{n_1}\times \cdots \times \R^{n_r}$, where $\R^{n_i}$
is the eigenspace associated to the eigenvalue $\alpha_i$ of $A$.
Each point $x\in \R^n$ can be written as $x=(x_1, \cdots, x_r)$ with
$x_i\in \R^{n_i}$.     %In this note we consider the following
 The parabolic visual metric  $D$ on   $\p
 G_A\backslash\{\xi_0\}\approx \R^n$ is defined by:
\[
  D(x,y)=
    \max\{|x_1-y_1|,|x_2-y_2|^{\alpha_1/\alpha_2},\cdots,|x_r-y_r|^{\alpha_1/\alpha_r}\},
\]
for  all $x=(x_1,\cdots,x_r), y=(y_1, \cdots, y_r)\in\R^n$.

Let $\eta: [0,\i)\ra [0,\i)$ be a homeomorphism.
An embedding  of metric spaces
%A homeomorphism between metric spaces
$f:X\to Y$ is an
\e{$\eta$-quasisymmetric embedding} if for all distinct triples
$x,y,z\in X$, we have
\[
   \frac{d(f(x), f(y))}{d(f(x), f(z))}\le \eta\left(\frac{d(x,y)}{d(x,z)}\right).
\]
If $f$ is further assumed to be a  homeomorphism, we say it is
\e{$\eta$-quasisymmetric}.
A map $f:X\to Y$ is quasisymmetric if it is $\eta$-quasisymmetric
for some $\eta$.

 When $r\ge 2$,  Bruce Kleiner  has proved that (\cite{K})
   every self quasisymmetry of $\p G_A$ (equipped with a
   visual   metric)  preserves the horizontal foliation (see Section 3)
       and
     fixes the point $\xi_0$.
  This is one of the main ingredients in the proof of our main
  result.  Since Kleiner's   proof  is unpublished, we include a
  proof here for completeness.
 Notice that Kleiner's result implies that
  a self quasisymmetry of  $\p G_A$
  induces a self map of $(\R^n, D)$.

The following is the main result of this paper.

\begin{theorem}\label{intromain}
Let $G_A$  and $\xi_0\in \p G_A$ be as above. If $r\ge 2$, then
   every self quasisymmetry of $\p G_A$ (equipped with a
   visual   metric)
        is bilipschitz  on
   $\partial G_A\setminus\{\xi_0\}$
        with respect to the parabolic visual metric
$D$.  % and preserves horizontal foliations.
\end{theorem}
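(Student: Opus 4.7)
The plan is to use Kleiner's theorem to reduce the statement to a structural analysis of maps of $(\R^n, D)$, and then upgrade quasisymmetry to bilipschitz by exploiting the grading of $D$ coming from the eigenspace decomposition.

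First, by Kleiner's result (reproved in Section 3), every self quasisymmetry $f$ of $\partial G_A$ fixes $\xi_0$ and preserves the horizontal foliation, so $f$ restricts to a self homeomorphism $F$ of $\partial G_A\setminus\{\xi_0\}\approx\R^n$. By the comparison between visual metrics and the parabolic visual metric viewed from $\xi_0$, developed in Section \ref{parabolic}, this $F$ is automatically quasisymmetric with respect to $D$. One then interprets the horizontal foliation on the $\R^n$ side: its leaves should correspond to cosets of a distinguished subspace built from some of the eigenspaces $\R^{n_i}$, and preservation of this foliation by $F$ should force $F$ to act in a triangular manner with respect to the splitting $\R^n=\R^{n_1}\times\cdots\times\R^{n_r}$.

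With this structure in hand, the next step is to analyze the induced map on each factor $\R^{n_i}$, where $D$ restricts to the snowflake metric $|\cdot|^{\alpha_1/\alpha_i}$ of the Euclidean metric. The ambient quasisymmetry descends to a quasisymmetric map on each factor, but quasisymmetric self maps of a single snowflake are far from bilipschitz in general. To close the gap I would use the $\R$ action of $G_A$ by left translation on its boundary: the elements $(0,t)$ induce dilations of $(\R^n, D)$ that scale each eigenspace at its own Euclidean rate $e^{t\alpha_i}$ while scaling $D$ uniformly by $e^{t\alpha_1}$. Taking weak tangents of $F$ under these dilations at generic points should produce homogeneous limiting maps; combined with the rigidity coming from having $r\ge 2$ distinct scaling exponents which all must be respected simultaneously, this should pin down each factor map as bilipschitz in the Euclidean sense, hence bilipschitz in the corresponding snowflake.

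The last step is to reassemble the factorwise bilipschitz bounds using the max form of $D$ to deduce that $F$ itself is bilipschitz on $(\R^n, D)$. The principal obstacle will be the quasisymmetry to bilipschitz upgrade of the previous paragraph: the conclusion simply fails when $r=1$, so any correct argument must honestly exploit the interplay of different eigenspaces, using the non isotropic dilation action together with the foliation preservation to rule out snowflake distortions within each factor. Executing the weak tangent argument and identifying the tangent maps with enough precision to conclude bilipschitzness is where I expect the real work to lie.
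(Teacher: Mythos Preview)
Your reduction to a quasisymmetric self map of $(\R^n,D)$ is in line with the paper: the fixed point $F(\xi_0)=\xi_0$ is established in Section~\ref{parabolic} via a modulus argument on the sphericalization, and the passage from visual to parabolic visual metric then gives a quasisymmetry of $(\R^n,D)$, to which Theorem~\ref{fixed} applies. So far so good.

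The core step, upgrading quasisymmetry to bilipschitz on $(\R^n,D)$, is handled in the paper (Theorem~\ref{main}) by a mechanism quite different from your weak tangent proposal, and more elementary. The paper splits only at the \emph{first} level, $\R^n=\R^{n_1}\times Y$, writing $F(x,y)=(H(x,y),G(y))$. The key is Lemma~\ref{l1}: directly from the max form of $D$ and $\eta$-quasisymmetry one gets $L_G(y,r)\le\eta(1)\,l_{H(\cdot,y)}(x,r)$ for \emph{every} $x$, so the infinitesimal dilation of the quotient map $G$ at $y$ pins down that of the fiber map $H(\cdot,y)$ uniformly in $x$. Since $\R^{n_1}$ carries the Euclidean metric and is geodesic, this forces each $H(\cdot,y)$ to be a quasisimilarity with constant $\asymp l_G(y)$. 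A comparison along horizontal rays (Lemma~\ref{l3}) shows these constants are uniform in $y$. The quotient map $G$ on $(Y,D_Y)$ is then handled by induction on $r$, and Lemma~\ref{l4} reassembles everything. No blow-ups or tangent maps are used; the interplay of the exponents enters only through the cross-comparison in Lemma~\ref{l1} and the induction.

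Your route via anisotropic dilations and weak tangents is plausible in spirit but, as written, has two real obstacles. First, there is no single ``induced map on the factor $\R^{n_i}$'' for $i<r$: the triangular structure gives fiber maps depending on the remaining coordinates, and quotient maps on $\R^{n_i}\times\cdots\times\R^{n_r}$, not a well-defined map on $\R^{n_i}$ alone. Second, and more seriously, even if Pansu-type tangents exist a.e.\ and are graded linear (hence bilipschitz), passing from pointwise infinitesimal bilipschitz control to global bilipschitz requires integrating along rectifiable curves; in $(\R^n,D)$ the only rectifiable curves are horizontal, so you cannot integrate in the $Y$ directions. The paper sidesteps exactly this by always applying the infinitesimal-to-global step on the geodesic factor $\R^{n_1}$ and pushing the rest into the induction on $G$. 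If you pursue the weak tangent approach, you will need a substitute for this step; the paper's Lemma~\ref{l1} is precisely such a substitute, and once you have it the weak tangents become unnecessary.
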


 One should compare this with quasiconformal maps on
  Euclidean spaces (\cite{GV})  and Heisenberg groups  (\cite{B}), where there are
       non-bilipschitz
quasiconformal  maps. On the other hand, the conclusion of
Theorem~\ref{intromain} is not as strong as in the cases of
quaternionic
  hyperbolic spaces,
Cayley plane (\cite{P2}) and Fuchsian buildings (\cite{BP},  \cite{X}),
where every quasisymmetric map of the ideal boundary is actually a
conformal map. In our case, there are many  non-conformal
quasisymmetric maps of the ideal boundary of $G_A$.
We also remark that in~\cite[Section~15]{T2} Tyson has previously classified
(quasi)metric spaces of the form $(\R^n, D)$ up to quasisymmetry.

We list three consequences of Theorem       \ref{intromain}.

Let $L\ge 1$ and $C\ge 0$. A (not necessarily continuous ) map
$f:X\ra Y$ between two metric spaces is an
$(L,A)$-\e{quasiisometry} if:
\newline (1) $d(x_1,x_2)/L-C\le d(f(x_1), f(x_2))\le L\, d(x_1, x_2)+C$
for all $x_1,x_2\in X$;
\newline (2)  for any $y\in Y$, there is some $x\in X$ with
$d(f(x), y)\le C$.
\newline
%In the case $L_1=L_2$,  we say $f$ is  a  quasisimilarity.
In the case   $L=1$, we call  $f$  an \e{almost isometry}.

\begin{Cor}\label{c0}
%Let $G_A$ be as above.
Assume that $r\ge 2$. Then every self quasiisometry of $G_A$ is an
almost isometry.
\end{Cor}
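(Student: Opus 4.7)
The plan is to promote the boundary bilipschitz conclusion of Theorem \ref{intromain} to an almost isometry on all of $G_A$ via the standard boundary extension principle for Gromov hyperbolic spaces.

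First, any $(K,C)$-quasiisometry $\phi\colon G_A\to G_A$ extends canonically to an $\eta$-quasisymmetric self-homeomorphism $\partial\phi$ of $\partial G_A$ equipped with any visual metric; this is a well-known consequence of Gromov hyperbolicity. By Kleiner's theorem recalled in the introduction, $\partial\phi$ fixes $\xi_0$, so Theorem \ref{intromain} applies and $\partial\phi$ is $L$-bilipschitz on $(\partial G_A\setminus\{\xi_0\}, D)\cong(\R^n, D)$ for some $L\ge 1$.

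Second, I would use the isometry group of $G_A$ to normalize $\phi$. Left multiplication by $(0,s)\in G_A$ is an isometry whose boundary action $x\mapsto e^{sA}x$ scales $D$ by $e^{s\alpha_1}$, while left multiplication by $(y,0)$ is an isometry acting as a $D$-isometric translation of $\R^n$. Composing $\phi$ with a suitable element of this subgroup produces an equivalent quasiisometry whose boundary extension fixes a chosen basepoint of $\R^n$ and whose bilipschitz constant is normalized symmetrically about $1$; in particular the composition still extends to an $L$-bilipschitz map of $(\R^n, D)$.

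Finally, I would convert the boundary control into interior distance control. For points $p=(x,t)$ and $q=(x',t')$ in $G_A$, the distance $d_{G_A}(p,q)$ admits, up to a bounded additive error, an expression in terms of $D(x,x')$ and the heights $t, t'$ — roughly of the form $|t-t'|$ plus twice the amount by which a vertical geodesic must descend to reach a height at which the horospherical distance of $x$ and $x'$ becomes $O(1)$, a descent governed by $\log D(x,x')$. Since $\partial\phi$ is $L$-bilipschitz with respect to $D$, the quantity $\log D(x,x')$ is distorted by only an additive $\log L$ under $\phi$; together with bounded additive distortion of the heights $t$ (which follows from $\phi$ approximately preserving vertical geodesics, a consequence of Kleiner's fixed-point property at $\xi_0$), this yields $|d_{G_A}(\phi(p), \phi(q))-d_{G_A}(p,q)|\le C'$ for a uniform constant $C'$, which is the almost isometry conclusion. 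The main obstacle will be establishing the distance formula uniformly across all heights and horospherical positions, in particular smoothing the transition between the regime where $p$ and $q$ lie essentially on a common horosphere and the regime where they lie essentially on a common vertical geodesic; this requires carefully tracking how the horizontal foliation and vertical geodesics interact under $\phi$, using the foliation-preservation half of Kleiner's theorem.
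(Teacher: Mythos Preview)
Your overall strategy is the same as the paper's: deduce boundary bilipschitz from Theorem~\ref{intromain}, establish a distance formula expressing $d_{G_A}((x,t),(x',t'))$ additively in terms of $\log D_e(x,x')$ and the heights $t,t'$ (this is Lemma~\ref{g3} in the paper, proved via the quasicenter arguments of Lemmas~\ref{g1} and~\ref{quasicen}), and combine these to get the additive distance bound. The normalization step you propose via the isometry group is harmless but unnecessary; the paper does without it.

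There is, however, a genuine gap in your justification of the height control. You write that ``bounded additive distortion of the heights $t$'' follows from ``$\phi$ approximately preserving vertical geodesics, a consequence of Kleiner's fixed-point property at $\xi_0$''. This is not enough: fixing $\xi_0$ only guarantees that $\phi(\gamma_x)$ lies within bounded Hausdorff distance of $\gamma_{\partial\phi(x)}$ \emph{as a set}, and gives no control on the height parametrization along that geodesic. Indeed, in the $r=1$ case (real hyperbolic space) there are quasiisometries fixing $\xi_0$ whose boundary maps are quasisymmetric but not bilipschitz, and these are \emph{not} height-respecting. What actually forces bounded height distortion is the bilipschitz conclusion of Theorem~\ref{intromain} itself: the height of $(x,t)$ equals, up to bounded error, $\log D_e(p,q)$ for any pair $p,q\in\partial G_A\setminus\{\xi_0\}$ having $(x,t)$ as a quasicenter of $\{p,q,\xi_0\}$, and bilipschitzness of $\partial\phi$ carries this quantity across with only the additive error $\log L$. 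The paper isolates exactly this equivalence as Lemma~\ref{hightres} and records it as Corollary~\ref{c2} before invoking it in the proof of Corollary~\ref{c0}. You have the correct ingredient in hand --- you just attributed it to the wrong hypothesis.
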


Notice that an almost isometry is not necessarily a finite distance
away from an isometry.

The following result was previously obtained by B.   Kleiner
\cite{K}.

\begin{Cor}\label{finitege}
%Let $G_A$ be as above.
If $r\ge 2$, then $G_A$ is not
quasiisometric to any finitely generated group.
\end{Cor}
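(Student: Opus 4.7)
The plan is to derive a contradiction by combining Corollary \ref{c0} with the non-unimodularity of $G_A$.

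First I would set up the standard quasi-action. Given a quasi-isometry $\Phi: \Gamma \to G_A$ with quasi-inverse $\bar\Phi$, for each $\gamma \in \Gamma$ define $\psi_\gamma = \Phi \circ L_\gamma \circ \bar\Phi$, where $L_\gamma$ is left multiplication by $\gamma$ on $\Gamma$. Each $\psi_\gamma$ is a self quasi-isometry of $G_A$ with constants uniform in $\gamma$, and the resulting quasi-action is cobounded and uniformly metrically proper. By Corollary \ref{c0}, each $\psi_\gamma$ is in fact a $(1, C_0)$-quasi-isometry for some uniform $C_0$.

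Next I would pin down the structure of these almost isometries on the boundary. By Kleiner's theorem recalled in the introduction, each $\psi_\gamma$ fixes $\xi_0$, and by Theorem \ref{intromain} the induced map on $\p G_A \setminus \{\xi_0\} \approx \R^n$ is bilipschitz with respect to $D$. A refinement tailored to the $(1, C_0)$ case should show that each such boundary map is in fact a \emph{similarity} of $(\R^n, D)$: on the factor $\R^{n_i}$ it is the composition of a translation, an orthogonal rotation, and a scaling by $e^{\alpha_i t(\gamma)/\alpha_1}$, where $t(\gamma) \in \R$ records the vertical translation amount of $\psi_\gamma$. Modulo bounded perturbation, the assignment $\gamma \mapsto \psi_\gamma$ thus yields a homomorphism from $\Gamma$ into the similarity group of $(\R^n, D)$, which I would identify with $G_A \rtimes K$ for a compact group $K$ of orthogonal rotations preserving each eigenspace.

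Finally, I would argue that coboundedness and uniform properness of the quasi-action force the image of $\Gamma$ to be a uniform lattice in $G_A \rtimes K$. Since $\operatorname{tr}(A) = \sum_i n_i \alpha_i > 0$, the group $G_A$ is non-unimodular, and hence so is $G_A \rtimes K$ (adjoining a compact factor does not change the modular function). By a classical result of Mostow no connected non-unimodular Lie group admits a lattice, contradicting the existence of $\Gamma$.

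The main obstacle I expect is the second step: upgrading Theorem \ref{intromain} from ``bilipschitz with respect to $D$'' to ``similarity with respect to $D$'' for the subclass of almost isometries, and then straightening the resulting uniform quasi-action by similarities to an honest discrete cocompact action on $G_A \rtimes K$. The $L=1$ hypothesis in Corollary \ref{c0} should be exactly what makes this refinement possible, since an additive bound $|d(\psi_\gamma(x), \psi_\gamma(y)) - d(x,y)| \le C_0$ propagates to a boundary map whose multiplicative distortion of the visual metric tends to $1$ as distances grow, and this asymptotic conformality is what pins down the similarity structure.
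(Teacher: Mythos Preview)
Your approach is genuinely different from the paper's, and considerably heavier. The paper's proof bypasses Corollary~\ref{c0}, non-unimodularity, and lattice arguments entirely. It uses only the following consequence of Theorem~\ref{intromain}: every self-quasisymmetry of $\partial G_A$ fixes the distinguished point $\xi_0$. Conjugating the $G$-action on $\partial G$ (by left translations) through the boundary map $\partial f$ gives an action of $G$ on $\partial G_A$ by quasisymmetries; since each such map fixes $\xi_0$, the original $G$-action on $\partial G$ has a global fixed point. A Gromov hyperbolic group whose boundary action has a global fixed point is virtually infinite cyclic, hence $\partial G$ has two points, contradicting $\partial G_A \cong S^n$ with $n\ge 2$. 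That is the whole argument.

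Your route could be made to work, but the obstacle you flag is real and nontrivial: upgrading Theorem~\ref{intromain} from ``bilipschitz'' to ``similarity'' under the $(1,C_0)$ hypothesis, and then straightening the uniform quasi-action to an honest cocompact action on $G_A\rtimes K$, are both genuine pieces of work not supplied by anything in the paper. The non-unimodularity contradiction at the end is standard, but getting there requires these intermediate steps. By contrast, the paper's fixed-point argument needs only the ``$F(\xi_0)=\xi_0$'' part of Theorem~\ref{intromain} and an elementary fact about hyperbolic groups, so it is both shorter and logically lighter.
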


In the identification of $G_A$ with $\R^n\times \R$,  we view the
map $h:\R^n\times \R$, $h(x,t)=t$ as the height function.
A quasiisometry $\varphi$ of $G_A$ is height-respecting if %it permutes the
%level set of $h$ up to a bounded distance.
$|h(\varphi(x,t))-t|$ is bounded independent of $x,t$.

\begin{Cor}\label{c2}
%Let $G_A$ be as above.
Assume that $r\ge 2$.  Then all self
quasiisometries of $G_A$ are height-respecting.
\end{Cor}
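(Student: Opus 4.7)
The plan is to combine Theorem~\ref{intromain} with Kleiner's theorem (recalled just before Theorem~\ref{intromain}) and the standard boundary extension of a quasiisometry between Gromov hyperbolic spaces. Let $\varphi\colon G_A\to G_A$ be a self quasiisometry. Since $G_A$ is Gromov hyperbolic, $\varphi$ extends to a self quasisymmetry $\partial\varphi$ of $\p G_A$ with respect to any visual metric. By Kleiner's theorem (valid since $r\ge 2$) we have $\partial\varphi(\xi_0)=\xi_0$, and by Theorem~\ref{intromain} the restriction of $\partial\varphi$ to $\p G_A\setminus\{\xi_0\}\approx\R^n$ is $L$-bilipschitz with respect to the parabolic visual metric $D$ for some $L\ge 1$.

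The heart of the proof is then a translation principle: bilipschitz behavior of $\partial\varphi$ on $(\R^n,D)$ is equivalent to $\varphi$ shifting the Busemann function $b_{\xi_0}$ at $\xi_0$ by a uniformly bounded amount. Using the dictionary between visual and parabolic visual metrics developed in Section~\ref{parabolic}, one has the comparison
\[
D(\xi,\eta)\asymp e^{-b_{\xi_0}(p_{\xi,\eta})},
\]
where $p_{\xi,\eta}$ is an ``apex'' of a bi-infinite geodesic $(\xi,\eta)$, i.e., a point on the geodesic where $b_{\xi_0}$ attains its minimum (the closest approach to $\xi_0$ in the horospherical sense). Since $\varphi$ sends geodesics uniformly close to geodesics by the Morse lemma, applying this comparison on both sides of the bilipschitz inequality for $\partial\varphi$ yields a uniform estimate
\[
|b_{\xi_0}(\varphi(p))-b_{\xi_0}(p)|\le C
\]
whenever $p$ is such an apex. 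Every point of $G_A$ lies within bounded distance of some apex: given $(x,t)\in\R^n\times\R$, pick two points in $\R^n$ symmetric about $x$ of appropriate separation as boundary endpoints. Hence the bound extends to all of $G_A$. Finally, since all vertical geodesics are asymptotic to $\xi_0$, one has $b_{\xi_0}(x,t)=-t$ up to a uniform additive constant, so the displayed bound becomes $|h(\varphi(x,t))-t|\le C'$, which is precisely the height-respecting condition.

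The main obstacle I anticipate is bookkeeping rather than novelty: establishing the comparison $D\asymp e^{-b_{\xi_0}(\mathrm{apex})}$, presumably set up carefully in Section~\ref{parabolic}, and tracking quasiisometry constants through the Morse-lemma comparison of apex points. Once these standard Gromov-hyperbolic estimates are in place, the corollary follows from Theorem~\ref{intromain} and Kleiner's theorem in an essentially formal way.
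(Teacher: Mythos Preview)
Your proposal is correct and follows essentially the same route as the paper. The paper packages the argument as Lemma~\ref{hightres} (height-respecting $\Leftrightarrow$ boundary map bilipschitz in $D$) and proves the needed direction via the notion of \emph{quasicenters}: the apex of a bi-infinite geodesic $(\xi,\eta)$ is a quasicenter of $\xi_0,\xi,\eta$ (Lemma~\ref{g1}), the point $(p,t_0)$ with $D_e(p,q)=e^{t_0}$ is another such quasicenter (Lemma~\ref{quasicen}), and a quasiisometry sends quasicenters to quasicenters up to bounded error. Your ``apex/Busemann'' phrasing and the paper's ``quasicenter'' phrasing are the same argument in different language; the step you flag as bookkeeping---that $\varphi(\text{apex})$ is close to the new apex---is exactly what the quasicenter lemmas supply.
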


The question of whether a quasiisometry of $G_A$ is
height-respecting is important for the following three reasons.
First, Mosher and Farb (\cite{FM1}) have classified a large class of
solvable Lie groups (including groups of type $G_A$) up to
height-respecting quasiisometries. Second, there is no known
examples of non-height-respecting quasiisometries except for rank
one symmetric spaces of noncompact type. Finally, showing a
quasiisometry is height-respecting is a  key
     step in the proof of
the quasiisometric rigidity of Sol ({\cite{EFW1}, \cite{EFW2}).

When $r=1$, the group $G_A$ is isometric to a rescaling of the real hyperbolic
space. In this case, all the above results fail.

This paper is structured as follows.
   In Section \ref{s1} we review some basics about the group $G_A$.
In Section~3 we prove that quasisymmetric self-maps of $\p
G_A\setminus\{\xi_0\}$ equipped with the parabolic visual metric
preserve horizontal foliations, and in Section~4 we will prove that
such maps are bilipschitz with respect to this metric. The main
result of this paper, Theorem~\ref{intromain}, is proven in
Section~5, where a discussion of  parabolic visual metrics on the
ideal
  boundary
 and their connection to the
visual metrics %on the ideal boundary  $\p G_A$ and their connection
%to the parabolic visual metric on  $\p G_A\setminus\{\xi_0\}$
   can
also be found. In Section~6 we provide the proofs of the Corollaries
stated in Section~1.

\noindent {\bf{Acknowledgment}}. {We would like to thank Bruce
Kleiner for helpful discussions.  The second author would also like
to thank the Department of Mathematical Sciences at Georgia Southern
University    for generous travel support.}

%%%%%%%%%%%%%%%%%%%%%%%%%%%%%%%%%%%%%%%%%%%%%%%%%%%%%%%%%%%%%%%%%%%%%
%%%%%%%%%%%%%%%%%%%%%%%%%%%%%%%%%%%%%%%%%%%%%%%%%%%%%%%%%%%%%%%%%%%%%
\section{The Solvable Lie Groups  $G_A$}\label{s1}

In this section we review some basic facts about the group $G_A$
 and define several parabolic visual (quasi)metrics on the
 ideal boundary.

Let $A$ and $G_A$
  be as in the Introduction.
We endow $G_A$ with the left invariant metric determined by taking
the standard Euclidean metric at the identity of
$G_A\approx\R^n\times\R=\R^{n+1}$.
%\begin{equation}\label{eq:Metric}
 % d((x,t),(x_0,t_0))=\Vert (e^{-tA}(x_0-x), t_0-t)\Vert
  %                  =\sqrt{\Vert e^{-tA}(x_0-x)\Vert^2+|t-t_0|^2}.
%\end{equation}
At a point $(x,t)\in\R^n\times\R\approx G_A$, the tangent space is
identified with $\R^n\times\R$, and the Riemannian metric is given
by the symmetric matrix
\[
  \left(\begin{array}{cc}e^{-2tA} & 0\\ 0 & 1\end{array}\right).
\]
With this    metric  $G_A$ has  sectional curvature $-\alpha_r^2\le
K\le - \alpha_1^2$. Hence $G_A$ has a well defined ideal boundary
$\p G_A$.   All vertical geodesics  $\gamma_x$ ($x\in \R^n$)  are
asymptotic as $t\ra +\infty$. Hence they define a point $\xi_0$ in
the ideal boundary $\p G_A$.

    The sets $\R^n\times\{t\}$  ($t\in \R$)
are horospheres centered at $\xi_0$.
      For each  $t\in \R$, the
induced metric on  the horosphere $ \R^n\times\{t\}\subset G_A$   is
determined by the quadratic form $e^{-2tA}$. This metric has
distance formula $d_{\R^n\times \{t\}}((x,t), (y,t))=|
e^{-tA}(x-y)|$.  Here $|\cdot |$ denotes the Euclidean norm.
   The
distance between two horospheres, corresponding to $t=t_1$ and
$t=t_2$, is $|t_1-t_2|$. It follows that for $(x_1,t_1),(x_2,t_2)\in
G_A=\R^n\times\R$,
\begin{equation}\label{eq:lowerbound}
  d((x_1,t_1),(x_2,t_2))\ge |t_1-t_2|.
\end{equation}

Each geodesic ray in $G_A$ is  asymptotic to either  an upward
oriented vertical geodesic or a downward oriented vertical geodesic.
The upward oriented geodesics are asymptotic to $\xi_0$ and the
downward oriented vertical  geodesics are in 1-to-1 correspondence
with $\R^n$. Hence $\p G_A\backslash\{\xi_0\}$ can be naturally
identified with $\R^n$.

Given $x, y\in\R^n\approx \p G_A\backslash\{\xi_0\}$, the  parabolic
visual quasimetric $D_e(x,y)$ is defined as follows:
${D}_e(x,y)=e^t$, where $t$ is the unique real number such that at
height $t$ the two vertical geodesics $\ga_x$ and $\ga_y$ are at
distance one apart in the horosphere; that is, $d_{\R^n\times
\{t\}}((x,t), (y,t))=| e^{-tA}(x-y)|=1.$  Here the subscript
{\it{e}} in $D_e$  means it corresponds to the Euclidean norm. This
definition of parabolic visual  quasimetric is very natural, but
$D_e$ does not have a simple formula. Next we describe another
parabolic visual quasimetric   which  is bilipschitz equivalent with
$D_e$ and admits a simple formula.
 Recall that a quasimetric on a set $A$ is a function $\rho:
    A \times A\rightarrow [0, \i)$ satisfying: (1) $\rho(x,y)=\rho(y,x)$ for
    all $x, y\in A$; (2) $\rho(x,y)=0$ only when $x=y$; (3) there
    is a constant  $L\ge 1$ such that $\rho(x, z)\le
    L(\rho(x,y)+\rho(y,z))$ for all $x, y,z\in A$.

 %Dymarz~\cite{D} has worked out a formula for $\tilde D$.
 In addition to the Euclidean norm,
   there is  another norm on $\R^n$ that is naturally associated to   $G_A$.
Write $\R^n=\R^{n_1}\times \cdots \times \R^{n_r}$, where $\R^{n_i}$
is the eigenspace associated to the eigenvalue $\alpha_i$ of $A$.
Each point $x\in \R^n$ can be written as $x=(x_1, \cdots, x_r)$ with
$x_i\in \R^{n_i}$.
    The block supernorm is  given by:
   $|x|_s=\max\{|x_1|, \cdots, |x_r|\}$  for  $x=(x_1, \cdots, x_r)$.   Using this norm one can
   define another parabolic visual quasimetric on
$\p G_A \backslash \{\xi_0\}$ as follows: ${D}_s(x,y)=e^t$, where
$t$ is the unique real number such that at height $t$ the two
vertical geodesics $\ga_x$ and $\ga_y$ are at distance one apart
with respect to the norm $|\cdot|_s$; that is, $|
e^{-tA}(x-y)|_s=1.$   Here the subscript {\it{s}} in  $D_s$  means
it corresponds to the block supernorm  $|\cdot|_s$. Then    $D_s$
is given by \cite[Lemma~7]{D}:
\[
   {D_s}(x,y)=\max\{|x_1-y_1|^{\frac{1}{\alpha_1}},
  \cdots,|x_r-y_r|^{\frac{1}{\alpha_r}}\},
\]
for  all $x=(x_1,\cdots,x_r), y=(y_1, \cdots, y_r)\in\R^n$. %%Here
%$|a-b|$ denotes the Euclidean distance between $a$ and $b$.

Notice that $|x|_s\le |x|\le \sqrt{r}   \, |x|_s$  for all $x\in
\R^n$.
 Using this, one  can  verify the following elementary lemma,
 whose proof is left to the reader.

 \b{Le}\label{norms}
 {For all $x, y\in \R^n$ we have
 $D_s(x,y)\le D_e(x,y)\le r^{1/{2\alpha_1}} D_s(x,y)$.}
\end{Le}

  In general,  $D_s$ does not satisfy the triangle inequality.
  However,
  for each $0<\epsilon\le \alpha_1$, the function $D_s^\epsilon$
is always a metric, called a parabolic  visual metric. In this paper
we consider the following parabolic visual metric
\[
  D(x,y)=D^{\alpha_1}_s(x,y)=
    \max\{|x_1-y_1|,|x_2-y_2|^{\alpha_1/\alpha_2},\cdots,|x_r-y_r|^{\alpha_1/\alpha_r}\}.
\]
With respect to this metric the rectifiable curves in $\R^n\approx
\p G_A\setminus\{\xi_0\}$ are necessarily curves of the form
$\gamma:I\to\R^n$ with $\gamma(t)=(\gamma_1(t),c_2,\cdots,c_r)$
where $c_i\in\R^{n_i}$, $2\le i\le r$, are constant vectors. This
follows from the fact that the directions corresponding to
$\R^{n_i}$, $i\ge 2$, have their Euclidean distance components
``snowflaked'' by the power $\alpha_1/\alpha_i<1$.

\section{Quasisymmetric maps preserve horizontal foliations}\label{hor}

%It is well-known that each quasiisometry between two  Gromov
%hyperbolic spaces  induces a quasisymmetric map between their Gromov
%boundaries, where the Gromov boundaries are equipped with the visual
%metrics.
%In this section we show that the point $\xi_0\in \p G_A$ is fixed
%by the boundary map of every  self-quasiisometry $G_A\ra G_A$, and
%furthermore the boundary map preserves a natural  foliation on $\p
%G_A\backslash{\xi_0}$.

%Let $G_A$ be as before with $r\ge 2$, and $\p G_A$ be equipped with
%a visual   metric.
In this section we show that every
self-quasisymmetry of  $\p G_A$ fixes the point $\xi_0\in\p G_A$ and
preserves a natural  foliation on $\p G_A\backslash\{\xi_0\}$.

Recall that a metric space $X$ endowed with a Borel measure $\mu$ is
an \emph{Ahlfors Regular space of dimension Q} (for short,  a
$Q$-regular space) if there exists a constant $C_0\ge 1$ so that
\[
  C_0^{-1} r^Q\le \mu(B_r)\le C_0 r^Q
\]
for every ball $B_r$ with radius  $r<{\text{diam}} (X)$.

We need the following result; see~\cite{T1} for the definition of
the modulus $\text{Mod}_Q$ of a family of curves.

\begin{theorem}[\text{\cite[Theorem~1.4]{T1}}]\label{tyson}
Let $X$ and $Y$ be locally compact, connected, $Q$-regular metric
spaces ($Q>1$)  and let $f:X\ra Y$ be an $\eta$-quasisymmetric
homeomorphism.  Then there is a constant $C$ depending only on
$\eta$, $Q$  and the regularity constants of $X$ and $Y$ so that
\[
   \frac{1}{C}\,\text{Mod}_Q \Gamma \le
           \text{Mod}_Q f(\Gamma)\le C\, \text{Mod}_Q \Gamma
\]
for every curve family $\Gamma$ in $X$. %Here $\text{Mod}_Q \Ga$
%denotes the $Q$-modulus of the curve family $\Ga$.
\end{theorem}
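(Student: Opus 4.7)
The plan is to prove only the upper bound $\text{Mod}_Q f(\Gamma) \le C\,\text{Mod}_Q\Gamma$; the lower bound follows by applying the same argument to $f^{-1}$, which is $\eta'$-quasisymmetric for some $\eta'$ depending only on $\eta$. It therefore suffices to show that, given any admissible density $\rho \ge 0$ for $\Gamma$ on $X$, one can construct an admissible density $\tilde\rho$ for $f(\Gamma)$ on $Y$ satisfying $\int_Y \tilde\rho^Q\,d\mu_Y \le C \int_X \rho^Q\,d\mu_X$ with $C$ depending only on $\eta$, $Q$, and the regularity constants of $X$ and $Y$.

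The construction of $\tilde\rho$ must substitute for a nonexistent Jacobian of $f$. For each $r > 0$ I would introduce the quasisymmetric distortion functions
\[
  L_f(x,r) = \sup\{d(f(x), f(z)) : d(x,z) \le r\}, \qquad
  \ell_f(x,r) = \inf\{d(f(x), f(z)) : d(x,z) \ge r\}.
\]
$\eta$-quasisymmetry gives $L_f(x,r) \le \eta(1)\,\ell_f(x,r)$, and combined with $Q$-regularity this yields the volume comparison $\mu(f(B(x,r))) \asymp L_f(x,r)^Q \asymp \ell_f(x,r)^Q$, since $B(f(x),\ell_f(x,r)) \subset f(B(x,r)) \subset B(f(x),L_f(x,r))$. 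Heuristically I would set $\tilde\rho(y) \approx \rho(f^{-1}(y)) \cdot L_{f^{-1}}(y,r)/r$, making the scale $r = r(y)$ rigorous through a Whitney-style dyadic decomposition of $Y$ adapted to $\rho$.

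Admissibility of $\tilde\rho$ for $f(\Gamma)$ is then verified by discretizing any curve $\gamma' = f\circ \gamma$ into short subarcs of diameter $\asymp r$; each such subarc pulls back through $f^{-1}$ to a subarc of diameter at most $L_{f^{-1}}(y,r)$, so summing yields $\int_{\gamma'}\tilde\rho\,ds \gtrsim \int_\gamma \rho\,ds \ge 1$. The $Q$-energy bound $\int_Y \tilde\rho^Q\,d\mu_Y \le C\int_X \rho^Q\,d\mu_X$ then follows from the volume comparison above together with a Vitali-type covering argument on $Y$.

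The principal obstacle is precisely the absence of a pointwise derivative for $f$: every infinitesimal fact must be upgraded to a uniform ball estimate, and the combinatorial bookkeeping required to guarantee simultaneously the pointwise admissibility condition on every rectifiable curve and a controlled total $Q$-energy is where the genuine technical work lies. I expect the admissibility step to be the sharpest point, since quantifying how a rectifiable curve in $Y$ lifts through $f^{-1}$ requires local compactness (to pass to limits of polygonal approximations) and connectedness (to ensure enough joining curves exist at each scale), both of which enter the Tyson hypotheses for exactly this reason.
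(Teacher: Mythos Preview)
The paper does not prove this statement at all: Theorem~\ref{tyson} is quoted verbatim from \cite[Theorem~1.4]{T1} and used as a black box (in the proofs of Theorem~\ref{fixed} and Theorem~\ref{intromain}). There is therefore no ``paper's own proof'' to compare your proposal against.

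Your sketch is a reasonable outline of the strategy behind Tyson's original argument, but since the present paper simply cites the result, the appropriate response here is to do the same rather than to reprove it.
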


%Recall that the point $\xi_0\in \p G_A$ is the point obtained from
%upward oriented vertical rays. Each downward oriented vertical ray
%corresponds to a unique point in $\R^n$. Since each geodesic ray in
%$G_A$ is asymptotic to either an upward oriented vertical ray or a
%downward oriented vertical ray, we may identify $\p
%G_A\backslash\{\xi_0\}$ with
 Recall that we write $\R^n$ as
   $\R^n=\R^{n_1}\times \cdots \times
\R^{n_r}$. Set $Y=\R^{n_2}\times \cdots \times \R^{n_r}$ and write
$\R^n=\R^{n_1}\times Y$. Since we assume $r\ge 2$, the set $Y$ is
nontrivial.
     The subsets $\{\R^{n_1}\times \{y\}: y\in Y\}$ form a
foliation of $\R^n$. We call this foliation the horizontal foliation
and each leaf $\R^{n_1}\times\{y\}$ a horizontal leaf.
%Let $\tilde D=D^{\alpha_1}$. Then for $x=(x_1,\cdots, x_r)$,
%$y=(y_1, \cdots, y_r)\in\R^n$,
%$$\tilde D(x,y)=\max\{|x_1-y_1|,|x_2-y_2|^{\frac{\alpha_1}{\alpha_2}},
%\cdots,|x_r-y_r|^{\frac{\alpha_1}{\alpha_r}}\}, $$
%and $\tilde D$ is a  metric on $\R^n$.
Since $\frac{\alpha_1}{\alpha_i}<1$ for all $2\le i\le r$, we notice
that a curve in $(\R^n,  D)$ is not rectifiable if it is not
contained in a horizontal leaf.

Observe that $(\R^{n_i}, |\cdot|^{\alpha_1/\alpha_i})$ with
the Hausdorff measure (which is comparable to the $n_i$-dimensional Lebesgue
measure) is $n_i\alpha_i/\alpha_1$-regular. Let
$\mu$ be the product of the Hausdorff measures on the factors
$(\R^{n_i},|\cdot|^{\alpha_1/\alpha_i})$. Then it is easy
to see that $(\R^n,D)$ with the measure $\mu$ is $Q$-regular with
$Q=\Sigma_{i=1}^r  n_i\frac{\alpha_i}{\alpha_1}$. It follows that
Theorem~\ref{tyson} applies to the metric space $(\R^n,D)$. We also
point out here that the Hausdorff measure $\mu$ is comparable to the
canonical $n$-dimensional Lebesgue measure on $\R^n$.

\begin{theorem}\label{fixed}
If $r\ge 2$, then every quasisymmetry
$F:(\R^n, D)\ra (\R^n, D)$
%the point $\xi_0\in \p G_A$  and
preserves the horizontal foliation on $\R^n$.
 %$\p G_A\backslash\{\xi_0\}$.
%For every quasiisometry $f: G_A\ra G_A$, the boundary map $\p f: \p
%G_A\ra \p G_A$ fixes the point $\xi_0$. Furthermore, with the above
%identification of $\p G_A\backslash\{\xi_0\}$  and $\R^n$,
%$\p f$  preserves the horizontal foliation.
\end{theorem}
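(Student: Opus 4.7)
The plan is to exploit Theorem~\ref{tyson} (Tyson's quasi-preservation of $Q$-modulus) together with the key observation, noted just before the theorem statement, that a curve in $(\R^n, D)$ is rectifiable if and only if it lies in a single horizontal leaf. Since rectifiability of individual curves is not preserved by quasisymmetric maps in general, the argument must proceed through modulus of curve families rather than through individual curves.

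First, I would construct a family $\Gamma$ of horizontal curves in a small box $U = B_1 \times W \subset \R^{n_1} \times Y$ centered at a point of a fixed horizontal leaf $L = \R^{n_1} \times \{y_0\}$, joining two disjoint ``face sets'' $E_1, E_2 \subset B_1$ inside each fiber $\R^{n_1}\times\{y\}$, $y \in W$. Using the admissible density $\rho = C\mathbf{1}_U$ with $C$ large enough that $\int_\gamma \rho\, ds \ge 1$ for every $\gamma \in \Gamma$ (possible because the $\R^{n_1}$ factor is unsnowflaked in $D$), together with the $Q$-regularity of $\mu$, one verifies $\text{Mod}_Q \Gamma > 0$.

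By Theorem~\ref{tyson}, $\text{Mod}_Q F(\Gamma) > 0$. The key observation is that a family of non-rectifiable curves lying in a bounded set has $Q$-modulus zero: for any $\epsilon > 0$, the density $\epsilon \mathbf{1}_{F(\overline{U})}$ is admissible (integrating to $+\infty$ along each non-rectifiable curve), and its $L^Q$-norm tends to $0$. By subadditivity of modulus, the subfamily $\Gamma_{\mathrm{rect}} = \{\gamma \in \Gamma : F\circ\gamma \text{ is rectifiable}\}$ must satisfy $\text{Mod}_Q F(\Gamma_{\mathrm{rect}}) > 0$, and in particular $\Gamma_{\mathrm{rect}}$ is non-empty. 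For each such $\gamma$, the image $F \circ \gamma$ lies in a single horizontal leaf, so the two endpoints of $\gamma$ are mapped by $F$ to a common horizontal leaf.

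To conclude that $F$ sends each horizontal leaf into a single horizontal leaf, I would argue by density and continuity. Given two points $p, q \in L$, I would arrange the box $U$ and the face sets so that the resulting rectifiable subfamily supplies a sequence of horizontal curves $\gamma_n$ with endpoints converging to $p$ and $q$ respectively. Since the endpoints of each $F \circ \gamma_n$ share a common $Y$-coordinate and $F$ is continuous, $F(p)$ and $F(q)$ share a common $Y$-coordinate, hence lie on the same horizontal leaf. Applying the same reasoning to $F^{-1}$ then shows that $F$ preserves the horizontal foliation. The main technical hurdle is realizing, for an arbitrary pair $p, q \in L$, a positive-modulus rectifiable subfamily whose curves have endpoints approaching $p$ and $q$; this should follow from the local modulus estimate together with a Fubini-type argument in the product $\R^{n_1}\times Y$.
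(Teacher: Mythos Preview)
Your proposal uses the same two ingredients as the paper --- Tyson's quasi-invariance of $Q$-modulus and the fact that rectifiable curves in $(\R^n,D)$ are exactly the horizontal ones --- but it approaches the conclusion indirectly and leaves open precisely the step you flag as the ``main technical hurdle.'' Knowing that $\Gamma_{\mathrm{rect}}$ is nonempty (even of positive modulus) does not by itself give you curves with endpoints approaching an \emph{arbitrary} prescribed pair $p,q$ on a leaf; you would need to repeat the modulus argument in arbitrarily small boxes around the segment $pq$ and know that $\Gamma_{\mathrm{rect}}$ stays nonempty each time.

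The paper closes this gap by running the argument as a direct contradiction, and the key observation you are missing is a simple compactness/continuity step. Suppose there exist $p,q$ in the same leaf with $F(p),F(q)$ in different leaves. Take $\Gamma$ to be the family of straight segments parallel to $pq$ filling a thin $n$-dimensional cylinder with axis $pq$. The map $(\text{endpoint}_1,\text{endpoint}_2)\mapsto (Y\text{-coordinate of }F(\text{endpoint}_1)) - (Y\text{-coordinate of }F(\text{endpoint}_2))$ is continuous and nonzero at the central segment, so for a sufficiently thin cylinder it is nonzero for \emph{every} segment in $\Gamma$. Hence no curve in $F(\Gamma)$ lies in a horizontal leaf, so $F(\Gamma)$ consists entirely of non-locally-rectifiable curves and $\text{Mod}_Q F(\Gamma)=0$, while $\text{Mod}_Q\Gamma>0$ --- contradicting Theorem~\ref{tyson}. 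This single compactness observation replaces your density/limiting argument entirely, and in fact it is exactly what you would need to show that your $\Gamma_{\mathrm{rect}}$ is nonempty in every small cylinder around $pq$.
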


\begin{proof}
    Suppose $F$ does not preserve the horizontal foliation.  Then there
are two points $p$ and $q$ in some $\R^{n_1}\times\{y\}$ such that
$f(p)$ and $f(q)$ are not in the same horizontal leaf.
 Let $\gamma$ be the Euclidean line segment from $p$ to $q$
and $\Gamma$  be the family of straight segments parallel to
$\gamma$ in $\R^n$ whose union is an $n$-dimensional circular
cylinder with $\gamma$ as the central axis. The curves in $\Gamma$
are rectifiable with respect to the metric $D$. Since $f$ is a
homeomorphism, by choosing the radius of the circular cylinder to be
sufficiently small (by a compactness argument) we may assume that no
curve in $\Gamma$ is mapped into a horizontal leaf. It follows that
$f(\Gamma)$ has no locally rectifiable curve and so ${\text{Mod}}_Q
f(\Gamma)=0$. On the other hand, \cite{Va},~7.2 (page~21) shows that
${\text{Mod}}_Q \Gamma>0$ (the Euclidean length element on each
$\beta\in\Gamma$ is the same as the length element on $\beta$
obtained from the metric $D$). Since $Q=\Sigma_{i=1}^r
n_i\frac{\alpha_i}{\alpha_1}>1$, this contradicts Theorem~\ref{tyson}.
Hence each horizontal leaf is mapped to a horizontal leaf.
\end{proof}

\section{Quasisymmetry implies Bilipschitz}\label{bilip}

In this section we show that each self quasisymmetry of $(\R^n,D)$
is actually a bilipschitz map. One should contrast this with the
case of  Euclidean spaces and Heisenberg groups, where there are
non-bilipschitz quasisymmetric maps  (\cite{GV}, \cite{B}). On the
other hand, $(\R^n,D)$ is not as rigid as the ideal boundary of a
quaternionic hyperbolic space or a Cayley plane (\cite{P2}) or a
Fuchsian building (\cite{BP}, \cite{X}), where each self
quasisymmetry is a conformal map.

Let $K\ge 1$ and $C>0$. A bijection $F:X_1\ra X_2$ between two metric spaces is
called a $K$-quasisimilarity (with constant $C$) if
\[
   \frac{C}{K}\, d(x,y)\le d(F(x), F(y))\le C\,K\, d(x,y)
\]
for all $x,y \in X_1$.
It is clear that a map is a quasisimilarity if and only if it is a
bilipschitz map. The point of using the notion of
quasisimilarity is that sometimes there is control on $K$ but not on $C$.

\begin{theorem}\label{main}
Let $F:(\R^n, D)\ra (\R^n, D)$ be an $\eta$-quasisymmetry. Then $F$
is a $K$-quasisimilarity with $K=(\eta(1)/\eta^{-1}(1))^{2r+2}$.
%depends only on $\eta$ and the $\alpha_i$'s.
\end{theorem}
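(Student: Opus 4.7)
The plan is to show that $F$ is $K$-quasisimilar with $K = Q^{2r+2}$, where $Q = \eta(1)/\eta^{-1}(1) \geq 1$, by bounding the oscillation of the distortion ratio $\rho(x, y) := D(F(x), F(y))/D(x, y)$ over all distinct pairs. The two basic tools are dual forms of QS: for any triple $(x, y, z)$ with $D(x, y) = D(x, z)$, the axiom gives $D(F(x), F(y))/D(F(x), F(z)) \in [\eta(1)^{-1}, \eta(1)]$; dually, if $D(F(x), F(y)) = D(F(x), F(z))$, then applying QS to $F^{-1}$ gives $D(x, y)/D(x, z) \in [\eta^{-1}(1), \eta^{-1}(1)^{-1}]$. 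Each of these is a ``sphere distortion'' estimate; suitably chained, they will control the variation of $\rho$.

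First, I invoke Theorem~\ref{fixed}: $F$ preserves the horizontal foliation, so $F(x_1, y) = (F_1(x_1, y), F_2(y))$ for $x_1 \in \R^{n_1}$ and $y \in Y = \R^{n_2} \times \cdots \times \R^{n_r}$. I will also exploit the similarity group of $(\R^n, D)$: translations are $D$-isometries, and the dilations $\delta_t(x_1, \ldots, x_r) = (tx_1, t^{\alpha_2/\alpha_1}x_2, \ldots, t^{\alpha_r/\alpha_1}x_r)$ scale $D$ by the factor $t$. This group acts transitively on pairs of points at each fixed scale, and lets me reduce to comparing $\rho$ only at pairs in a ``standard'' configuration.

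The core step is a chain-of-comparisons argument. Given two arbitrary ordered pairs at $D$-distance $s$, I construct a chain of intermediate pairs in which consecutive pairs share an endpoint, and at each link either the input or the output $D$-distance is preserved; each such link contributes a factor bounded by a power of $Q$ to the change in $\rho$ (one factor of $\eta(1)$ from direct QS and one of $\eta^{-1}(1)^{-1}$ from inverse QS, combining to $Q$ in the worst case). Using the product structure $\R^n = \R^{n_1} \times \cdots \times \R^{n_r}$, I route the chain through ``coordinate-aligned'' pairs --- those whose endpoint difference lies in a single factor $\R^{n_i}$ --- and each coordinate change can be effected by a bounded number of exchange moves. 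A careful count involving the $r$ coordinate directions, plus constantly many additional moves to normalize scale and base point, yields oscillation of $\rho$ bounded by $Q^{2(2r+2)}$, and hence the claimed $K = Q^{2r+2}$.

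The main obstacle is the combinatorial bookkeeping: ensuring that each exchange move is legitimate (i.e., the replaced endpoint genuinely exists at the required $D$-distance, which is nontrivial given the ``rectangular'' shape of $D$-spheres arising from the snowflaked product metric), and that the chain length is bounded correctly to give exactly the claimed constant. The foliation-preserving property from Theorem~\ref{fixed} is essential here: it forces $F$ to act coherently on ``vertical'' transverse directions to leaves and constrains the image of coordinate-aligned pairs, preventing extra distortion from accumulating. The translation-invariance of $D$ together with the dilation symmetry $\delta_t$ reduces the argument to a finite combinatorial core whose size is governed by the number of factors $r$.
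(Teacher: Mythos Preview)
Your chain-of-comparisons sketch has a genuine gap at the point where you pass between different scales. The exchange moves you describe --- replacing one endpoint so that either the source $D$-distance or the image $D$-distance is held fixed --- bound the ratio $\rho(x,y)/\rho(x',y')$ only when $(x,y)$ and $(x',y')$ are connected by a chain of \emph{bounded} length. For pairs at a common scale $s$ one can imagine such a chain using the product structure, but to go from scale $s$ to scale $s'$ no bound independent of $s'/s$ is available: a type-A move keeps the source distance equal to $s$, while a type-B move changes it by an uncontrolled factor in $[\eta^{-1}(1),1/\eta^{-1}(1)]$. Your appeal to the dilation group $\delta_t$ does not close this gap. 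Conjugating $F$ by $\delta_t$ produces another $\eta$-quasisymmetry, but it does not relate the values of $\rho$ for the \emph{original} $F$ at scales $s$ and $ts$; the relation $\rho(\delta_t x,\delta_t y)\approx\rho(x,y)$ is precisely the quasisimilarity you are trying to prove. Note too that a purely combinatorial argument of this kind must fail in general: on a single factor $(\R^{n_1},|\cdot|)$ there are $\eta$-quasisymmetries (radial stretch maps, say) that are not bilipschitz, so the theorem genuinely depends on $r\ge 2$ and on the interplay between the Euclidean and the snowflaked directions --- something your chain does not exploit.

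The paper obtains scale-independence through two analytic ingredients absent from your outline. Writing $F(x_1,y)=(H(x_1,y),G(y))$ as you do, it first shows (Lemma~\ref{l1}) that the pointwise dilatations of $G$ at $y$ and of the leaf map $H(\cdot,y)$ at every $x$ are comparable. For the base case $r=2$, the transverse map $G$ is, after undoing the snowflake, a Euclidean quasisymmetry and hence a.e.\ differentiable; this yields $0<l_G(y)\le L_G(y)<\infty$ a.e.\ (Lemma~\ref{verticaldila}). Because $\R^{n_1}$ is geodesic, the resulting pointwise bounds on $l_{H(\cdot,y)}$ and $L_{H(\cdot,y)}$ integrate to a global bilipschitz bound on each leaf (Lemma~\ref{l2}), a separate argument (Lemma~\ref{l3}) makes the constant uniform in $y$, and Lemma~\ref{l4} reassembles $H$ and $G$ into a quasisimilarity $F$. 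The general case is then handled by induction on $r$. Theorem~\ref{fixed} is indeed used, but to set up this analytic decomposition rather than to drive a combinatorial chain.
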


In this section, we first develop some intermediate results, and then use
these results to provide a proof of this theorem.
We first recall some definitions.

Let $g: X_1\ra X_2$ be a homeomorphism between two  metric spaces.
We define for every $x\in X_1$  and $r>0$,
\begin{align*}
   L_g(x,r)&=\sup\{d(g(x), g(x')):   d(x,x')\le r\},\\
   l_g(x,r)&=\inf\{d(g(x), g(x')):   d(x,x')\ge r\}.
\end{align*}
Notice that if $X_1$ is
  connected and $X_1\setminus B(x,r)$ is non-empty, then
  $l_g(x,r)\le L_g(x,r)$.   In this paper, we only consider
  connected metric spaces.
  Set
\[
   L_g(x)=\limsup_{r\ra 0}\frac{L_g(x,r)}{r}, \ \ \
   l_g(x)=\liminf_{r\ra 0}\frac{l_g(x,r)}{r}.
\]
  Then
\[
  L_{g^{-1}}(g(x))=\frac{1}{l_g(x)}\ \ \text{ and }\ \ l_{g^{-1}}(g(x))=\frac{1}{L_g(x)}
\]
for any $x\in X_1$. If $g$ is an $\eta$-quasisymmetry, then $L_g(x,r)\le \eta(1)l_g(x, r)$
for all $x\in X_1$ and $r>0$. Hence if in addition
\[
    \lim_{r\ra 0}\frac{L_g(x,r)}{r}\ \ {\text{or}} \ \ \lim_{r\ra 0}\frac{l_g(x,r)}{r}
\]
exists, then
\[
    0\le l_g(x)\le L_g(x)\le \eta(1) l_g(x)\le \infty.
\]

Recall the decomposition $\R^n=\R^{n_1}\times Y$. Given points $y=(x_2,\cdots, x_r)$
and $y'=(x_2', \cdots, x_r')\in Y$ with $x_i,x_i'\in\R^{n_i}$, set
\[
   D_Y(y,y')=\max\{|x_2-x_2'|^{\frac{\alpha_1}{\alpha_2}},
               |x_3-x_3'|^{\frac{\alpha_1}{\alpha_3}}, \cdots,
                      |x_r-x_r'|^{\frac{\alpha_1}{\alpha_r}}\}.
\]
%$\tilde D_Y(p,p')=(D_Y(p,p'))^{{\alpha_2}}$ and $D'_Y(p,p')=(D_Y(p,p'))^{{\alpha_1}}$.
For $p=(x_1,y), p'=(x_1',y')\in \R^{n_1}\times Y$, we have
$D(p,p')=\max\{|x_1-x_1'|, D_Y(y,y')\}$.
%and $\tilde D(x, x')=\max\{|x_1-x_1'|,  D'_Y(p,p')\}$.
We notice that for every $y_1, y_2\in Y$, the Hausdorff distance in the metric $D$
of the two horizontal leaves,
\begin{equation}\label{eq:1}
  HD(\R^{n_1}\times \{y_1\}, \R^{n_1}\times \{y_2\})=D_Y(y_1,  y_2).
\end{equation} %=(d_Y(y_1, y_2))^{\frac{\alpha_2}{\alpha_1}}.$$
Also, for any $p=(x_1, y_1)\in \R^{n_1}\times Y$ and any $y_2\in Y$,
\begin{equation}\label{eq:2}
   D(p, \R^{n_1}\times \{y_2\})=D_Y(y_1,y_2).
\end{equation}

By Theorem \ref{fixed} the quasisymmetry $F$ preserves the
horizontal foliation. Hence it induces a map $G: Y\ra Y$  such that
for any $y\in Y$, $F(\R^{n_1}\times \{y\})=\R^{n_1}\times \{G(y)\}$.
 %that is, $G$ is the composition of $F$ with the projection onto the
  %$Y$-component.
For each $y\in Y$, let $H(\cdot,y):\R^{n_1}\ra
\R^{n_1}$  be the map  such that $F(x,y)=(H(x,y),G(y))$ for all
$x\in \R^{n_1}$. Because $F:(\R^n,D)\to(\R^n,D)$ is an
$\eta$-quasisymmetry,
%where $\eta_1(t)=(\eta(t^{1/{\alpha_1}}))^{\alpha_1}$.
it follows that for each fixed $y\in Y$, the map $H(\cdot, y):\R^{n_1}\ra \R^{n_1}$ is an
$\eta$-quasisymmetry with respect to the Euclidean metric on $\R^{n_1}$.
The following lemma together with equations~(\ref{eq:1}) and~(\ref{eq:2})
imply that $G:(Y, D_Y)\ra (Y, D_Y)$ is also an
$\eta$-quasisymmetry.

\begin{Le}\label{tyson2} \e{(\cite[Lemma 15.9]{T2})}
Let $g: X_1\ra X_2$  be an $\eta$-quasisymmetry and $A,B, C\subset
X_1$.  If  $HD(A,B)\le t\, HD(A, C)$ for some $t\ge 0$, then
there is some $a\in A$ such that
\[
   HD(g(A), g(B))\le \eta(t) d(g(a), g(C)).
\]
\end{Le}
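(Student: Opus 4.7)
The plan is to find, via the $\eta$-quasisymmetry of $g$, a triple $(a,b,c)\in A\times B\times C$ for which the quasisymmetric inequality directly bounds $d(g(a),g(b))$ by $\eta(t)\,d(g(a),g(C))$, and then convert this estimate into a bound on $HD(g(A),g(B))$ by arranging that $d(g(a),g(b))$ approximates $HD(g(A),g(B))$.

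First, for any $\epsilon>0$ I would select $a\in A$ and $b\in B$ with $d(g(a),g(b))\ge HD(g(A),g(B))-\epsilon$ and $d(a,b)\le HD(A,B)+\epsilon$. This is done by case analysis on which of the two suprema in the definition of $HD(g(A),g(B))$ is (approximately) achieved: if $HD(g(A),g(B))=\sup_{a'\in A}d(g(a'),g(B))$, pick $a\in A$ approximately realizing this supremum, then $b\in B$ with $d(a,b)\le d(a,B)+\epsilon\le HD(A,B)+\epsilon$; the case when the supremum is achieved on the $B$-side is symmetric. Combined with the hypothesis, $d(a,b)\le t\,HD(A,C)+\epsilon$.

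Next I would apply the $\eta$-quasisymmetry of $g$ with an arbitrary $c\in C$,
\[
  d(g(a),g(b))\le\eta\!\left(\frac{d(a,b)}{d(a,c)}\right)d(g(a),g(c)),
\]
and use monotonicity of $\eta$ together with optimization over $c\in C$ to promote this to
\[
  d(g(a),g(b))\le\eta\!\left(\frac{d(a,b)}{d(a,C)}\right)d(g(a),g(C)).
\]
If $a$ is additionally chosen so that $d(a,C)\ge HD(A,C)-\epsilon$, then $d(a,b)/d(a,C)\to t$ as $\epsilon\to 0$, and continuity of $\eta$ yields $HD(g(A),g(B))\le\eta(t)\,d(g(a),g(C))$ in the limit.

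The main obstacle is reconciling the two simultaneous requirements on $a$: it must approximately realize the supremum defining $HD(g(A),g(B))$ (a condition on $g(a)$) while also having $d(a,C)$ close to $HD(A,C)$ (a condition on $a$ itself). These two conditions need not be jointly attainable in full generality, so a careful case analysis, together with an appeal to the quasisymmetry of $g^{-1}$ when $HD(A,C)$ is realized on the $C$-side rather than the $A$-side, is required. I note that in the intended application to parallel horizontal leaves $A,B,C$ in $(\R^n,D)$ in Section~\ref{bilip}, equation~(\ref{eq:2}) gives $d(a,C)=D_Y(y_A,y_C)=HD(A,C)$ for every $a\in A$, so the tension disappears and the argument specializes cleanly.
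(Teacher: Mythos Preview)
The paper does not supply a proof of this lemma; it is simply quoted from \cite[Lemma~15.9]{T2}, so there is no in-paper argument to compare your proposal against.

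That said, the obstacle you flag is genuine, and your suggested patch via $g^{-1}$ cannot rescue the statement as literally written here: take $X_1=X_2=\R$, $g=\mathrm{id}$ (so $\eta(t)=t$ works), $A=\{0\}$, $B=\{1\}$, $C=[0,100]$. Then $HD(A,B)=1\le \tfrac{1}{100}\,HD(A,C)$, but the only $a\in A$ has $d(g(a),g(C))=0$, so the conclusion $1\le \tfrac{1}{100}\cdot 0$ fails. Thus some extra hypothesis (present in Tyson's original context, and automatic for the horizontal leaves in this paper) is implicitly in force; your ``careful case analysis'' cannot close the gap in this generality.

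Your final observation is exactly the point: in the paper's sole use of the lemma, $A,B,C$ are horizontal leaves, and equation~(\ref{eq:2}) gives $d(a,C)=HD(A,C)$ for every $a\in A$. Under that standing hypothesis your argument is clean and complete: choose $a\in A$, $b\in B$ as in your first paragraph, pick $c\in C$ with $d(g(a),g(c))\le d(g(a),g(C))+\epsilon$, and apply the quasisymmetry inequality; since $d(a,c)\ge d(a,C)=HD(A,C)$ one gets $d(a,b)/d(a,c)\le t+\epsilon/HD(A,C)$, and letting $\epsilon\to 0$ gives the desired bound.
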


 We recall that if $g: X_1\ra X_2$ is an $\eta$-quasisymmetry, then
$g^{-1}:X_2\ra X_1$ is an $\eta_2$-quasisymmetry, where
$\eta_2(t)=(\eta^{-1}(t^{-1}))^{-1}$, see~\cite[Theorem~6.3]{V2}.
Note that $\eta_2(1)=1/\eta^{-1}(1)$ and $\eta_2^{-1}(1)=1/\eta(1)$.

 In the proofs of the following lemmas, the
quantities $l_G, L_G, l_{G^{-1}}, L_{G^{-1}}$ are defined with
respect to the metric $D_Y$. Similarly, %the quantities
$l_{H(\cdot,y)}$, $L_{H(\cdot,y)}$, $l_{I_y}$ and $L_{I_y}$ are
defined with respect to the Euclidean metric on $\R^{n_1}$, where
$I_y:=H(\cdot,y)^{-1}: \R^{n_1}\ra \R^{n_1}$. Lemmas~\ref{l3} and
\ref{l4}   together verify Theorem~\ref{main} for the case $r=2$. At
the end of this section we will use induction to then complete the
proof of Theorem~\ref{main} for the general case $r\ge 2$.

\begin{Le}\label{l1}
The following holds for all $y\in Y$ and $x\in \R^{n_1}$:
\newline
(1)  $L_G(y,r)\le\eta(1)\, l_{H(\cdot,y)}(x,r)$ for $r>0$;
\newline
(2)  $\eta^{-1}(1)\, l_{H(\cdot, y)}(x)\le l_G(y)\le \eta(1)\, l_{H(\cdot,y)}(x)$;
\newline
(3)  $\eta^{-1}(1)\, L_{H(\cdot, y)}(x)\le L_G(y)\le \eta(1)\, L_{H(\cdot,y)}(x)$.
% $L_G(y)\ge  \eta^{-1}(1) L_{H(\cdot, y)}(x)$.
\end{Le}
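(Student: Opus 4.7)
The plan is to establish (1) directly from a three-point $\eta$-quasisymmetry estimate, and then to deduce (2) and (3) by dividing by $r$ and passing to $\liminf$/$\limsup$, using (1) applied both to $F$ and to $F^{-1}$.

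For (1), I fix $y' \in Y$ with $D_Y(y,y') \le r$ and $x' \in \R^{n_1}$ with $|x - x'| \ge r$, and I consider the three points $p = (x,y)$, $q = (x,y')$, $p' = (x',y)$ in $(\R^n,D)$. From the explicit formula $D((u,v),(u',v')) = \max\{|u - u'|, D_Y(v,v')\}$, I get $D(p,q) = D_Y(y,y') \le r \le |x - x'| = D(p,p')$, so the $\eta$-quasisymmetry of $F$ gives $D(F(p),F(q)) \le \eta(1)\, D(F(p),F(p'))$. Since $F$ preserves horizontal leaves by Theorem~\ref{fixed}, we have $F(p),F(p') \in \R^{n_1} \times \{G(y)\}$ while $F(q) \in \R^{n_1} \times \{G(y')\}$; together with (\ref{eq:2}) this yields $D(F(p),F(q)) \ge D_Y(G(y),G(y'))$ and $D(F(p),F(p')) = |H(x,y) - H(x',y)|$. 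Combining, $D_Y(G(y),G(y')) \le \eta(1)\, |H(x,y) - H(x',y)|$; taking the sup over admissible $y'$ and the inf over admissible $x'$ produces (1).

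For the upper bounds in (2) and (3), I divide (1) by $r$. Since $l_G(y,r) \le L_G(y,r)$ on a connected space, passing to $\liminf_{r \to 0}$ gives $l_G(y) \le \eta(1)\, l_{H(\cdot,y)}(x)$, which is the upper bound in (2). Passing to $\limsup_{r \to 0}$ and using $l_{H(\cdot,y)}(x,r) \le L_{H(\cdot,y)}(x,r)$ on the right yields $L_G(y) \le \eta(1)\, L_{H(\cdot,y)}(x)$, the upper bound in (3).

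For the lower bounds, I apply the same three-point argument to $F^{-1}$, which is $\eta_2$-quasisymmetric with $\eta_2(1) = 1/\eta^{-1}(1)$. Since $F^{-1}$ is a homeomorphism that also permutes the horizontal leaves, its induced maps on $Y$ and on the leaf $\R^{n_1} \times \{G(y)\}$ are $G^{-1}$ and $I_y = H(\cdot,y)^{-1}$, respectively; so the analog of (1) based at $(H(x,y),G(y))$ reads $L_{G^{-1}}(G(y),r) \le \eta_2(1)\, l_{I_y}(H(x,y),r)$. The same $\liminf$/$\limsup$ manipulation then gives $l_{G^{-1}}(G(y)) \le \eta_2(1)\, l_{I_y}(H(x,y))$ and $L_{G^{-1}}(G(y)) \le \eta_2(1)\, L_{I_y}(H(x,y))$. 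Substituting the identities $L_{g^{-1}}(g(x)) = 1/l_g(x)$ and $l_{g^{-1}}(g(x)) = 1/L_g(x)$ from the excerpt, and recalling $\eta_2(1) = 1/\eta^{-1}(1)$, these translate to $l_G(y) \ge \eta^{-1}(1)\, l_{H(\cdot,y)}(x)$ and $L_G(y) \ge \eta^{-1}(1)\, L_{H(\cdot,y)}(x)$, finishing (2) and (3). The only real subtlety is the bookkeeping in identifying $F^{-1}$'s horizontal component with $I_y$ and in correctly pairing $\limsup$/$\liminf$ with the $L/l$ identities under inversion; once that is in place, the limit arguments are purely mechanical.
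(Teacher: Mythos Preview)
Your proof is correct and follows essentially the same approach as the paper: the same three-point quasisymmetry estimate yields (1), the upper bounds in (2) and (3) come from dividing by $r$ and taking $\liminf$/$\limsup$, and the lower bounds follow by applying the upper bounds to $F^{-1}$ together with the identities $L_{g^{-1}}(g(x))=1/l_g(x)$, $l_{g^{-1}}(g(x))=1/L_g(x)$ and $\eta_2(1)=1/\eta^{-1}(1)$.
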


\begin{proof}
To prove (1), let $y\in Y$, $x\in \R^{n_1}$ and $r>0$. Let $y'\in Y$
be an arbitrary point with $D_Y(y,y')\le r$ and $x'\in \R^{n_1}$ an
arbitrary point with $|x-x'|\ge r$. Then $D((x,y),(x,y'))\le r\le
D((x,y),(x',y))$. Since $F$ is $\eta$-quasisymmetric, we have
\begin{align*}
  D_Y(G(y), G(y'))\le D(F(x,y), F(x,y'))&\le \eta(1)\, D(F(x,y), F(x',y))\\
                                        &=\eta(1)\,|H(x,y)-H(x', y)|.
\end{align*}
Since $y'$ and $x'$ are chosen arbitrarily and are independent of
each other, the inequality follows.

Next we prove~(2) and~(3).
 Since $Y$ is connected, we have  $l_G(y, r)\le L_G(y, r)$.  Now
  the second inequality of~(2) follows  from~(1).  % and the fact
   %that $l_G(y, r)\le L_G(y, r)$, while
  Similarly the second inequality of~(3)
follows from~(1) and the fact that $l_{H(\cdot,y)}(x,r)\le
L_{H(\cdot,y)}(x,r)$.

%Next we prove~(2) and~(3). It follows from  $l_G(y, r)\le L_G(y, r)$,
%$l_{H(\cdot,y)}(x,r)\le  L_{H(\cdot,y)}(x,r)$ and (1)
%that $L_G(y,r)\le\eta(1)\, L_{H(\cdot,y)}(x,r)$
%and $l_G(y,r)\le\eta(1)\, l_{H(\cdot,y)}(x,r)$  for any $r>0$.
%Hence $L_G(y)\le \eta(1)\, L_{H(\cdot,y)}(x)$
%and $l_G(y)\le \eta(1)\, l_{H(\cdot,y)}(x)$.
To prove the first inequalities in~(2) and~(3), observe
that the inverse map $F^{-1}:(\R^n,D)\ra (\R^n,D)$
is an $\eta_2$-quasisymmetry, %. , where $\eta_2(t)=(\eta^{-1}(t^{-1}))^{-1}$.
with
\[
  F^{-1}(x,y)=(H(\cdot,G^{-1}(y))^{-1}(x),G^{-1}(y))=(I_{G^{-1}(y)}(x),G^{-1}(y)).
\]
Applying the second inequality of~(2) proven above %$l_G(y)\le \eta(1)\, l_{H(\cdot,y)}(x)$
to $I_y$ and $G^{-1}$, we obtain:
\[
  \frac{1}{L_G(y)}=l_{G^{-1}}(G(y))\le \eta_2(1)\cdot  l_{I_{y}}(H(x,y))
      =\frac{1}{\eta^{-1}(1)} \cdot\frac{1}{L_{H(\cdot,y )}(x)},
\]
hence $L_G(y)\ge \eta^{-1}(1) L_{H(\cdot,y)}(x)$, which is the first inequality of~(3).
Similarly, using the second inequality of~(3) we
obtain the first inequality of~(2). %$l_G(y)\ge  \eta^{-1}(1) l_{H(\cdot, y)}(x)$.
%$L_{H(\cdot, y)}(x)\le  L_G(y)/{\eta^{-1}(1)}$.
\end{proof}

When $r=2$, we have $Y=\R^{n_2}$  and
$D_Y=|\cdot|^{\frac{\alpha_1}{\alpha_2}}$.

\begin{Le}\label{verticaldila}
Assume that $r=2$. Then $0<l_G(y)\le L_G(y)\le \eta(1)
l_G(y)<\infty$ for a.e. $y\in Y$ with respect to the Lebesgue
measure on $Y=\R^{n_2}$.
\end{Le}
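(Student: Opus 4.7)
The plan is to reduce the question to classical differentiability of Euclidean quasisymmetric maps via the snowflake structure of $D_Y$, then deduce the three-term inequality by passing to the limit in the quasisymmetry bound. By Lemma~\ref{tyson2} (applied to the horizontal foliation) together with equations~(\ref{eq:1}) and~(\ref{eq:2}), the induced map $G:(Y,D_Y)\to(Y,D_Y)$ is $\eta$-quasisymmetric. When $r=2$, $Y=\R^{n_2}$ and $D_Y(y,y')=|y-y'|^{\alpha_1/\alpha_2}$ is a snowflake of the Euclidean metric, so a direct check shows that the same map $G$ is also quasisymmetric with respect to the Euclidean metric on $\R^{n_2}$ (with a modified control function $\tilde\eta(t)=\eta(t^{\alpha_1/\alpha_2})^{\alpha_2/\alpha_1}$).

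When $n_2\ge 2$, Euclidean quasisymmetries of $\R^{n_2}$ are quasiconformal (by V\"ais\"al\"a), and Gehring's theorem guarantees that $G$ is differentiable at almost every $y$ with nonsingular differential. At such a $y$, both of the limits $\lim_{r\to 0}L^e(y,r)/r$ and $\lim_{r\to 0}l^e(y,r)/r$, where $L^e$ and $l^e$ denote the analogues of $L_G$ and $l_G$ computed in the Euclidean metric, exist and are finite and positive. A change-of-variable $s=r^{\alpha_2/\alpha_1}$, using $D_Y=|\cdot|^{\alpha_1/\alpha_2}$, translates these into genuine limits with respect to $D_Y$ and shows that $L_G(y)$ and $l_G(y)$ are precisely the $\alpha_1/\alpha_2$-powers of their Euclidean counterparts; in particular $0<l_G(y)\le L_G(y)<\infty$ for almost every $y$.

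Finally, since $G:(Y,D_Y)\to(Y,D_Y)$ is $\eta$-quasisymmetric, we have $L_G(y,r)\le \eta(1)\,l_G(y,r)$ for all $y$ and $r>0$; dividing by $r$ and passing to the limit (which exists almost everywhere by the previous step) yields $L_G(y)\le \eta(1)\,l_G(y)$, completing the chain. The main obstacle would be the low-dimensional case $n_2=1$, where a Euclidean quasisymmetry of $\R$ can be induced by a purely singular doubling measure and hence have zero derivative on a set of positive Lebesgue measure; to treat this case I would exploit the additional rigidity of $F$ by combining Lemma~\ref{l1} with the Euclidean differentiability of the horizontal maps $H(\cdot,y):\R^{n_1}\to\R^{n_1}$ (valid when $n_1\ge 2$), transferring positivity and finiteness of the infinitesimal bounds from the horizontal maps to $G$.
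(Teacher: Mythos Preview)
Your argument for $n_2\ge 2$ is correct and in fact coincides with the paper's opening moves: pass to the Euclidean metric via the snowflake relation, use a.e.\ differentiability of $G$ to obtain existence of the limits $\lim_{r\to 0}L_G(y,r)/r$ and $\lim_{r\to 0}l_G(y,r)/r$, and then let $r\to 0$ in $L_G(y,r)\le\eta(1)\,l_G(y,r)$. The difference is in how you establish $0<l_G(y)$ and $L_G(y)<\infty$. You invoke Gehring's nonsingularity of the differential (which needs $n_2\ge 2$), and for $n_2=1$ you propose to transfer positivity and finiteness from $H(\cdot,y)$ via Lemma~\ref{l1}, again using Gehring on $\R^{n_1}$---which forces $n_1\ge 2$. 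This leaves the case $n_1=n_2=1$ (i.e.\ $G_A$ three-dimensional) completely open, and your worry about singular quasisymmetries of $\R$ applies equally to $H(\cdot,y)$ in that situation.

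The paper avoids all dimension hypotheses by a much simpler contradiction argument that you are close to but do not quite reach. Once the limits exist and $L_G(y)\le\eta(1)\,l_G(y)$, suppose $L_G(y)=\infty$; then $l_G(y)=\infty$, and Lemma~\ref{l1}(2) forces $l_{H(\cdot,y)}(x)=\infty$ for \emph{every} $x\in\R^{n_1}$, hence $L_{I_y}\equiv 0$ for the inverse $I_y=H(\cdot,y)^{-1}$, so $I_y$ is constant---contradicting that it is a homeomorphism. Similarly $L_G(y)=0$ would make $H(\cdot,y)$ itself constant via Lemma~\ref{l1}(3). This uses only that a map with vanishing upper pointwise Lipschitz constant everywhere on a connected space is constant, and needs no differentiability or nonsingularity of the horizontal maps. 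Replacing your Gehring appeals with this argument closes the gap and handles all $n_1,n_2\ge 1$ uniformly.
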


\begin{proof}  Observe in this case that
$D_Y(y,y')=|y-y'|^{\alpha_1/\alpha_2}$ for $y,y'\in Y=\R^{n_2}$.
Because $G$ is an $\eta$-quasisymmetry with respect to the metric
$D_Y$, it is $\eta_1$-quasisymmetric with respect to the Euclidean
metric, where
$\eta_1(t)=(\eta(t^{\alpha_1/\alpha_2}))^{\alpha_2/\alpha_1}$. Hence
the map $G:(\R^{n_2},|\cdot|)\ra (\R^{n_2},|\cdot|)$ is
differentiable a.e. with respect to the Lebesgue measure. With
$L_G^e,l_G^e$ the distortion quantities of the map $G$ with respect
to the Euclidean metric, the differentiability property of $G$ shows
that $\lim_{r\to0}\frac{L_G^e(y,r)}{r}$ and
$\lim_{r\to0}\frac{l_G^e(y,r)}{r}$ exist. Since
$L_G(y,r)=L_G^e(y,r^{\alpha_2/\alpha_1})^{\alpha_1/\alpha_2}$ and
$l_G(y,r)=l_G^e(y,r^{\alpha_2/\alpha_1})^{\alpha_1/\alpha_2}$, this
implies that both $\lim_{r\ra 0}\frac{L_G(y,r)}{r}$ and $\lim_{r\ra
0}\frac{l_G(y,r)}{r}$ exist for a.e. $y\in Y$. It follows that
\[
  0\le l_G(y)\le L_G(y)\le\eta(1) l_G(y)\le\infty.
\]

Fix $y\in Y$ such that both $\lim_{r\ra 0}\frac{L_G(y,r)}{r}$  and
$\lim_{r\ra 0}\frac{l_G(y,r)}{r}$
exist. We next prove that $L_G(y)\not=0,\infty$. %We will first prove
%$L_G(y)\not=\infty$ by contradiction.
Suppose that $L_G(y)=\infty$. Then $l_G(y)=\infty$ and so by
Lemma~\ref{l1}~(2),
%Since $l_F(p, r)=\min\{l_G(y), l_{H(\cdot,y)}(x, r)\}$ for all $x\in
%\R^{n_1}$, we have
$l_{H(\cdot,y)}(x)=\infty$ for all $x\in \R^{n_1}$. Hence
%the map
$I_y=H(\cdot, y)^{-1}:\R^{n_1}\ra \R^{n_1}$ has the property that
$L_{I_y}(x)=0$ for all $x\in \R^{n_1}$. This implies that $I_y$ is a
constant map, contradicting the fact that it is a homeomorphism.
Similarly we use Lemma~\ref{l1}~(3) to show that $L_G(y)\not=0$.
\end{proof}

In the next two lemmas we use  the fact that $\eta(1)\ge 1$ and
$0<\eta^{-1}(1)\le 1$.

\begin{Le}\label{l2}
Suppose that $r=2$.
%For each fixed $y\in Y$ and every $x\in \R^{n_1}$, we have
%$l_{H(\cdot, y)}(x)\ge  l_G(y)/{\eta(1)}$ and
%$L_{H(\cdot,y)}(x)\le \eta_2(1) L_G(y)\le \eta_2(1)\eta(1) l_G(y)$, where
%$\eta_2(t)=(\eta^{-1}(t^{-1}))^{-1}$. Furthermore, f
Then, for  a.e.   $y\in Y$, the map $H(\cdot,
y):\R^{n_1}\ra\R^{n_1}$ is an $\eta(1)/\eta^{-1}(1)$-quasisimilarity
with constant $l_G(y)>0$.
\end{Le}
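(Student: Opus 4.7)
The plan is to upgrade the pointwise infinitesimal distortion bounds of Lemma~\ref{l1} to a genuine bilipschitz inequality on $h:=H(\cdot,y)$. Fix any $y$ in the full-measure subset of $Y=\R^{n_2}$ provided by Lemma~\ref{verticaldila}, so that $0<l_G(y)\le L_G(y)\le \eta(1)l_G(y)<\infty$, and set $K=\eta(1)/\eta^{-1}(1)$. Feeding these estimates into Lemma~\ref{l1}(2)--(3) gives, uniformly in $x\in\R^{n_1}$,
\[
   l_h(x)\ge \frac{l_G(y)}{\eta(1)},\qquad L_h(x)\le \frac{L_G(y)}{\eta^{-1}(1)}\le K\,l_G(y).
\]

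Next I would convert these infinitesimal bounds to global bilipschitz bounds via standard quasiconformal analysis. Since $h$ is an $\eta$-quasisymmetric self-homeomorphism of Euclidean $\R^{n_1}$, it is quasiconformal, hence absolutely continuous on lines and differentiable almost everywhere, with $\|Dh(x)\|=L_h(x)$ and $\|Dh(x)^{-1}\|^{-1}=l_h(x)$ at each point of differentiability. The uniform bounds above therefore yield $\|Dh(x)\|\le K\,l_G(y)$ and $\|Dh(x)^{-1}\|^{-1}\ge l_G(y)/\eta(1)$ for a.e. $x$. Integrating $\|Dh\|$ along a.e. line segment via the ACL property and using continuity of $h$ together with a density argument to reach arbitrary endpoints yields the upper estimate $|h(a)-h(b)|\le K\,l_G(y)\,|a-b|$. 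The same reasoning applied to $h^{-1}$ (also quasisymmetric and hence quasiconformal, with $\|Dh^{-1}\|\le \eta(1)/l_G(y)$ a.e.) produces $|h(a)-h(b)|\ge (l_G(y)/\eta(1))\,|a-b|\ge (l_G(y)/K)\,|a-b|$, where the last inequality uses $\eta^{-1}(1)\le 1$. Together these verify that $h$ is a $K$-quasisimilarity with constant $l_G(y)>0$.

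The main technical point is the ACL/integration step: one must bridge the gap between an a.e. pointwise bound on $\|Dh\|$ and a Lipschitz estimate between arbitrary prescribed endpoints. This relies on the fact that a quasisymmetric map of Euclidean space is quasiconformal and therefore absolutely continuous on lines, permitting integration along generic line segments; the exceptional null set of bad line segments is then handled by continuity of $h$ and density. Everything else --- uniformity of the distortion bounds in $x$, the comparison $L_G(y)\le \eta(1)l_G(y)$, and positivity of $l_G(y)$ --- is already packaged in the previous two lemmas, so once the ACL/integration step is in place the conclusion follows immediately.
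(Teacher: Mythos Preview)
Your argument reaches the right conclusion, but it takes a detour through quasiconformal analysis that the paper avoids entirely, and that detour introduces an unnecessary technical worry.

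The key observation you are not exploiting is that the bounds from Lemma~\ref{l1} hold at \emph{every} point $x\in\R^{n_1}$, not merely almost every point. Once you have $L_h(x)\le K\,l_G(y)$ for all $x$ and $l_h(x)\ge l_G(y)/\eta(1)$ for all $x$, the elementary fact that $\R^{n_1}$ is a geodesic (in fact length) metric space finishes the job: a map whose pointwise upper Lipschitz constant $L_h(x)=\limsup_{r\to 0}L_h(x,r)/r$ is bounded by $M$ at every point of a length space is $M$-Lipschitz (subdivide a segment into pieces of small diameter and use the definition of $L_h$), and the lower bound follows by the dual observation $L_{h^{-1}}(h(x))=1/l_h(x)$. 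This is exactly what the paper does, in one line: ``Because $\R^{n_1}$ is a geodesic space\dots''.

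Your route through differentiability a.e.\ and the ACL property is correct when $n_1\ge 2$, but it is heavier machinery than needed, and it carries a small hazard: when $n_1=1$, quasisymmetric self-maps of $\R$ need \emph{not} be absolutely continuous, so the inference ``quasisymmetric $\Rightarrow$ quasiconformal $\Rightarrow$ ACL'' is not available in that case. Of course the everywhere pointwise bound rescues you anyway, but then you are back to the paper's elementary argument and the QC apparatus was never needed. In short: drop the ACL/integration step and invoke the geodesic-space fact directly; the proof then becomes two lines.
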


\begin{proof}
By Lemma \ref{l1} (2) we have $l_{H(\cdot, y)}(x)\ge
l_G(y)/{\eta(1)}$. Lemma~\ref{l1}~(3) and Lemma~\ref{verticaldila}
imply that,   for a.e. $y\in Y$,  we have  $l_G(y)>0$  and
    $$L_{H(\cdot,y)}(x)\le L_G(y)/\eta^{-1}(1)\le(\eta(1)/\eta^{-1}(1))\,
l_G(y)$$
    for all $x\in \R^{n_1}$.   Because $\R^{n_1}$ is a geodesic
space, for a.e. $y\in Y$ the map $H(\cdot,y)$ is an
$\eta(1)/\eta^{-1}(1)$-quasisimilarity with
constant $l_G(y)$. %A limiting argument together with the fact that
%$(x,y)\mapsto H(x,y)$ is continuous shows that it is true for all $y$.
\end{proof}

\begin{Le}\label{l3}
If $r=2$, then there exists a constant $C>0$ with the following properties:
\begin{enumerate}
\item[(1)]     For each $y\in Y$, $H(\cdot,y)$ is an
$(\eta(1)/\eta^{-1}(1))^4$-quasisimilarity with constant $C$;
\item[(2)]     $G:(Y,D_Y)\ra (Y,D_Y)$ is an
$(\eta(1)/\eta^{-1}(1))^5$-quasisimilarity with constant $C$.
\end{enumerate}
%$\frac{C}{(\eta(1))^6} d_Y(x_1, x_2) \le
%d_Y(H(x_1, y), H(x_2, y))\le C(\eta(1))^4$.}
\end{Le}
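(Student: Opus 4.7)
Set $K_0 := \eta(1)/\eta^{-1}(1)$, so $K_0 \ge 1$ and $\eta(1) \le K_0$ (since $\eta^{-1}(1) \le 1$). The plan is to prove~(1) by showing that the infinitesimal lower dilation $l_G(y)$, defined for a.e.\ $y \in Y$ via Lemmas~\ref{verticaldila} and~\ref{l2}, is essentially constant on $Y$; then~(2) will follow from the derivative bounds of Lemma~\ref{l1}.

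For part~(1), the core estimate I will establish is that for any two points $y_1, y_2$ in the a.e.\ set where Lemmas~\ref{verticaldila} and~\ref{l2} both apply, $l_G(y_1)/l_G(y_2) \le K_0^4$. To prove it I will use a four-point argument: set $\delta := D_Y(y_1, y_2)$, choose $x, x' \in \R^{n_1}$ with $|x - x'| = \delta$, and consider $p_1 = (x, y_1)$, $p_2 = (x', y_1)$, $p_3 = (x, y_2)$, $p_4 = (x', y_2)$, for which every pairwise $D$-distance equals $\delta$. Chaining the $\eta$-quasisymmetric inequality for $F$ through the triples $(p_1, p_2, p_3)$ and $(p_3, p_1, p_4)$ will yield
\[
  \frac{|H(x, y_1) - H(x', y_1)|}{|H(x, y_2) - H(x', y_2)|}
    = \frac{D(F(p_1), F(p_2))}{D(F(p_3), F(p_4))} \le \eta(1)^2,
\]
and combining this with the quasisimilarity bounds of Lemma~\ref{l2} (which sandwich each $|H(x, y_j) - H(x', y_j)|$ between $l_G(y_j) \delta / K_0$ and $l_G(y_j) \delta \, K_0$) together with $\eta(1) \le K_0$ yields the claimed bound on $l_G(y_1)/l_G(y_2)$. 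Once this is in place, I fix $y_0$ in the a.e.\ set and define $C := l_G(y_0)$; then $l_G(y) \in [C/K_0^4, C K_0^4]$ for a.e.\ $y$, and reapplying Lemma~\ref{l2} produces the desired quasisimilarity estimate for $H(\cdot, y)$ with constant $C$. To pass from a.e.\ $y$ to every $y \in Y$ I will use continuity of $F$: for any $y \in Y$, pick $y_n \to y$ in $(Y, D_Y)$ with each $y_n$ in the a.e.\ set, so that $H(\cdot, y_n) \to H(\cdot, y)$ pointwise on $\R^{n_1}$ and the uniform bilipschitz bounds pass to the limit.

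For part~(2), I will feed the uniform bounds $l_{H(\cdot,y)}(x), L_{H(\cdot,y)}(x) \in [C/K_0^4, C K_0^4]$ from part~(1) into Lemma~\ref{l1}(2)~and~(3) to conclude that $l_G(y), L_G(y) \in [C/K_0^5, C K_0^5]$ for every $y \in Y$. Since $r=2$, the space $(Y, D_Y) = (\R^{n_2}, |\cdot|^{\alpha_1/\alpha_2})$ is a snowflake of a Euclidean geodesic space, so the dilation bounds in $D_Y$ translate (via the snowflake identities used in the proof of Lemma~\ref{verticaldila}) into uniform Euclidean dilation bounds on $G$. Integrating along straight segments in $\R^{n_2}$ produces the upper Lipschitz bound for $G$ in the Euclidean metric, while applying the same argument to $G^{-1}$ (whose dilations are reciprocals of those of $G$) gives the lower one. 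Snowflaking back then yields $G$ as a $K_0^5$-quasisimilarity with constant $C$ in $(Y, D_Y)$.

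The hard part will be the bookkeeping of constants: one must ensure that the same $C$ appears in both~(1) and~(2), that the powers of $K_0$ accumulate cleanly at each step of the chaining and of Lemma~\ref{l1}, and that the auxiliary factors of $\eta(1)$ and $\eta^{-1}(1)$ are absorbed into powers of $K_0$ throughout the argument.
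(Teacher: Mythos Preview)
Your overall strategy is exactly the paper's: show that $l_G(y)$ is essentially constant on the a.e.\ set from Lemmas~\ref{verticaldila}--\ref{l2}, deduce the uniform quasisimilarity estimate for $H(\cdot,y)$, pass to all $y$ by continuity, and then feed everything back through Lemma~\ref{l1} to control $G$. Part~(2) of your sketch matches the paper verbatim.

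The difficulty is precisely the one you flagged: the constant bookkeeping. Your four--point chain at common scale $\delta$ uses two applications of the $\eta$-quasisymmetry and gives
\[
   \frac{|H(x,y_1)-H(x',y_1)|}{|H(x,y_2)-H(x',y_2)|}\le \eta(1)^2,
\]
which with Lemma~\ref{l2} yields $l_G(y_1)/l_G(y_2)\le \eta(1)^2 K_0^2\le K_0^4$. Feeding $l_G(y)\in[C/K_0^4, CK_0^4]$ back through Lemma~\ref{l2} (or equivalently Lemma~\ref{l1}) then gives that $H(\cdot,y)$ is a $K_0^5$-quasisimilarity with constant $C$, not $K_0^4$; and part~(2) would come out as $K_0^6$. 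So as written your argument proves a weaker statement than the lemma claims.

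The paper saves the missing factor of $\eta(1)$ by a different comparison. Instead of four points at a fixed common scale, it compares the \emph{diagonal} triple $(x,y_0)$, $(x_0,y)$, $(x,y)$ with $|x-x_0|\ge D_Y(y,y_0)$ chosen large. A single application of $\eta$-quasisymmetry gives
\[
  |H(x,y_0)-H(x_0,y)|\le \eta(1)\,|H(x,y)-H(x_0,y)|,
\]
and the triangle inequality $|H(x,y_0)-H(x_0,y)|\ge |H(x,y_0)-H(x_0,y_0)|-|H(x_0,y_0)-H(x_0,y)|$ together with Lemma~\ref{l2} and the limit $|x-x_0|\to\infty$ (which kills the additive term) yields $l_G(y)\ge C\,\eta^{-1}(1)^2/\eta(1)^3$. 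This is one power of $\eta(1)$ sharper than your bound, and that is exactly what is needed: going back through Lemma~\ref{l1} and Lemma~\ref{verticaldila} then lands on $K_0^4$ for $H(\cdot,y)$ and $K_0^5$ for $G$. If you replace your two-step chain by this diagonal-plus-limit argument, the rest of your proof goes through as stated.
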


\begin{proof}   (1)
      Fix any $y_0\in Y$  that satisfies  both  Lemma \ref{verticaldila}
      and
   Lemma \ref{l2}.  %  and
%      the inequality in Lemma \ref{verticaldila}
           Set $C=l_G(y_0)$. Let $y\in Y$ be an arbitrary point
           satisfying
both  Lemma \ref{verticaldila}
      and
   Lemma \ref{l2}.
             Fix  $x_0\in \R^{n_1}$
and choose $x\in \R^{n_1}$
such that $|x-x_0|\ge D_Y(y,y_0)$. %be a
%point far away from $x_0$.
Then
\[
   D((x,y_0),(x_0,y))=D((x,y),(x_0,y))=|x-x_0|.
\]
By choosing $x$ so that in addition $|H(x,y_0)-H(x_0,y)|>D_Y(G(y_0), G(y))$,
by the $\eta$-quasisymmetry of $F$ we have
\begin{align*}
|H(x, y_0)-H(x_0, y)| & =D(F(x, y_0), F(x_0, y))\\
    & \le \eta(1) D(F(x,y), F(x_0,y)) =\eta(1) |H(x, y)- H(x_0, y)|.
\end{align*}
By the choice of $y$  and Lemma \ref{l2}, we have
$$|H(x,y)-
H(x_0,y)|\le(\eta(1)/\eta^{-1}(1)) l_G(y)|x- x_0|.$$
  On the other
hand,
\begin{align*}
|H(x, y_0)-H(x_0, y)| & \ge |H(x,y_0)- H(x_0,y_0)|- |H(x_0,y_0)- H(x_0,y)|\\
               & \ge\frac{l_G(y_0)}{\eta(1)/\eta^{-1}(1)} |x-x_0|-|H(x_0,y_0)- H(x_0,y)|.
\end{align*}
      Combining the above inequalities and letting $|x-x_0|\ra \infty$, we obtain
\b{equation}\label{f1}
 l_G(y)\ge \frac{1}{(\eta(1))^3 (\eta^{-1}(1))^{-2}}
l_G(y_0)
   =\frac{C}{(\eta(1))^3 (\eta^{-1}(1))^{-2}}.
\end{equation}
 Switching the roles of $y_0$ and $y$,  we obtain
   $l_G(y)\le (\eta(1))^3 (\eta^{-1}(1))^{-2} l_G(y_0)$.
     By Lemma \ref{verticaldila},
     we have
\b{equation}\label{f2}
 L_G(y)\le \eta(1) l_G(y)\le (\eta(1))^4
(\eta^{-1}(1))^{-2} C.
\end{equation}
%By considering $F^{-1}$ we obtain
%\begin{align*}
 % L_G(y)&\le (\eta_2(1))^3 (\eta^{-1}_2(1))^{-2} L_G(y_0)\\
  %     &\le (\eta_2(1))^3(\eta^{-1}_2(1))^{-2}\eta(1) l_G(y_0)
   %       =C (\eta(1))^3 (\eta^{-1}(1))^{-3}.
%\end{align*}

Because $\R^{n_1}$ is a geodesic space, to show that $H(\cdot,y)$ is
a quasisimilarity it suffices to gain control over $l_{H(\cdot,y)}$
 and  $L_{H(\cdot,y)}$.  %So to prove (2), we note that b
   By   (\ref{f2})   and  Lemma~\ref{l1}~(3),
\[
  L_{H(\cdot, y)}(x)\le  L_G(y)/\eta^{-1}(1)\le C (\eta(1))^4
  (\eta^{-1}(1))^{-3}
\]
for all $x\in \R^{n_1}$,    and by  (\ref{f1}) and
Lemma~\ref{l1}~(2),
\[
   l_{H(\cdot, y)}(x)\ge \frac{1}{\eta(1)}l_G(y)\ge \frac{C}{(\eta(1))^4(\eta^{-1}(1))^{-2}}.
\]
for all $x\in\R^{n_1}$.
  Hence for a.e. $y$,  $H(\cdot,y)$ is an
$(\eta(1)/\eta^{-1}(1))^4$-quasisimilarity with constant $C$.
  A  limiting argument shows this is true for all $y$.
   Hence (1) holds.

(2)   Recall that when $r=2$ we have $Y=\R^{n_2}$ and
$D_Y=|\cdot|^{\alpha_1/\alpha_2}$. Hence  to prove (2)
   it suffices
to show that $G:(\R^{n_2},|\cdot|)\ra(\R^{n_2},|\cdot|)$ is a
$K$-quasisimilarity with
$K=(\eta(1)/\eta^{-1}(1))^{5\alpha_2/\alpha_1}$. As observed before,
$G$ is $\eta_1$-quasisymmetric with respect to the Euclidean metric,
where $\eta_1(t)=(\eta(t^{\alpha_1/\alpha_2}))^{\alpha_2/\alpha_1}$.
Because $\R^{n_2}$ is a geodesic space, it suffices to gain control
over $l^e_{G}$ and $L^e_G$, where $l^e_{G}$ and $L^e_G$ are similar
to $l_{G}$ and $L_G$, but with the Euclidean metric instead of the
metric $D_Y$. Because $l^e_{G}(p)=l_{G}(p)^{\alpha_2/\alpha_1}$ and
$L^e_{G}(p)=L_{G}(p)^{\alpha_2/\alpha_1}$, it suffices to gain
control over the quantities $l_{G}$ and $L_G$ in terms of
$(\eta(1)/\eta^{-1}(1))^5$.

Notice that (1) implies
$$\frac{C}{(\eta(1)/\eta^{-1}(1))^4}\le  l_{H(\cdot, y)}(x)
\le L_{H(\cdot, y)}(x)  \le  C(\eta(1)/\eta^{-1}(1))^4$$
  for all $x\in \R^{n_1}$   and all $y\in Y$.
  By Lemma \ref{l1},  for all $y\in Y$ we have
$$\frac{C}{(\eta(1)/\eta^{-1}(1))^5}\le  l_G(y)
\le L_G(y)  \le  C(\eta(1)/\eta^{-1}(1))^5.$$
    Hence  (2)   holds.

\end{proof}

%The following lemma is valid for all $r\ge 2$.

\begin{Le}\label{l4}
  Suppose that $r\ge 2$ and there are constants $K\ge 1$ and $C>0$    with the
following properties:
\begin{enumerate}
\item[(1)]  $G:(Y,D_Y)\ra (Y,D_Y)$ is a
$K$-quasisimilarity with constant $C$;
\item[(2)]   For each $y\in Y$,  $H(\cdot,y)$  is a
$K$-quasisimilarity with constant $C$.
\end{enumerate}
   Then $F$ is an $(\eta(1)/\eta^{-1}(1))K$-quasisimilarity with
constant $C$.  % where $C$ is the constant in Lemma~\ref{l3}.
\end{Le}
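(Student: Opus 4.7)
The plan is to establish matching upper and lower estimates for $D(F(p), F(p'))$ from the decomposition $F(x, y) = (H(x, y), G(y))$ and the $\eta$-quasisymmetry of $F$ applied at a single well-chosen reference point; the lower bound then follows by symmetry from the upper bound applied to $F^{-1}$. Writing $p = (x, y)$ and $p' = (x', y')$, Theorem~\ref{fixed} (horizontal foliation preserved) gives
\[
  D(F(p), F(p')) = \max\bigl\{|H(x, y) - H(x', y')|,\; D_Y(G(y), G(y'))\bigr\},
\]
and hypothesis~(1) immediately yields $D_Y(G(y), G(y')) \le CK\, D_Y(y, y') \le CK\, D(p, p')$. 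So only the mixed term $|H(x, y) - H(x', y')|$ requires real work.

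To bound the mixed term without incurring the factor of $2$ that a triangle inequality would cost, I would pick an auxiliary point $q$ in the horizontal leaf through $p$ that exactly realises $D(p, q) = D(p, p')$. When $|x - x'| \ge D_Y(y, y')$, take $q = (x', y)$; otherwise, choose any $x'' \in \R^{n_1}$ with $|x - x''| = D_Y(y, y')$ and set $q = (x'', y)$. By construction $D(p, q) = D(p, p')$, so the $\eta$-quasisymmetry of $F$ applied to the triple $p, p', q$ yields $D(F(p), F(p')) \le \eta(1)\, D(F(p), F(q))$. Because $F$ preserves horizontal leaves, $F(p)$ and $F(q)$ both lie in $\R^{n_1} \times \{G(y)\}$, so writing $q = (q_1, y)$ we have $D(F(p), F(q)) = |H(x, y) - H(q_1, y)|$, which hypothesis~(2) bounds by $CK\, |x - q_1| = CK\, D(p, q)$. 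Combining, $D(F(p), F(p')) \le \eta(1)\, CK\, D(p, p')$.

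For the lower bound I would apply this same estimate to $F^{-1}$. The inverse factors as $F^{-1}(u, v) = (I_{G^{-1}(v)}(u), G^{-1}(v))$, satisfies the hypotheses with the same $K$ but with constant $1/C$ (since inverting a $K$-quasisimilarity with constant $C$ produces one with constant $1/C$), and is $\eta_2$-quasisymmetric with $\eta_2(1) = 1/\eta^{-1}(1)$. The upper bound just proved, applied to $F^{-1}$, therefore gives
\[
  D(F^{-1}(u), F^{-1}(u')) \le \eta_2(1)\cdot\tfrac{1}{C}\cdot K\cdot D(u, u') = \frac{K}{C\,\eta^{-1}(1)}\, D(u, u'),
\]
and substituting $u = F(p)$, $u' = F(p')$ yields $D(F(p), F(p')) \ge \bigl(C\,\eta^{-1}(1)/K\bigr)\, D(p, p')$. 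Using $\eta(1) \ge 1$ and $\eta^{-1}(1) \le 1$ to weaken both sharp estimates into the symmetric form required produces $F$ as an $(\eta(1)/\eta^{-1}(1))K$-quasisimilarity with constant $C$.

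The main delicacy I anticipate is the choice of the auxiliary point $q$ realising $D(p, q) = D(p, p')$ exactly; this is what pins the quasisymmetry control at $\eta(1)$ (rather than at $\eta$ of some larger ratio), and lets us avoid picking up a factor of two from the triangle inequality. A minor bookkeeping point is verifying that the triple $p, p', q$ is genuinely non-degenerate in each case, but this is immediate once one observes that the trivial cases ($x = x'$ in Case 1, or $p = p'$) reduce directly to hypothesis~(2) applied within a single horizontal leaf.
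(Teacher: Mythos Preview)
Your proof is correct and follows essentially the same approach as the paper: bound one direction by combining the $\eta$-quasisymmetry of $F$ at an auxiliary point lying in a single horizontal leaf (so that hypothesis~(2) applies to $D(F(p),F(q))$), then obtain the other direction by running the same argument on $F^{-1}$ with $\eta_2(1)=1/\eta^{-1}(1)$. The only cosmetic differences are that the paper proves the lower bound first and places the auxiliary point in the leaf through $y'$ rather than $y$, and in the case $|x-x'|\le D_Y(y,y')$ it invokes hypothesis~(1) directly instead of the auxiliary-point construction; the resulting constants are identical.
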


\begin{proof}
Let $(x_1,y_1),(x_2,y_2)\in\R^{n_1}\times Y$.
We shall first establish a lower bound for $D(F(x_1,y_1),F(x_2,y_2))$.
If $|x_1- x_2|\le  D_Y(y_1,y_2)$,
%where $C$ is the constant in Lemma ???.
then $D((x_1, y_1),(x_2,y_2))=D_Y(y_1, y_2)$ and by (1),
\begin{align*}
 D(F(x_1,y_1), F(x_2, y_2))\ge D_Y(G(y_1),G(y_2))
     \ge \frac{C}{K} D_Y(y_1,y_2)
     =\frac{C}{K} D((x_1,y_1),(x_2,y_2)).
\end{align*}
 If $|x_1-x_2|>D_Y(y_1,y_2)$, then
\[
  D((x_1,y_1),(x_2,y_2))=D((x_1,y_2),(x_2,y_2))=|x_1-x_2|,
\]
and since $F$ is an $\eta$-quasisymmetry, by using~(2),
\begin{align*}
 D(F(x_1,y_1),F(x_2,y_2)) &\ge\frac{1}{\eta(1)} D(F(x_1, y_2),F(x_2, y_2))\\
         &=\frac{1}{\eta(1)} |H(x_1,y_2)- H(x_2,y_2)|\\
         &\ge \frac{C}{\eta(1)K}|x_1- x_2|\\
         &=\frac{C}{\eta(1)K}D((x_1,y_1),(x_2,y_2)).
\end{align*}
%with the second inequality following from  (2).
Hence we have a lower bound for $D(F(x_1,y_1),F(x_2,y_2))$.

By (1), $G^{-1}:(Y,D_Y)\ra (Y,D_Y)$ is a $K$-quasisimilarity with
constant $C^{-1}$. Similarly, (2) implies that for each $y\in Y$,
$(H(\cdot, y))^{-1}$ is a $K$-quasisimilarity with constant
$C^{-1}$.
  %Hence $(H(\cdot, G^{-1}(y)))^{-1}$ is  $K$-quasisimilarity
  %with constant $C^{-1}$.
      Also recall that $F^{-1}$ is an
$\eta_2$-quasisymmetry and $F$ is an $\eta$-quasisymmetry. Now the
argument in the previous paragraph applied to $F^{-1}$ implies
\[
   D(F^{-1}(x_1, y_1), F^{-1}(x_2, y_2))\ge
         \frac{1}{CK\eta_2(1)}D((x_1,y_1),(x_2,y_2)).
\]
It follows that
\[
  D(F(x_1,y_1),F(x_2,y_2))\le CK\eta_2(1) D((x_1, y_1),(x_2,y_2))
            =\frac{CK}{\eta^{-1}(1)}\, D((x_1, y_1),(x_2,y_2))
\]
for  all $(x_1,y_1),(x_2,y_2)\in \R^n$, completing the proof.
%Hence we also obtain  an upper bound for the quantity $D(F(x_1, y_1),F(x_2,y_2))$.
%By considering $F^{-1}$ one obtains an upper bound also.  Notice it
%follows from Lemma \ref{l3} that  $G^{-1}$  and $(H(\cdot, y))^{-1}$ are
%quasisimilarity with constant $C^{-1}$, where $C$ is the
%constant in Lemma \ref{l3}.
\end{proof}

\begin{proof}[\bf{Proof of Theorem~\ref{main}.}]
We induct on $r$.
%Notice that $\eta_2(1)=1/{\eta^{-1(1)}}$.
Lemmas~\ref{l3} and \ref{l4}  yield the desired result in the case
$r=2$.
%We have just proved the $r=2$ case.
%the map $F$ is $(\eta(1)\eta_2(1))^5$-quasisimilarity with some
%constant $C$.
Now we assume that $r\ge 3$ and that the Theorem is true for $r-1$.
By Lemma~\ref{tyson2},
     $F$ induces an $\eta$-quasisymmetry
$G:(Y,D_Y)\ra (Y,D_Y)$. It follows that $G$ is
$\eta_1$-quasisymmetric with respect to the metric
$D_Y^{\alpha_2/\alpha_1}$ (and it is easy to verify that this is
indeed a metric), where
$\eta_1(t)=[\eta(t^{\alpha_1/\alpha_2})]^{\alpha_2/\alpha_1}$. We
point out here that for $(x_2,\cdots,x_r),(x_2',\cdots,x_r')\in Y$,
\[
  D_Y((x_2,\cdots,x_r),(x_2',\cdots,x_r'))^{\alpha_2/\alpha_1}=
      \max\{|x_2-x_2'|,|x_3-x_3'|^{\alpha_2/\alpha_3},\cdots,|x_r-x_r'|^{\alpha_2/\alpha_r}\}.
\]
Hence the induction hypothesis applied to
$G:(Y,D_Y^{\alpha_2/\alpha_1})\ra (Y,D_Y^{\alpha_2/\alpha_1})$ shows
that $G$ is an $(\eta_1(1)/\eta_1^{-1}(1))^{2r}$-quasisimilarity
with constant $C$. Therefore $G:(Y,D_Y) \ra (Y,D_Y)$ is a
$K_1$-quasisimilarity with constant $C^{\alpha_1/\alpha_2}$, where
\begin{equation}\label{eq:K1}
K_1=\left(\frac{\eta_1(1)}{\eta_1^{-1}(1)}\right)^{\frac{2r\alpha_1}{\alpha_2}}
   =\left(\frac{\eta(1)}{\eta^{-1}(1)}\right)^{2r}.
\end{equation}
This implies that  $C^{\alpha_1/\alpha_2}/{K_1}\le l_G(y)\le
L_G(y)\le  C^{\alpha_1/\alpha_2}K_1$  for all $y\in Y$.
Now Lemma~\ref{l1} yields
\[
   C^{\alpha_1/\alpha_2}\ \frac{1}{K_1\eta(1)}\le  l_{H(\cdot,y)}(x)
    \le L_{H(\cdot,y)}(x)\le C^{\alpha_1/\alpha_2}\ \frac{K_1}{\eta^{-1}(1)}
\]
for all $y\in Y$ and all $x\in \R^{n_1}$.
Since $\R^{n_1}$ is a geodesic space, for each $y\in Y$ the map
$H(\cdot,y)$ is a $K_1\frac{\eta(1)}{\eta^{-1}(1)}$-quasisimilarity with constant
$C^{\alpha_1/\alpha_2}$. By Lemma~\ref{l4}, the map
$F$ is a $K_1(\frac{\eta(1)}{\eta^{-1}(1)})^2$-quasisimilarity with constant
$C^{\alpha_1/\alpha_2}$. Here $K_1$ is as in~(\ref{eq:K1}).
%$K_1=\left(\frac{\eta(1)}{\eta^{-1}(1)}\right)^{2r-1}$.
%Note that $(Y,D_Y^{\alpha_2/\alpha_1})=\R^{n_2}\times(Y_0,D_{Y_0})$
%where $Y_0=\R^{n_3}\times\cdots\times\R^{n_r}$ with
%\[
 %  D_{Y_0}((x_3,\cdot,x_r),(p_3,\cdot,p_r))
  %     =\max\{|p_3-x_3|^{\alpha_2/\alpha_3},\cdots,|p_r-x_r|^{\alpha_2/\alpha_r}\}.
%\]
%As before, $G$ can be decomposed into $H_0(\cdot, y_1)$ and $G_0:Y\to Y$, both of which
%are $\eta_1$-quasisymmetric mappings.
%Hence we can apply the induction hypothesis to $G_0$ and then combine with Lemma~\ref{l3}~(2)
%for $H_0$ to see that $G$ is a $K$-quasisimilarity
%with respect to the metric $D_Y^{\alpha_2/\alpha_1}$ for some constant $C$, where
%$K$ depends solely on $\eta$ and $\alpha_i$, $i=1,\cdots,r$. It follows that
%$G$ is a $K^{\alpha_1/\alpha_2}$-quasisimilarity with constant $C^{\alpha_1/\alpha_2}$.
%Finally, an application of Lemma~\ref{l3}~(2) to $H$ completes the proof.
%
%and so
%we can apply the induction hypothesis to $G$ to see that $G$ is
%a $K$-quasisimilarity with some constant $C$, where $K$ depends only on $\eta$ and the
%$\alpha_i$'s. The argument in the proof of Lemma~\ref{l3}
%now shows that for each $y\in Y$, the map $H(\cdot,y): \R^{n_1}\ra \R^{n_1}$  is a
%$K'$-quasisimilarity with constant $C$, where $K'$ depends only on $\eta$ and the
%$\alpha_i$'s. Finally the Theorem follows from the argument
%in the proof of Lemma~\ref{l4}.
\end{proof}

\section{Parabolic Visual   Metrics}\label{parabolic}

In this section we introduce parabolic visual metrics, discuss their
relation with the visual metrics and give a sufficient condition for
  them  to be doubling.   We then use these results to complete the proof of
Theorem~\ref{intromain}.

%For ${\text{CAT}}(-1)$ spaces
Parabolic visual metrics have been defined by Hersonsky-Paulin
(\cite{HP}, see also  \cite{BK}) for ${\text{CAT}}(-1)$ spaces. Here
we formally construct parabolic visual metrics in the setting of
Gromov hyperbolic spaces. Since $G_A$ is Gromov hyperbolic, the
theory developed here is applicable to $\p G_A$ as well.     The
metric $D$ (on $\R^n=\p G_A\setminus\{\xi_0\}$) used in the previous
sections is bilipschitz equivalent with  a parabolic visual metric
constructed in this section,  see the discussion after Proposition
   \ref{para.visual}.

 %The visual metric mentioned in Theorem~\ref{intromain} is
%the visual metric on $\p G_A$ given by the Gromov hyperbolic
%structure on $G_A$.

Parabolic visual metric is defined on the one-point complement
of the ideal boundary. The relationship between visual metric and
parabolic visual metric is similar to the relationship between the
spherical metric (on the sphere)
and the Euclidean metric (on the one point complement of the
sphere). See Proposition~\ref{relation} for the precise statement.

 Let $X$ be a $\delta$-hyperbolic proper geodesic metric space for some $\delta\ge 0$.
Let $\xi\in \partial X$ and $p\in X$. Then there exists a ray from
$p$ to $\xi$. Let $\gamma: [0, \infty)\rightarrow X$ be such a ray.
Define $B_\gamma:X\rightarrow\R$ by
$B_\gamma(x)=\lim_{t\rightarrow +\infty}(d(\gamma(t),x)-t)$. The
triangle inequality implies that the limit exists
and that %$B_\gamma$ is $1$-Lipschitz, that is,
$|B_\gamma(x)-B_\gamma(y)|\le d(x,y)$ for all $x, y\in X$.
Note that $B_{\gamma}(\gamma(t_0))=-t_0$ for all $t_0\ge 0$.
Since any two rays $\gamma_1$ and $\gamma_2$
from $p$ to $\xi$ are at Hausdorff distance
at most $\delta$ from each other, we have
$|B_{\gamma_1}(x)-B_{\gamma_2}(x)|\le \delta$ for all $x\in X$.

The \e{Buseman function} $B_{\xi,p}: X\rightarrow  \R$
centered at $\xi$ with base point $p$ is:
\[
   B_{\xi,p}(x)=\sup\{B_\gamma(x):  \gamma \;
            {\text{is a geodesic ray from}} \; p \; {\text{to}}\; \xi\}.
\]
Because $B_\gamma$ is $1$-Lipschitz, $B_{\xi,p}$ is $1$-Lipschitz.
The above discussion shows that $B_\gamma(x)\le B_{\xi,p}(x)\le
B_\gamma(x)+\delta$ for all $x\in X$ and every ray $\gamma$ from $p$
to $\xi$. By Proposition 8.2 of~\cite{GdlH}, there exists a constant
$c=c(\delta)$ such that for any two points $p_1,p_2\in X$,  any
$\xi\in \partial X$ and all $x\in X$  we have
\begin{equation}\label{buse}
    |B_{\xi, p_1}(x)-B_{\xi, p_2}(x)-B_{\xi,p_1}(p_2)|\le c.
\end{equation}

Let  $\epsilon>0$, $p\in X$,  $\xi\in \p X$,  and
$\eta_1\not=\eta_2\in
\partial X\backslash\{\xi\}$. Given a complete
geodesic $\sigma$ from $\eta_1$ to $\eta_2$, let
$H_{\xi,p}(\sigma)=\inf\{B_{\xi,p}(x): x\in \sigma\}$. Define
\[
   D_{\xi, p, \epsilon}(\eta_1, \eta_2)=e^{-\epsilon\,
 H_{\xi,p}(\eta_1,\eta_2)},
\]
where
\[
  H_{\xi, p}(\eta_1,\eta_2)=\inf\{H_{\xi, p}(\sigma):\ \sigma
     \text{ is a complete geodesic from }\  \eta_1\; {\text{to}}\; \eta_2\}.
\]
Since any two complete geodesics from $\eta_1$ to $\eta_2$ are at
most Hausdorff distance $2\delta$ apart,  we have
$H_{\xi,p}(\sigma)-2\delta\le H_{\xi,p}(\eta_1,\eta_2)\le H_{\xi,p}(\sigma)$
for any complete geodesic $\sigma$ from $\eta_1$ to $\eta_2$.

An argument similar to that found in~\cite[p.124]{CDP} shows the following:

\begin{Prop}\label{para.visual}
There exists a constant $\epsilon_0$, depending only on $\delta$,
with the following property.
If $X$ is a $\delta$-hyperbolic proper geodesic metric space,
for each $0<\epsilon\le \epsilon_0$, each $p\in X$ and
each $\xi\in\partial X$ there exists a
metric $d_{\xi,p,\epsilon}$ on $\partial X\backslash\{\xi\}$ such that
$\frac{1}{2} D_{\xi,p,\epsilon}(\eta_1, \eta_2)
\le d_{\xi,p,\epsilon}(\eta_1, \eta_2)\le  D_{\xi,p, \epsilon}(\eta_1, \eta_2)$
for all $\eta_1,\eta_2\in \partial X\backslash \{\xi\}$.
\end{Prop}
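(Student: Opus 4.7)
The plan is to mimic the classical construction of visual metrics on the Gromov boundary (as in \cite{CDP}), with Busemann-based quantities playing the role of Gromov products. Concretely, I will first check that $D_{\xi,p,\epsilon}$ is a well-defined symmetric function that vanishes exactly on the diagonal, then prove an approximate ultrametric inequality with additive defect depending only on $\delta$, and finally pass from this quasi-metric to a genuine metric via the Frink chain construction, which forces $d_{\xi,p,\epsilon} \ge \tfrac{1}{2} D_{\xi,p,\epsilon}$.

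Symmetry of $D_{\xi,p,\epsilon}$ is immediate since reversing a complete geodesic from $\eta_1$ to $\eta_2$ gives a complete geodesic from $\eta_2$ to $\eta_1$. For non-degeneracy, when $\eta_1 \ne \eta_2$ and neither equals $\xi$, a complete geodesic $\sigma$ joining them tends to points different from $\xi$ at both ends, so $B_{\xi,p}$ is eventually positive on both rays of $\sigma$ and hence attains a finite infimum; thus $0 < D_{\xi,p,\epsilon}(\eta_1,\eta_2) < \infty$. We set $D_{\xi,p,\epsilon}(\eta,\eta)=0$ by convention.

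The main technical step is the approximate ultrametric inequality: there exists $C = C(\delta)$ such that for all distinct $\eta_1,\eta_2,\eta_3 \in \partial X \setminus \{\xi\}$,
\[
 H_{\xi,p}(\eta_1,\eta_3) \;\ge\; \min\{H_{\xi,p}(\eta_1,\eta_2),\, H_{\xi,p}(\eta_2,\eta_3)\} - C,
\]
which exponentiates to
\[
  D_{\xi,p,\epsilon}(\eta_1,\eta_3) \;\le\; e^{\epsilon C}\,\max\{D_{\xi,p,\epsilon}(\eta_1,\eta_2),\, D_{\xi,p,\epsilon}(\eta_2,\eta_3)\}.
\]
To prove this, fix complete geodesics $\sigma_{ij}$ between $\eta_i$ and $\eta_j$. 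Choose $x \in \sigma_{13}$ with $B_{\xi,p}(x)$ within $\delta$ of $H_{\xi,p}(\sigma_{13})$. By the standard thinness of ideal triangles in $\delta$-hyperbolic spaces, there exists $y \in \sigma_{12} \cup \sigma_{23}$ with $d(x,y) \le C'(\delta)$. Since $B_{\xi,p}$ is $1$-Lipschitz, $B_{\xi,p}(x) \ge B_{\xi,p}(y) - C'(\delta)$, and $B_{\xi,p}(y) \ge \min\{H_{\xi,p}(\sigma_{12}), H_{\xi,p}(\sigma_{23})\}$. Combining with the $2\delta$-ambiguity between different complete geodesics joining the same endpoints yields the inequality.

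Finally, choose $\epsilon_0$ so that $e^{\epsilon_0 C} \le 2$. For each $\epsilon \le \epsilon_0$ define
\[
  d_{\xi,p,\epsilon}(\eta_1,\eta_2) = \inf\!\left\{ \sum_{i=0}^{k-1} D_{\xi,p,\epsilon}(\zeta_i,\zeta_{i+1}) : k \ge 1,\; \zeta_0=\eta_1,\; \zeta_k=\eta_2 \right\}.
\]
Symmetry and the triangle inequality are immediate; the bound $d_{\xi,p,\epsilon} \le D_{\xi,p,\epsilon}$ follows from the trivial one-step chain. The reverse bound $d_{\xi,p,\epsilon} \ge \tfrac{1}{2} D_{\xi,p,\epsilon}$ is the content of Frink's lemma applied to a $2$-quasi-ultrametric, which is proved by induction on chain length and gives positivity of $d_{\xi,p,\epsilon}$ off the diagonal. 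The main obstacle is verifying the approximate ultrametric cleanly; once it is in place, the chain/Frink step is routine and the constants track through without difficulty.
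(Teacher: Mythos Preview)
Your proposal is correct and is precisely the argument the paper has in mind: the paper does not give its own proof but simply notes that ``an argument similar to that found in~\cite[p.124]{CDP}'' works, and what you have written is exactly that adaptation---prove an approximate ultrametric inequality for $H_{\xi,p}$ via thinness of ideal triangles and the $1$-Lipschitz property of the Busemann function, then apply the Frink chain construction. One small point to make explicit: when proving the lower bound on $H_{\xi,p}(\eta_1,\eta_3)$ you should let $\sigma_{13}$ range over all geodesics (or take one nearly realizing the infimum), since $H_{\xi,p}(\eta_1,\eta_3)$ is defined as an infimum; with that clarification the argument is complete.
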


The metric  $d_{\xi,p, \epsilon}$  is called a \e{parabolic visual
metric}.  With $X=G_A$, $p=(0,0)$,   by using Lemmas \ref{g1}    and
\ref{quasicen} one can see that $D_{\xi_0, p,1}$ is bilipschitz
equivalent with $D_e$.  It follows from Lemma \ref{norms} and
   Proposition \ref{para.visual}
that $d_{\xi_0, p,\alpha_1}$
 is bilipschitz equivalent with the metric $D$ considered in the
 previous sections.

%With $X=G_A$, $p=(0,0)$, and $\xi=\xi_0$, the work of
%Dymarz~\cite{D} shows that $D_{\xi,p,1}=\tilde{D}$, the quasimetric
%considered in the previous sections of this paper. {\bf Is this
%correct?}

We next discuss how  $d_{\xi,p,\epsilon}$ varies with
$p$ and $\epsilon$.

\begin{Prop}\label{parameter}
Suppose $X$ is a $\delta$-hyperbolic  proper geodesic metric space. Then
\newline
(1) For any $p_1,p_2\in X$, the identity map
${\text{id}}:(\partial X\backslash \{\xi\},d_{\xi,p_1,\epsilon})
\rightarrow(\partial X\backslash \{\xi\},d_{\xi,p_2,\epsilon})$
is a $K$-quasisimilarity, where $K$ depends only on $\delta$;
\newline
(2) For $0< \epsilon_1, \epsilon_2\le \epsilon_0$, the identity map
${\text{id}}:(\partial X\backslash \{\xi\},d_{\xi, p,\epsilon_1})
\rightarrow  (\partial X\backslash \{\xi\},d_{\xi,p,\epsilon_2})$
is $\eta$-quasisymmetric with
$\eta(t)=2^{1+\frac{\epsilon_2}{\epsilon_1}}t^{\frac{\epsilon_2}{\epsilon_1}}$;
\newline
(3) For any $p_1,p_2\in X$ and any
$0< \epsilon_1,\epsilon_2\le \epsilon_0$, the identity map
${\text{id}}:(\partial X\backslash \{\xi\},d_{\xi,p_1,\epsilon_1})
\rightarrow (\partial X\backslash \{\xi\},d_{\xi,p_2,\epsilon_2}) $
is quasisymmetric.
\end{Prop}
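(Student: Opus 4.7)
My plan is to derive all three statements from the basic identity $D_{\xi,p,\epsilon}(\eta_1,\eta_2)=e^{-\epsilon H_{\xi,p}(\eta_1,\eta_2)}$ together with the sandwich $\tfrac12 D_{\xi,p,\epsilon}\le d_{\xi,p,\epsilon}\le D_{\xi,p,\epsilon}$ from Proposition~\ref{para.visual}; equation~\eqref{buse} will handle the change of basepoint and elementary algebra will handle the change of exponent.

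For (1), I would start from~\eqref{buse}. Since $B_{\xi,p_1}(p_2)$ is a constant independent of $x$, taking infimum first over points $x$ on a complete geodesic $\sigma$ joining $\eta_1$ to $\eta_2$ and then over all such $\sigma$ yields
\[
  |H_{\xi,p_1}(\eta_1,\eta_2)-H_{\xi,p_2}(\eta_1,\eta_2)-B_{\xi,p_1}(p_2)|\le c,
\]
where $c=c(\delta)$. Exponentiating by $-\epsilon$ shows that $D_{\xi,p_1,\epsilon}$ and $e^{-\epsilon B_{\xi,p_1}(p_2)}D_{\xi,p_2,\epsilon}$ differ by at most a factor of $e^{\epsilon c}$; combining with the factors of $\tfrac12$ from Proposition~\ref{para.visual} on both sides yields the identity map as a $K$-quasisimilarity with constant $C=e^{\epsilon B_{\xi,p_1}(p_2)}$ and distortion $K\le 4\,e^{\epsilon_0 c}$, which depends only on $\delta$ via the uniform bound $\epsilon\le\epsilon_0(\delta)$.

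For (2), the key point is that $H_{\xi,p}$ does not involve $\epsilon$, so $D_{\xi,p,\epsilon_2}=D_{\xi,p,\epsilon_1}^{\epsilon_2/\epsilon_1}$ as an identity, not merely up to multiplicative error. Writing $\alpha=\epsilon_2/\epsilon_1$, the sandwich inequality gives, for distinct $x,y,z\in\partial X\setminus\{\xi\}$,
\[
  \frac{d_{\xi,p,\epsilon_2}(x,y)}{d_{\xi,p,\epsilon_2}(x,z)}\le 2\left(\frac{D_{\xi,p,\epsilon_1}(x,y)}{D_{\xi,p,\epsilon_1}(x,z)}\right)^{\!\alpha}\le 2^{1+\alpha}\left(\frac{d_{\xi,p,\epsilon_1}(x,y)}{d_{\xi,p,\epsilon_1}(x,z)}\right)^{\!\alpha},
\]
which is the stated $\eta(t)=2^{1+\epsilon_2/\epsilon_1}t^{\epsilon_2/\epsilon_1}$.

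Part (3) will then follow by factoring the identity map $(\partial X\setminus\{\xi\},d_{\xi,p_1,\epsilon_1})\to(\partial X\setminus\{\xi\},d_{\xi,p_2,\epsilon_2})$ through $(\partial X\setminus\{\xi\},d_{\xi,p_2,\epsilon_1})$: the first arrow is a quasisimilarity by (1) (hence quasisymmetric with a linear control function), the second is $\eta$-quasisymmetric by (2), and compositions of quasisymmetries are quasisymmetric. The main (minor) obstacle is the bookkeeping in (1): making sure the various multiplicative constants produced by~\eqref{buse} and Proposition~\ref{para.visual} collapse into a distortion $K$ that depends only on $\delta$ — not on $\epsilon$ or on the chosen basepoints — which is exactly what the uniform bound $\epsilon\le\epsilon_0(\delta)$ is there to ensure.
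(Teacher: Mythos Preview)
Your argument is correct and follows essentially the same route as the paper: the paper also derives (1) directly from inequality~\eqref{buse} and the sandwich of Proposition~\ref{para.visual} (obtaining $K=2e^{c\epsilon_0}$ and $C=e^{\epsilon B_{\xi,p_1}(p_2)}$), states that (2) follows from Proposition~\ref{para.visual} (your explicit computation is exactly the intended one), and deduces (3) from (1) and (2). Your constant $K\le 4e^{\epsilon_0 c}$ is slightly looser than necessary, but this does not affect the conclusion.
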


\begin{proof}
To prove~(1) let $\eta_1, \eta_2\in\partial X\backslash \{\xi\}$.  Then
Proposition \ref{para.visual} and   inequality (\ref{buse})  imply
\begin{align*}
 d_{\xi,p_2,\epsilon}(\eta_1,\eta_2) \le  D_{\xi,p_2,\epsilon}(\eta_1,\eta_2)
     =e^{-\epsilon\,H_{\xi,p_2}(\eta_1,\eta_2)}
   & \le e^{-\epsilon H_{\xi,p_1}(\eta_1,\eta_2)+ \epsilon B_{\xi,p_1}(p_2)+c\epsilon}\\
   & \le 2\,e^{c\epsilon}\cdot e^{\epsilon B_{\xi,p_1}(p_2)}\cdot d_{\xi,p_1,\epsilon}(\eta_1,\eta_2).
 \end{align*}
Similarly, we obtain
$d_{\xi, p_2,\epsilon}(\eta_1,\eta_2)
\ge\frac{1}{2\,e^{c\epsilon}}\cdot e^{\epsilon B_{\xi,p_1}(p_2)}\cdot d_{\xi,p_1,\epsilon}(\eta_1,\eta_2)$.
The statement holds with $K=2\,e^{c\epsilon_0}$ and constant $C=e^{\epsilon B_{\xi,p_1}(p_2)}$.

The claim~(2) follows from  Proposition \ref{para.visual}, and~(3) follows from~(1) and~(2).
\end{proof}

We next discuss the relation between the parabolic visual metric and
the visual metric. Recall that there is a constant $\epsilon_1$
depending only on $\delta$ such that for any $p\in X$ and any
$0<\epsilon\le \epsilon_1$, there is a visual metric $d_{p,
\epsilon}$ on $\partial X$ satisfying
\begin{equation}\label{visu}
  \frac{1}{2}e^{-\epsilon (\eta_1|\eta_2)_p}\le d_{p,\epsilon}(\eta_1,\eta_2)
              \le e^{-\epsilon (\eta_1|\eta_2)_p}
\end{equation}
for all $\eta_1,\eta_2\in \partial X$. Here $(\xi|\eta)_p$ denotes
the Gromov product of $\xi$ and $\eta$ based at $p$, and
%Recall that for $x, y\in X$, the Gromov product of $x$ and $y$
%based at $p$ is
%$(x|y)_p=\frac{1}{2}(d(p, x)+d(p,y)-d(x,y))$,
%and the  Gromov product of two points $\xi,\eta\in \p X$ based at $p$
is defined by %as follows:
\[
   (\xi|\eta)_p=\frac12\sup\ \liminf_{i,j\ra \i}  \, (d(p,x_i)+d(p,y_j)-d(x_i,y_j))
%(x_i| y_j)_p,
\]
where the supremum  is taken over all sequences $\{x_i\}\ra \xi$,
$\{y_i\}\ra \eta$. By the $\delta$-hyperbolicity of $X$,
\begin{equation}\label{gromov.p}
  (\xi|\eta)_p-2\delta\le \liminf_{i,j\ra \i}\,(x_i| y_j)_p\le(\xi|\eta)_p
\end{equation}
for all $p\in X$, all $\xi, \eta\in\p X$ and all sequences
 $\{x_i\}\ra \xi$, $\{y_i\}\ra \eta$; we refer the interested reader to
Chapter~7 of~\cite{GdlH}.

To formulate the relation between visual metric and parabolic visual
metric, we need to recall the notion of metric
inversion and sphericalization. The reader is referred to~\cite{BHX} for more details.

Given a metric space $(X,d)$ and $p\in X$,
there is a metric $d_p$ on $X\backslash\{p\}$ satisfying
\[
   \frac{d(x,y)}{4d(x,p)\, d(y,p)}\le d_p(x,y)\le \frac{d(x,y)}{d(x,p)\, d(y,p)}
\]
for all $x,y\in X\backslash\{p\}$. Furthermore,
%In addition, %$d_p$ is a metric on $I_p(X)$
%whose induced topology on $X-\{a\}$ agrees with
%the subspace topology on $X-\{a\}\subset X$.
the identity map $(X\backslash\{p\},d)\ra (X\backslash\{p\},d_p)$ is
$\eta$-quasim\"obius with $\eta(t)=16t$. We call $d_p$
the \e{metric inversion} of $(X,d)$ at $p$.

Let $X$  be an unbounded metric space and $p\in X$.  Let
$S_p(X)=X\cup \{\infty\}$, where $\infty$ is a point not in $X$. We
define a function $s_p:S_p(X)\times S_p(X)\to [0,\i)$ as follows:
\[
   s_p(x,y)=s_p(y,x)=\begin{cases}
              \frac{d(x,y)}{[1+d(x,p)][1+d(y,p)]}&\text{ if }x,y\in X,\\
              \frac{1}{1+d(x,p)}&\text{ if }x\in X\text{ and }y=\i,\\
              0&\text{ if }x=\i=y.
                      \end{cases}
\]
%In analogy to the construction of the metric $d_p$ for the inversion,
%we construct the metric $\widehat{d}_p$ on $S_p(X)$ by the formula
%\[
%  \widehat{d}_p(x, y):=\inf \Sigma_{i=0}^{k-1}s_p(x_i, x_{i+1}),
%\]
%where the infimum is taken  over all finite sequences of points
%$x_0, \cdots, x_k\in S_p(X)$ with  $x_0=x$  and $x_k=y$.
It was shown in~\cite{BK} that there is a metric
$\widehat{d}_p$  on $S_p(X)$ satisfying
\begin{equation}\label{spherical}
   \frac{1}{4}s_p(x,y)\le \widehat{d}_p(x,y)\le s_p(x,y)\;\;\;
                          {\text{for all}}\;\;\;  x,y\in S_p(X).
\end{equation}
Furthermore, the identity map $(X, d)\ra (X,\widehat{d}_p)$ is
$\eta$-quasim\"obius  with $\eta(t)=16t$. We call $\widehat{d}_p$
the \e{sphericalization} of $(X,d)$ at $p$.

If $(Y,d)$ is a bounded metric space, and if a metric inversion is
applied to $Y$, followed by an application of sphericalization, the
resulting space is bilipschitz equivalent to $(Y,d)$. To be more
precise, let $p\not= q\in Y$; assume $p$ is non-isolated in $Y$ and
let  $f:(Y,d)\rightarrow (S_q(Y\backslash\{p\}),\widehat{(d_p)}_q)$
be the map that is identity on $Y\backslash\{p\}$ with $f(p)=\infty$.  % sends $p$ to $\infty$.
        Then $f$ is bilipschitz (see for example~\cite[Proposition~3.9]{BHX}).

We need the following  result for the proof of Proposition~\ref{relation}.

\b{theorem}[{\text{\cite[Chapter~8]{CDP}}}]\label{treel} %\e{(Chapter 8 of  \cite{CDP})}
Let $(Y,h)$ be a $\delta$-hyperbolic space, $y_0\in Y$, and
$Y_0=\{y_0,y_1,\cdots,y_n\}$ be a set of $n+1$ points in $Y\cup\p Y$.
For each $1\le i\le  n$, let $[y_0,y_i]$ be a fixed geodesic
connecting $y_0$ and $y_i$.
Let $X$ denote the union of the geodesics $[y_0,y_i]$,
%connecting $y_0$ and $y_i$, $1\le i\le n$,
and choose a positive integer $k$ such that $2n\le 2^k+1$. Then
there exists  a simplicial tree, denoted $T(X)$, and a continuous
map $u:X\ra T(X)$ which satisfies the following properties:
\begin{enumerate}
\item[(i)] For each $i$, the restriction of $u$ to the geodesic $[y_0,y_i]$ is
           an isometry;
\item[(ii)] For every $x$ and $y$ in $X$ we have
           $h(x,y)-2k\delta\le d(u(x), u(y))\le h(x,y)$,
            where $d$ is the  metric on $T(X)$.
\end{enumerate}
\end{theorem}

\begin{Prop}\label{relation}
Let $X$ be a  $\delta$-hyperbolic proper geodesic metric space,
$\xi\in \partial X$, $p\in X$ and $0<\epsilon\le\min\{\epsilon_0, \epsilon_1\}$.
%Let $D=d_{q,\epsilon}$ denote the visual metric.
\newline
(1) %Let $D=d_{q,\epsilon}$ denote the visual metric. % and $D_\xi$
 %the metric on $\partial X\backslash \{\xi\}$ described above.
The identity map
\[
  id:(\partial X\backslash\{\xi\},d_{\xi,p,\epsilon})\rightarrow
                        (\partial X\backslash\{\xi\},(d_{p,\epsilon})_\xi)
\]
is  $L$-bilipschitz, where $L$ is a constant depending only on
$\delta$. In particular, %the identity map is bilipschitz
%with respect to
the parabolic visual metric and the metric inversion of the visual metric
about the point $\xi$ are bilipschitz equivalent;
\newline
(2) %Denote by $D'=d_{\xi, p,
%\epsilon}$ the parabolic visual  metric. Let $\eta\in \partial
%X\backslash\{\xi\}$ and  $\widehat {D'}_\eta$ be the
%sphericalization of $D'$  at $\eta$.
   Assume $\xi$ is non-isolated in $\p X$.
Let $\eta\in \p X\backslash\{\xi\}$ and
\[
  f:(\p X,d_{p,\epsilon})\rightarrow(S_\eta(\p X\backslash\{\xi\}),\widehat {(d_{\xi,p,\epsilon})}_\eta)
\]
be the bijection that is identity on $\p X\backslash\{\xi\}$ and maps $\xi$ to $\infty$.
Then $f$ is bilipschitz. In particular, the visual metric and the sphericalization of
the parabolic visual metric are bilipschitz equivalent.
\end{Prop}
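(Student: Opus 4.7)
The plan is to prove (1) first; then (2) will follow by combining (1) with the sphericalization-after-inversion principle from~\cite{BHX} that was recalled in the excerpt. The heart of (1) is the bounded-error identity
\[
  H_{\xi,p}(\eta_1,\eta_2) \;=\; (\eta_1|\eta_2)_p - (\eta_1|\xi)_p - (\eta_2|\xi)_p + O(\delta),
\]
valid for all $\eta_1,\eta_2\in\p X\setminus\{\xi\}$. Once this is established, Proposition~\ref{para.visual} and (\ref{visu}) give
\[
  d_{\xi,p,\epsilon}(\eta_1,\eta_2)\asymp e^{-\epsilon H_{\xi,p}(\eta_1,\eta_2)}\asymp\frac{e^{-\epsilon(\eta_1|\eta_2)_p}}{e^{-\epsilon(\eta_1|\xi)_p}\,e^{-\epsilon(\eta_2|\xi)_p}}\asymp\frac{d_{p,\epsilon}(\eta_1,\eta_2)}{d_{p,\epsilon}(\eta_1,\xi)\,d_{p,\epsilon}(\eta_2,\xi)}\asymp (d_{p,\epsilon})_\xi(\eta_1,\eta_2),
\]
with constants depending only on $\delta$ and $\epsilon$, proving (1).

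To prove the identity, I would apply Theorem~\ref{treel} with $y_0=p$ and $\{y_1,y_2,y_3\}=\{\xi,\eta_1,\eta_2\}$, obtaining a simplicial tree $T$ and a $2k\delta$-rough isometry $u$ from the union of rays $X_0=[p,\xi]\cup[p,\eta_1]\cup[p,\eta_2]$ to $T$. In $T$, the three rays from $u(p)$ either form a tripod at a single branching point or else one of them splits off before the other two diverge, giving three essentially distinct cases; in each, using that Gromov products based at $u(p)$ equal common-initial-segment lengths and that Busemann functions are piecewise-linear along geodesics in a tree, a direct computation verifies the identity exactly in $T$. To transfer the identity back to $X$, observe that in a $\delta$-hyperbolic space the actual geodesic $[\eta_1,\eta_2]$ lies within Hausdorff distance $O(\delta)$ of the concatenation of the two terminal subrays of $[p,\eta_1]$ and $[p,\eta_2]$ that plays the role of the tree path between $u(\eta_1)$ and $u(\eta_2)$; combined with the $1$-Lipschitz property of $B_{\xi,p}$ and (\ref{buse}), this gives the $O(\delta)$ transfer. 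The main obstacle is this last step: Theorem~\ref{treel} as stated does not include $[\eta_1,\eta_2]$ among the approximated geodesics, so one must use the thin-triangle estimate for an ``ideal-sided'' triangle with finite vertex $p$ and ideal vertices $\eta_1,\eta_2$ to justify replacing $[\eta_1,\eta_2]$ by this tree path.

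For (2), first note that sphericalization is invariant under bilipschitz equivalence of the underlying metric: if $d\asymp d'$ on $\p X\setminus\{\xi\}$, then by direct inspection of the defining formula for $s_\eta$, one has $\widehat{d}_\eta\asymp \widehat{d'}_\eta$ on $S_\eta(\p X\setminus\{\xi\})$. Applying this to the pair $d_{\xi,p,\epsilon}$ and $(d_{p,\epsilon})_\xi$ furnished by part~(1) yields $\widehat{(d_{\xi,p,\epsilon})}_\eta\asymp \widehat{((d_{p,\epsilon})_\xi)}_\eta$. The visual metric on $\p X$ is bounded by (\ref{visu}) (since Gromov products are non-negative, $d_{p,\epsilon}\le 1$), and $\xi$ is non-isolated by hypothesis, so the excerpt's statement that sphericalization-after-inversion recovers a bounded metric up to bilipschitz equivalence (applied with inversion center $\xi$ and sphericalization center $\eta$) furnishes a bilipschitz bijection $(\p X,d_{p,\epsilon})\to(S_\eta(\p X\setminus\{\xi\}),\widehat{((d_{p,\epsilon})_\xi)}_\eta)$ that is the identity on $\p X\setminus\{\xi\}$ and sends $\xi\mapsto\infty$; composing with the bilipschitz equivalence of sphericalizations produces the required $f$.
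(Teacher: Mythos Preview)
Your proposal is correct and follows essentially the same route as the paper: both reduce (1) to the bounded-error identity $H_{\xi,p}(\eta_1,\eta_2)+(\xi|\eta_1)_p+(\xi|\eta_2)_p-(\eta_1|\eta_2)_p=O(\delta)$, establish it via the tree approximation of Theorem~\ref{treel} together with a thin-triangle argument to handle the geodesic $[\eta_1,\eta_2]$ (which is not among the approximated rays), and verify the identity exactly on the tree by a case analysis; part (2) is then deduced identically from (1) by sphericalizing and invoking the inversion-then-sphericalization result of~\cite{BHX}. The only cosmetic difference is the choice of base point for the tree: the paper takes $y_0$ on the ray $[p,\xi]$ far from $[\eta_1,\eta_2]$ and uses four geodesics from $y_0$ (to $p,\xi,\eta_1,\eta_2$, whence $k=3$), whereas you take $y_0=p$ and use three.
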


\begin{proof}
Let $D=d_{p,\epsilon}$ denote the visual metric. Then
$(d_{p,\epsilon})_\xi=D_\xi$.

We first prove~(1). %By  Proposition \ref{parameter} (1), there is a constant $K$
%depending only  on $\delta$
%such that the identity map $(\partial X\backslash \{\xi\}, d_{\xi,
%p, \epsilon})\rightarrow (\partial X\backslash \{\xi\}, d_{\xi, q,
%\epsilon})$
%is a   $K$-quasisimilarity.  Hence it suffices to  show that when $q=p$,
%the identity map $(\partial X\backslash \{\xi\}, d_{\xi, p,
%\epsilon})\rightarrow (\partial X\backslash \{\xi\}, D_\xi)$  is
% $L$-bilipschitz  for a constant $L$ depending only on $\delta$.
%  So we assume $q=p$ from now on.
Let $\eta_1, \eta_2\in \p X\backslash\{\xi\}$.
By Proposition~\ref{para.visual} and inequality~(\ref{visu}),
\begin{align*}
   \frac{D_\xi(\eta_1,\eta_2)}{d_{\xi,p,\epsilon}(\eta_1,\eta_2)}
    &\le\frac{d_{p,\epsilon}(\eta_1,\eta_2)}{d_{p,\epsilon}(\xi,\eta_1) d_{p,\epsilon}(\xi,\eta_2)}
                \ 2\, e^{\epsilon H_{\xi,p}(\eta_1,\eta_2)} \\
    &\le e^{-\epsilon (\eta_1|\eta_2)_p} \ 2\,
          e^{\epsilon (\xi|\eta_1)_p}\ 2\,e^{\epsilon (\xi|\eta_2)_p}
          \ 2\, e^{\epsilon H_{\xi,p}(\eta_1,\eta_2)}\\
    &=8\, e^{\epsilon\{H_{\xi,p}(\eta_1,\eta_2)+(\xi|\eta_1)_p+(\xi|\eta_2)_p-(\eta_1|\eta_2)_p\}}.
\end{align*}
Similarly,
\[
  \frac{D_\xi(\eta_1,\eta_2)}{d_{\xi, p, \epsilon}(\eta_1,\eta_2)}
       \ge \frac{1}{8}\,e^{\epsilon\{H_{\xi,p}(\eta_1,\eta_2)
             +(\xi|\eta_1)_p+(\xi|\eta_2)_p-(\eta_1|\eta_2)_p\}}.
\]
Now~(1) follows from the following claim.

\noindent{\bf{Claim}}: There is a constant $C$ depending only on
$\delta$ such that if $\eta_1, \eta_2, \xi \in \p X$ are pairwise
distinct,   then
$|H_{\xi,p}(\eta_1,\eta_2)+(\xi|\eta_1)_p+(\xi|\eta_2)_p-(\eta_1|\eta_2)_p|\le
C$.

We now prove the claim. Let $\gamma$ be a ray from $p$ to $\xi$.
Pick a point $y_0\in \gamma$ that is far away from any complete
geodesic joining $\eta_1$ and $\eta_2$. Let $\gamma_i$ ($i=1,2$) be
a ray from $y_0$ to $\eta_i$. Set $X=\gamma \cup
\gamma_1\cup\gamma_2$. By Theorem~\ref{treel} (with the choice
$k=3$) there is a tree $T:=T(X)$ and a map $u:X\ra T$ with the
properties stated in Theorem~\ref{treel}. Let $y'_0,p'\in T$ and
$\xi',\eta'_1,\eta'_2\in \partial T$ be the points corresponding to
$y_0$, $p$, $\xi$ $\eta_1$  and $\eta_2$ respectively. Also let $x'$
be the branch point of $\xi'\eta'_1$ and $\xi'\eta'_2$, and let $y'$
be the projection of $p'$ onto the tripod $Y:=x'\xi'\cup
x'\eta'_1\cup x'\eta'_2$. Let $y \in \gamma$ be the point on
$\gamma$ that is  mapped to $y'$ by $u$ (by choosing $y_0$ far away
from $p$ we may assume that $y$ lies between $p$ and $y_0$).
  %{\bf Is
  %this easy to see?}
      Similarly let $x_i\in \gamma_i$ be the point
mapped to $x'$ by $u$. Let $\sigma$ be a complete geodesic from
$\eta_1$ to $\eta_2$. Because $X$ is $\delta$-hyperbolic, geodesic
triangles in $X\cup\p X$ are $24\delta$-thin.
  Also notice that the union $x'\eta'_1\cup x'\eta'_2$ is  a  complete
  geodesic in $T$.
%Also by Theorem~\ref{treel} we have that the distance between $x_1$
%and $x_2$ is at most $6\delta$.
 Now the properties of the map $u$
   given by  Theorem~\ref{treel}
 imply  that the Hausdorff
distance between $\sigma$ and $x_1\eta_1\cup x_2\eta_2$ is bounded
above by a constant $c_1=c_1(\delta)$.
%($c_1=100\delta$ is certainly enough).

Choose $z_j\in \gamma_1$ and $w_j\in \gamma_2$
with $z_j\rightarrow \eta_1$ and $w_j\rightarrow \eta_2$.  Then the
property of the map $u$ and inequality~(\ref{gromov.p})
imply that $|(\eta_1|\eta_2)_p-(\eta'_1|\eta'_2)_{p'}|\le 11\delta$.
Notice that on the tree $T$ we have $(\eta'_1|\eta'_2)_{p'}=d(p',\eta'_1\eta'_2)$.
Hence  $|(\eta_1|\eta_2)_p-d(p',\eta'_1\eta'_2)|\le 11\delta$.
Similar inequalities hold for $(\xi|\eta_1)_p$ and $(\xi|\eta_2)_p$.

Since the Hausdorff distance between $\sigma$ and
$x_1\eta_1\cup x_2\eta_2$ is at most $c_1$, the definition
of $H_{\xi,p}(\sigma)$  and the property of the map $u$ imply
that $|H_{\xi,p}(\sigma)-H_{\xi',p'}(\eta'_1, \eta'_2)|\le c_1+13\delta$.
The discussion about $H_{\xi,p}(\eta_1, \eta_2)$ shows that
$|H_{\xi,p}(\eta_1,\eta_2)-H_{\xi,p}(\sigma)|\le 2\delta$. It follows that
$|H_{\xi,p}(\eta_1, \eta_2)-H_{\xi',p'}(\eta'_1,\eta'_2)|\le c_1+15\delta$.
Now on the tree $T$, by considering three cases depending on whether
$y'\in x'\xi'$, $y'\in x'\eta'_1$ or $y'\in x'\eta'_2$, we can verify that
\[
    H_{\xi',p'}(\eta'_1,\eta'_2)+d(p',\xi'\eta'_1)+d(p',\xi'\eta'_2)-d(p',\eta'_1\eta'_2)=0.
\]
Now the claim follows by combining the above estimates.

We now prove~(2). By~(1), the identity map
\[
  id:(\p X\backslash\{\xi\},d_{\xi,p,\epsilon})\rightarrow(\p X\backslash\{\xi\},D_\xi)
\]
is bilipschitz.
Pick $\eta\in\partial X\backslash \{\xi\}$.
Then  the map $id$  extends to  a map $F$ between their sphericalizations
\[
  F: (S_{\eta}(\p X\backslash\{\xi\}),\widehat{(d_{\xi,p,\epsilon})}_{\eta})
              \rightarrow (S_{\eta}(\p X\backslash\{\xi\}),\widehat{(D_\xi)}_{\eta}).
\]
Since $id $ is bilipschitz, inequality~(\ref{spherical}) can be used
on $\widehat{(d_{\xi,p,\epsilon})}_{\eta}$ and
$\widehat{(D_\xi)}_{\eta}$ to verify that $F$ is also bilipschitz.
On the other hand,   the natural identification between $(\p X,
d_{p,\epsilon})$ and $(S_{\eta}(\p
X\backslash\{\xi\}),\widehat{(D_\xi)}_{\eta})$   is   bilipschitz.
The statement now follows.
\end{proof}

We next give a sufficient condition for the parabolic visual metric
to be doubling. Recall that a metric space is \e{doubling} if there is a constant $N$ such that
every  open ball with radius $R>0$ can be covered by at most $N$
open balls with radius $R/2$. By a theorem of Assouad  (\cite{A}), a metric space
is doubling if and only if the metric space admits a
quasisymmetric embedding into some Euclidean space.

A metric space X has \e{bounded growth at some scale}, if there are
constants $r,R$  with  $R> r>0$, and an integer $N\ge 1$ such that
every open ball of radius $R$ in $X$ can be covered by $N$ open
balls of radius $r$.

  The following is a consequence of Proposition~\ref{relation},
a result of Bonk-Schramm %(\cite[Theorem~9.2]{BS})
  and  Assouad's  theorem.
%together with the fact that metric spaces with finite Assouad
 %dimension are doubling.

\begin{theorem}\label{doubling}
Let $X$ be a $\delta$-hyperbolic geodesic metric space with bounded
growth at some scale. Then for any $\xi\in \partial X$, $p\in X$ and
any $0<\epsilon\le\epsilon_0$, the metric space $(\p
X\backslash\{\xi\},d_{\xi,p,\epsilon})$ is doubling.
\end{theorem}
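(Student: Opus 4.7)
The plan is to deduce the doubling property of the parabolic visual metric from that of the visual metric on $\p X$, via the bilipschitz correspondence in Proposition~\ref{relation}.

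First, I would show $(\p X, d_{p,\epsilon})$ is doubling. The Bonk-Schramm embedding theorem is the key input here: a $\delta$-hyperbolic geodesic metric space with bounded growth at some scale admits a quasi-isometric embedding into some real hyperbolic space $\h^N$. Such an embedding induces a quasisymmetric embedding of the visual boundaries. Since $\p \h^N$ with its visual metric is bilipschitz to the standard Euclidean sphere (which is doubling), and since quasisymmetric embeddings into doubling spaces preserve the doubling condition (by one direction of Assouad's theorem), we conclude that $(\p X, d_{p,\epsilon})$ is doubling for every $0<\epsilon\le \epsilon_1$.

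Next I would invoke Proposition~\ref{relation}(1), which says that the parabolic visual metric $d_{\xi, p, \epsilon}$ on $\p X \setminus \{\xi\}$ is bilipschitz equivalent to the metric inversion $(d_{p,\epsilon})_\xi$ of the visual metric at $\xi$. Since the doubling condition is bilipschitz invariant, it suffices to show that the metric inversion at a point of a doubling metric space is itself doubling.

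The main obstacle is this last step. One clean approach is as follows: by Assouad's theorem, the doubling space $(\p X, d_{p,\epsilon})$ admits a quasisymmetric embedding $\varphi$ into some Euclidean space $\R^N$. Post-composing $\varphi$ with a Euclidean inversion $I$ centered at $\varphi(\xi)$, and using that the identity map from a metric to its metric inversion is quasim\"obius with controlled distortion (together with the fact that a Euclidean inversion is bilipschitz to a metric inversion of $\R^N$), a short verification shows that $I \circ \varphi$ descends to a quasisymmetric embedding of $(\p X \setminus \{\xi\}, (d_{p,\epsilon})_\xi)$ into $\R^N$. Applying Assouad's theorem in the reverse direction then yields the doubling property of the metric inversion, completing the proof. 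Alternatively, preservation of the doubling condition under metric inversion can be cited directly from~\cite{BHX}.
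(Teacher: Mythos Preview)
Your proof is correct and follows essentially the same route as the paper: Bonk--Schramm gives a quasisymmetric embedding of the visual boundary into some Euclidean space, this embedding passes to the metric inversions (your ``short verification'' is exactly the content of the paper's Lemma~\ref{inversion}), the metric inversion of Euclidean space is again Euclidean, and Proposition~\ref{relation}(1) transfers the resulting doubling property to the parabolic visual metric. The only cosmetic difference is the order in which you invoke Proposition~\ref{relation}(1) and the Assouad/inversion step.
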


\begin{proof}
Under the assumption of the Theorem,
Bonk-Schramm has proved that the ideal boundary with the visual
metric is doubling (\cite[Theorem~9.2]{BS}). Hence there is a
quasisymmetric embedding
 $f:(\partial X, d_{p, \epsilon})\rightarrow \R^n$ for some $n\ge 1$.
By Lemma~\ref{inversion} below, $f:(\p
X\backslash\{\xi\},(d_{p,\epsilon})_\xi)\rightarrow
(\R^n\backslash\{f(\xi)\},|\cdot|_{f(\xi)})$ is also a
quasisymmetric  embedding, where $|\cdot|$ denotes the  Euclidean
metric. However, the metric inversion of the Euclidean space is
still a Euclidean space (with one point removed). Hence $(\p
X\backslash\{\xi\},(d_{p,\epsilon})_\xi)$ admits a quasisymmetric
embedding into a Euclidean space, and so is doubling.
   %{\bf Is
  %doubling a quasisymmetric invariant?}
Since doubling is invariant
under bilipschitz map, the theorem now follows from
Proposition~\ref{relation} (1).
\end{proof}

Recall that a homeomorphism  $f:X\to Y$ between two metric spaces is
$\eta$-quasim\"obius   for some
  homeomorphism $\eta: [0, \i)\ra [0, \i)$,
if for every four distinct points $x_1,x_2,x_3,x_4\in X$, we have
\[
  \frac{d(f(x_1),f(x_3))\, d(f(x_2),f(x_4))}{d(f(x_1),f(x_4))\,d(f(x_2),f(x_3))}
       \le\eta\left(\frac{d(x_1,x_3)\,d(x_2,x_4)}{d(x_1,x_4)\,d(x_2,x_3)}\right).
\]

\begin{Le}\label{inversion}
Suppose that $f:(X,d)\rightarrow (Y,d)$ is a quasisymmetric embedding.
Then for any $p\in X$,
$f:(X\backslash\{p\},d_p)\rightarrow (Y\backslash\{f(p)\},d_{f(p)})$
is also a quasisymmetric embedding.
\end{Le}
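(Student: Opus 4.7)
The plan is to convert the quasisymmetry claim in the inverted metrics into a quasim\"obius statement in the original metrics and then cite a standard equivalence.

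First I would record the key elementary comparison that comes straight from the two-sided definition of $d_p$: for any three distinct points $x,y,z\in X\setminus\{p\}$,
\[
   \tfrac{1}{4}\,\frac{d(x,y)\,d(p,z)}{d(x,z)\,d(p,y)}\ \le\ \frac{d_p(x,y)}{d_p(x,z)}\ \le\ 4\,\frac{d(x,y)\,d(p,z)}{d(x,z)\,d(p,y)},
\]
and the analogous bound holds on the target side with $d_{f(p)}$ and $f(p)$ in place of $d_p$ and $p$. So, up to a multiplicative factor of $4$, the ratio of inverted-metric distances is exactly the four-point cross ratio $[x,p,y,z]:=d(x,y)d(p,z)/(d(x,z)d(p,y))$.

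Second I would invoke the theorem of V\"ais\"al\"a that every $\eta$-quasisymmetric embedding between arbitrary metric spaces is $\theta$-quasim\"obius, with $\theta$ depending only on $\eta$. Applied to the four distinct points $x,p,y,z\in X$, this produces
\[
   \frac{d(f(x),f(y))\,d(f(p),f(z))}{d(f(x),f(z))\,d(f(p),f(y))}\ \le\ \theta\!\left(\frac{d(x,y)\,d(p,z)}{d(x,z)\,d(p,y)}\right).
\]

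Third, I would chain the two displays together. Using the upper comparison on the target, the quasim\"obius estimate, and the lower comparison on the source, I obtain
\[
   \frac{d_{f(p)}(f(x),f(y))}{d_{f(p)}(f(x),f(z))}\ \le\ 4\,\theta\!\left(4\cdot\frac{d_p(x,y)}{d_p(x,z)}\right),
\]
so $f$ is $\eta'$-quasisymmetric from $(X\setminus\{p\},d_p)$ to $(Y\setminus\{f(p)\},d_{f(p)})$ with $\eta'(t)=4\,\theta(4t)$, a homeomorphism of $[0,\infty)$ since $\theta$ is.

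The main obstacle is the invocation of the QS$\Rightarrow$QM implication: one must be careful that this direction holds for embeddings between general metric spaces (no boundedness, completeness, or connectedness needed), which is the content of V\"ais\"al\"a's theorem. Once that is accepted, the rest is a clean chase of constants through the definition of metric inversion.
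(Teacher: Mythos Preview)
Your proof is correct and follows essentially the same route as the paper: both reduce the inverted-metric ratio to a cross ratio via the two-sided bounds for $d_p$, invoke V\"ais\"al\"a's implication that $\eta$-quasisymmetric embeddings are $\tilde\eta$-quasim\"obius, and arrive at the same distortion function $\eta'(t)=4\,\tilde\eta(4t)$.
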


\begin{proof}
Suppose $f$ is an $\eta$-quasisymmetric embedding for some
%homeomorphism
$\eta$. %:  [0, \infty)\rightarrow [0, \infty)$.
Then $f$ is an  $\tilde\eta$-quasim\"obius embedding for some $\tilde \eta$
depending only on $\eta$, see~\cite[Theorem~6.25]{V2}. %That is, for any
%four distinct points  $x_1, x_2, x_3, x_4\in X$, we have
%\[
%  \frac{d(f(x_1),f(x_3))\, d(f(x_2),f(x_4))}{d(f(x_1),f(x_4))\,d(f(x_2),f(x_3))}
%       \le\tilde\eta\left(\frac{d(x_1,x_3)\,d(x_2,x_4)}{d(x_1,x_4)\,d(x_2,x_3)}\right).
%\]
Now let $x,y,z\in X\backslash\{p\}$ be three
distinct points. Set $q=f(p)$. We calculate
\begin{align*}
 \frac{d_q(f(x),f(z))}{d_q(f(y),f(z))}
  &\le\frac{d(f(x),f(z))}{d(f(x),f(p))\, d(f(z),f(p))}\cdot
         \frac{4\, d(f(y),f(p))\, d(f(z),f(p))}{d(f(y),f(z))} \\
   &=4\ \frac{d(f(x),f(z))\, d(f(y),f(p))}{d(f(x),f(p))\, d(f(y),f(z))}.
 \end{align*}
Similarly,
\[
  \frac{d_p(x,z)}{d_p(y,z)}\ge \frac{1}{4}\ \frac{d(x,z)\, d(y,p)}{d(x,p)\, d(y, z)}.
\]
It follows that
\[
   \frac{d_q(f(x),f(z))}{d_q(f(y),f(z))}\le 4\ \tilde\eta\left(4\,\frac{d_p(x,z)}{d_p(y,z)}\right).
\]
Hence $f:(X\backslash\{p\},d_p)\rightarrow(Y\backslash\{f(p)\},d_{f(p)})$
is $\eta'$-quasisymmetric with $\eta'(t)=4\,\tilde\eta(4t)$.
\end{proof}

Let $(X,d)$ be an unbounded complete metric space  with an Ahlfors $Q$-regular ($Q>1$)
Borel measure $\mu$, and $p\in X$. On the sphericalization
$(S_p(X),\widehat{d}_p)$ we define a measure $\mu'$ as follows:
$\mu'(\{\infty\})=0$, and
on $X=S_p(X)\backslash\{\infty\}$, $\mu'$ is absolutely continuous with
respect to $\mu$ with Radon-Nikodym derivative
\[
  \frac{d\mu'}{d\mu}(x)=\frac{1}{(1+d(p, x))^{2Q}}
\]
for $x\in X$.   %Since $S_p(X)$ is compact in the metric
  %$\widehat{d}_p$,
  It can be shown that $(S_p(X),\widehat{d}_p)$ with $\mu'$ is also
$Q$-regular.  % by testing the $Q$-regularity condition for radii
  %$r\le 1/2$.
%metric space $X$ endowed with a Borel measure $\mu$ i

\begin{proof}[{\text{\bf{Proof of   Theorem~\ref{intromain}}}}]
Let $F: (\p G_A,d_{p,\epsilon})\ra (\p G_A,d_{p,\epsilon})$ be a
quasisymmetric map, where $d_{p, \epsilon}$ is a visual metric
($p\in G_A$ and $\epsilon>0$ is sufficiently small). We first prove
that $F(\xi_0)=\xi_0$. Let $D$
  be the metric on $\p G_A\backslash\{\xi_0\}=\R^n$  considered in the previous
  sections.   We have observed that $D$ is bilipschitz equivalent
  with a parabolic visual metric on $\p G_A\backslash\{\xi_0\}$.
  %denote the parabolic visual metric on $\p G_A\backslash\{\xi_0\}$,
  Let  $\theta\in\p G_A\backslash\{\xi_0\}=\R^n$.
Proposition~\ref{relation}  implies that  the natural identification
\[
  (S_\theta(\R^n),\widehat{D}_\theta)
     =(S_\theta(\p G_A\backslash\{\xi_0\}),\widehat{D}_\theta)\ra(\p G_A,d_{p, \epsilon})
\]
is bilipschitz. It follows that (after the above natural
identification)
\[
  F:(S_\theta(\R^n),  \widehat{D}_\theta)\ra %=(S_\theta(\p G_A\backslash
  %\{\xi_0\}),  \widehat{D}_\theta)
  %\ra(S_\theta(\p G_A\backslash
   %\{\xi_0\}),  \widehat{D}_\theta)$$  %  =
  (S_\theta(\R^n),  \widehat{D}_\theta)
\]
is quasisymmetric. Let $\mu$ be the product of the Hausdorff measures on the factors
$(\R^{n_i}, |\cdot|^{\frac{\alpha_1}{\alpha_i}})$ of $\R^n$.
Since the metric measure space $(\R^n,D,\mu)$ is $Q$-regular with
$Q=\Sigma_{i=1}^r n_i\frac{\alpha_i}{\alpha_1}$, the remark
preceding the proof shows that
the metric measure space $(S_\theta(\R^n),\widehat{D}_\theta,\mu')$ is also
$Q$-regular. Here $\mu'$ is obtained from $\mu$ as described in the remark
preceding the proof. %and
%$\mu$ is the product of the Hausdorff measures on the factors
%$(\R^{n_i}, |\cdot|^{\frac{\alpha_1}{\alpha_i}})$  of  $\R^n$.
Hence Theorem \ref{tyson} applies to the map
$F:(S_\theta(\R^n),\widehat{D}_\theta)\ra (S_\theta(\R^n),\widehat{D}_\theta)$
and the measure $\mu'$.

Suppose $F(\xi_0)\not=\xi_0$. Under the above natural identification,
this means that $F(\infty)\not=\infty$.
Then ${F}^{-1}(\infty)$ lies in exactly one horizontal
leaf. Fix some $y\in Y$ such that $\R^{n_1}\times \{y\}$ does not
contain ${F}^{-1}(\infty)$. Notice that the subset
$(\R^{n_1}\times\{y\})\cup \{\infty\}$ of $S_\theta(\R^n)$ is an
$n_1$-dimensional topological sphere. So $F(\R^{n_1}\times \{y\}\cup
\{\infty\})$ is an $n_1$-dimensional topological sphere in $\R^n$.
Since each horizontal leaf is an $n_1$-dimensional Euclidean space,
the set $F(\R^{n_1}\times\{y\}\cup \{\infty\})$ is not contained in
any horizontal leaf. It follows that as a dense subset of
$F(\R^{n_1}\times\{y\}\cup \{\infty\})$, the set
$F(\R^{n_1}\times\{y\})$ is also not contained in any horizontal
leaf. Hence there are two points $p$ and $q$ in
$\R^{n_1}\times\{y\}$ such that $F(p)$ and $F(q)$ are not in the
same horizontal leaf.

Let $\gamma$ be the Euclidean line segment from $p$ to $q$ and
$\Gamma$ be the family of straight segments parallel to $\gamma$ in
$\R^n$ whose union is an $n$-dimensional circular cylinder $C$ with
$\gamma$ as the central axis. The curves in $\Gamma$ are rectifiable
with respect to the metric $D$. Since $F$ is a homeomorphism, by
choosing the radius of the circular cylinder to be sufficiently
small (by a compactness argument) we may assume that no curve in
$\Gamma$ is mapped into a horizontal leaf and that $F^{-1}(\infty)$
is not in this cylinder. It follows that $F(\Gamma)$ has no locally
rectifiable curve  with respect to $D$.
Now notice that both $C$ and $F(C)$ are compact subsets of  $\R^n$.
Hence the two metrics $D$ and
$\widehat{D}_\theta$ are bilipschitz equivalent on $C$, as well
as on $F(C)$. It follows that $F(\Gamma)$ has no locally
rectifiable curve  with respect to $\widehat{D}_\theta$.
Hence ${\text{Mod}}_Q F(\Gamma)=0$  in the metric measure  space
$(S_\theta(\R^n), \widehat{D}_\theta, \mu')$. Theorem~\ref{tyson}
then implies that ${\text{Mod}}_Q \Gamma=0$ in the metric measure space
$(S_\theta(\R^n), \widehat{D}_\theta, \mu')$.
On the other hand,
${\text{Mod}}_Q \Gamma>0$  in the metric measure space
$(\R^n, D, \mu)$ (see the proof of  Theorem \ref{fixed}).
%we have observed in the proof of ?? that
% as a curve family in $(\R^n, D)$
%$\Gamma$ has nontrivial $Q$-Modulus.
Since $D$ and $\widehat{D}_\theta$ are bilipschitz equivalent on $C$, and
$\mu$ and $\mu'$ are also comparable on $C$,
we have ${\text{Mod}}_Q \Gamma>0$ in the metric measure space
$(S_\theta(\R^n),\widehat{D}_\theta, \mu')$, a contradiction.
%considered as a curve family in
%$(S_\theta(\R^n),\widehat{D}_\theta)$,  $\Gamma$ also
%nontrivial $Q$-Modulus.
%Now we obtain a contradiction to Tyson's theorem.
Hence $ F(\xi_0)=\xi_0$.
%In Theorem  \ref{fixed}  we have showed that $F(\xi_0)=\xi_0$.

Next we prove that $F$ is bilipschitz with respect to the metric $D$.
Since the map $F:(\p G_A,d_{p,\epsilon})\ra (\p G_A,d_{p,\epsilon})$
is quasisymmetric, Lemma~\ref{inversion} implies that
\[
  F:(\p G_A\backslash\{\xi_0\}, (d_{p, \epsilon})_{\xi_0})
   \rightarrow(\p G_A\backslash\{\xi_0\}, (d_{p, \epsilon})_{\xi_0})
\]
is also a quasisymmetric map. By Proposition~\ref{relation},
$id:(\p G_A\backslash\{\xi_0\},(d_{p,\epsilon})_{\xi_0})
       \rightarrow(\p G_A\backslash\{\xi_0\},d_{\xi_0,p,\epsilon})$
is bilipschitz, where $d_{\xi_0,p,\epsilon}$ is a parabolic
visual metric. It follows that
\[
   F:(\p G_A\backslash\{\xi_0\},d_{\xi_0,p,\epsilon})
         \rightarrow(\p G_A\backslash\{\xi_0\},d_{\xi_0,p,\epsilon})
\]
is quasisymmetric. By Proposition~\ref{parameter}, any two parabolic
visual metrics are quasisymmetrically equivalent. By the discussion
following Proposition  \ref{para.visual}, it follows that
    $F:(\p G_A\backslash\{\xi_0\}, D)\ra (\p
G_A\backslash\{\xi_0\},D)$ is quasisymmetric. Now the result follows
from Theorem~\ref{main}.
\end{proof}

\section{Consequences}\label{applica}

In this section we will prove the corollaries from the introduction.

We note that because $G_A$ has sectional curvature $-\alpha_r^2\le
K\le -\alpha_1^2$, $G_A$ is a proper geodesic $\delta$-hyperbolic
space with $\delta$ depending only on $\alpha_1$.

\begin{proof}[{\text{\bf{Proof of Corollary~\ref{finitege}.}}}] Suppose there is a
quasiisometry $f:G_A\ra G$ from $G_A$ to a finitely generated group
$G$, where $G$ is equipped with a fixed word metric. Since $G_A$ is
Gromov hyperbolic, it follows that $G$ is Gromov hyperbolic and $f$
induces a quasisymmetric map $\p f: \p G_A\ra \p G$.  The left
translation of $G$ on itself induces an action of $G$ on the Gromov
boundary $\p G$ by quasisymmetric maps.  By conjugating this action
with $\p f$ we obtain an action of $G$ on $\p G_A$ by quasisymmetric
maps.  By Theorem \ref{intromain}, this action has a global fixed
point.  It follows that the action of $G$ on $\p G$ has a global
fixed point. This can happen only when $G$ is virtually infinite
cyclic, in which case the Gromov boundary $\p G$ consists of only
two points. This contradicts the fact that $\p G_A$ is a sphere of
dimension $n\ge 2$ (since $r\ge 2$).
\end{proof}

The proofs of Corollaries   \ref{c2}
    and    \ref{c0} require
some preparation.

Let $X$ be a proper geodesic $\delta$-hyperbolic space and
$\xi_1,\xi_2,\xi_3\in \p X$ be three distinct points in the Gromov
boundary. For any constant $C\ge 0$, a point $x\in X$ is called a
$C$-\e{quasicenter} of the three points $\xi_1,\xi_2,\xi_3$ if for
each $i=1, 2, 3$, there is a geodesic $\sigma_i$ joining $\xi_i$ and
$\xi_{i+1}$ ($\xi_4:=\xi_1$) such that the distance from $x$ to
$\sigma_i$  is at most $C$. For any $C\ge 0$, there is a constant
$C'$ that depends only on $\delta$ and $C$ such that the distance
between any two  $C$-quasicenters of $\xi_1,\xi_2,\xi_3$ is at most
$C'$.

 The following three lemmas hold in all Hadamard manifolds with
 pinched negative sectional curvature.

\b{Le}\label{g1}
  {Let $(x,t)\in G_A=\R^n\times \R$ be an arbitrary point, and
  $\sigma$ a geodesic through $(x,t)$ and tangent to the horosphere
    $\R^n\times \{t\}$. Let $p, q\in \R^n\approx \p G_A\backslash \{\xi_0\}$
      be the two  endpoints of $\sigma$.  Then $(x,t)$ is a
       $12\delta$-quasicenter for $p, q, \xi_0$.}

       \end{Le}

       \b{proof}
As an  ideal geodesic triangle
 in a  $\delta$-hyperbolic space,
$\sigma\cup \gamma_p\cup \gamma_q$
 is
$4\delta$-thin. Hence there is some point
 $m\in \gamma_p\cup \gamma_q$  with  $d((x,t), m)\le 4\delta$. We may assume
   $m=(p, t')\in \gamma_p$ for some $t'\in \R$.  We may further
   assume that $(p, t')$ is the point on $\gamma_p$ nearest to
   $(x,t)$.  Then the geodesic segment from $(x,t)$ to $(p, t')$
   must be perpendicular to the geodesic $\gamma_p$. This implies
   $t'>t$.  Since
  $(x,t)$ is the highest point on $\sigma$  and  is  more than
  $4\delta$ below the horosphere  through $(p, t'+4\delta)$, we have
$d((p, t'+4\delta), \sigma)>4 \delta$.
 Now the thin triangle condition applied to
  the point $(p, t'+4\delta)$ and the triangle
$\sigma\cup \gamma_p\cup \gamma_q$
 implies there is some $(q, t'')\in \gamma_q$ with
  $d((p, t'+4\delta),(q, t''))\le 4\delta$.  The triangle inequality
    together with $d((p,t'),(p,t'+4\delta))=4\delta$
  implies $d((x,t), (q, t''))\le 12 \delta$. Hence $(x,t)$ is a
  $12\delta$-quasicenter for
$p, q, \xi_0$.

       \end{proof}

Let $M$ be a simply connected Riemannian manifold with sectional
curvature
 $-b^2\le K\le -a^2$, where $b>a>0$. For any $\xi\in \p M$, any
 horosphere $\mathcal{H}$ centered at $\xi$, and any two points $x,
 y\in \mathcal{H}$, the distance $d_{\mathcal{H}}(x,y)$ between $x$
 and $y$ in the horosphere is related to $d(x,y)$  by (see \cite{HI}):
\b{equation}\label{pinching} \frac{2}{a}\,\sinh \left(\frac{a}{2}
\,d(x,y)\right)\le d_{\mathcal{H}}(x,y)\le \frac{2}{b}\,\sinh
\left(\frac{b}{2}\, d(x,y)\right).
\end{equation}
 For any $s>0$, let $\mathcal{H}_s$ be the horosphere centered at
 $\xi$ that is closer to $\xi$ than $\mathcal{H}$ and is at distance
 $s$ from $\mathcal{H}$.   Let $\phi_s: \mathcal{H}\ra
 \mathcal{H}_s$ be the map which sends each $x\in \mathcal{H}$ to
 the unique intersection point of $x\xi$ with $\mathcal{H}_s$.
  Then for each tangent vector $v\in T_x{\mathcal{H}}$   of $\mathcal{H}$ at $x$ we have (see
  \cite{HI}):
   $e^{-bs}\lVert v\rVert\le \lVert d\phi_s(v)\rVert \le e^{-as} \lVert v\rVert.$
   It follows that for any rectifiable curve $c$ in $\mathcal{H}$,
    the lengths of $c$ and $\phi(c)$ are related by
     $ e^{-bs}\ell(c)\le \ell(\phi(c))\le e^{-as} \ell(c)$.

\b{Le}\label{quasicen} { Let $p, q\in \R^n\approx \p G_A\backslash
\{\xi_0\}$   and
   suppose  that   $D_e(p,q)=e^{t_0}$.  Then  $(p,t_0)$ is a $C$-quasicenter for $p, q, \xi_0$,
       where $C$ depends only on  $\alpha_1$  and $\alpha_r$.

}
\end{Le}

\b{proof}
    Let $\sigma$ be the geodesic  in $G_A$  joining $p,  q\in \p G_A\backslash
\{\xi_0\}$, and
 $(x,t)$  the highest point on $\sigma$. We may assume $d((x,t),
 \gamma_p)\le 4\delta$. Let $(p, t_1)\in \gamma_p$ be the point
 nearest to $(x,t)$. Then  $t_1>t$
  and the argument in the proof of Lemma~\ref{g1} gives a point
$(q,t_2)\in \gamma_q$ such that   $d((p,t_1),(q,t_2))\le 8\delta$.
It follows that $|t_1-t_2|\le 8\delta$.
 %and $d((p,t_1), (q, t_2))\le
 %8\delta$  for  some  $(q, t_2)\in \gamma_q$. See the proof of Lemma \ref{g1}.
       %It follows that
       %$|t_1-t_2|\le 8\delta$.
       The triangle inequality then implies
        $d((p, t_1), (q, t_1))\le 16 \delta$.

          By the definition of $D_e$, we have
           $d_{\R^n\times \{t_0\}}(p, q)=1$.  Hence  $d((p, t_0), (q, t_0))\le
           1$.   If $t_0\le t_1$, then the convexity of the distance
           function
      $f(t):=d(\gamma_p(t), \sigma)=d((p,t), \sigma)$
           implies $d((p,t_0), \sigma)\le d((p, t_1),
           \sigma)\le 4\delta$. In this case,
             $(p,t_0)$ is a $\max\{1, 4\delta\}$-quasicenter of
             $p,q, \xi_0$.  Now we suppose $t_0>t_1$.
              Join $(p, t_1)$ and $(q, t_1)$ by a shortest path   $c$  in
              the horosphere   $\mathcal{H}:=\R^n \times \{t_1\}$.
   By (\ref{pinching})  we have
                $\ell(c)\le \frac{2}{\alpha_r}\sinh
\left(8\alpha_r \delta\right)$.   The projection
              $\phi_{t_0-t_1}(c)$ is a path in the horosphere
              $\R^n\times \{t_0\}$ joining  $(p, t_0)$ and $(q,
              t_0)$.  Hence
\[1=d_{\R^n\times \{t_0\}}(p,q)\le \ell(\phi_{t_0-t_1}(c))\le
              e^{-(t_0-t_1)\alpha_1}\ell(c)\le \frac{2}{\alpha_r}e^{-(t_0-t_1)\alpha_1}\sinh
\left(8\alpha_r\delta\right). \]
 It follows that
   $t_0-t_1\le  C_1$, where
\[C_1= \frac{\ln[\frac{2}{\alpha_r}\sinh \left(8\alpha_r
\delta\right)]}{\alpha_1}.
\]
 The triangle inequality then implies $d((p, t_0), \sigma)\le
 C_1+4\delta$. Hence $(p, t_0)$ is a $C$-quasicenter for
  $p, q,\xi_0$, where $C=\max\{1, C_1+4\delta\}$.

\end{proof}

\b{Le}\label{g3} {Let $p, q\in \R^n\approx \p G_A\backslash
\{\xi_0\}$   and
 suppose  that  $D_e(p,q)=e^{t_0}$.  \newline
 (1) If  $t_1, t_2<t_0$, then
 $|d((p,t_1),(q,t_2))-(t_0-t_1)-(t_0-t_2)|\le C$,  where $C$ depends
only on $\alpha_1$ and $\alpha_r$;\newline
 (2)  If $t_1\ge t_0$ or $t_2\ge t_0$, then
$|t_1-t_2|\le d((p,t_1),(q,t_2))\le  |t_1-t_2|+ 1$.

}
\end{Le}

\b{proof}
 (1)
Let $\sigma$ be the geodesic in $G_A$ joining $p$ and $q$,  and
$(x,t)$ the highest point on $\sigma$.  By Lemmas \ref{g1} and
\ref{quasicen},  the three points $(p, t_0)$, $(q, t_0)$ and $(x,t)$
are all $c_1$-quasicenters of $\xi_0$,$p$, $q$, where $c_1$ depends
only on $\alpha_1$  and $\alpha_r$. Hence $d((p, t_0), (x,t))\le
c_2$  and $d((q, t_0), (x,t))\le c_2$  for some $c_2=c_2(c_1,
\delta)=c_2(\alpha_1,\alpha_r)$.  Since $t_1<t_0$,
   the convexity of
distance function implies that   $d((p, t_1), m_1)\le d((p,t_0),
(x,t))\le c_2$ for some point $m_1\in \sigma$   lying  between
$(x,t)$ and $p$.
  Similarly, there is some point $m_2\in \sigma$ between $(x,t)$ and
  $q$ with $d((q, t_2), m_2)\le  c_2$.
 By triangle inequality we have $|d((p, t_1), (q,
 t_2))-d(m_1,m_2)|\le 2c_2$.
    Since $d((p,t_0), (x, t))\le c_2$ and $d((p,t_1), m_1)\le c_2$,
    the triangle inequality also implies
    $|d((p,t_0), (p,t_1))-d(m_1, (x,t))|\le 2c_2$. Similarly,
$|d((q,t_0), (q,t_2))-d(m_2, (x,t))|\le 2c_2$.
     Since $d(m_1,m_2)=d(m_1,(x,t))+d((x,t), m_2)$  and $d((p,t_0),
     (p,t_1))=t_0-t_1$, $d((q,t_0),
     (q,t_2))=t_0-t_2$,   the above estimates together yield
$|d((p,t_1),(q,t_2))-(t_0-t_1)-(t_0-t_2)|\le  6 c_2$.

(2) We may assume $t_1\ge t_0$.  Then the convexity of distance
function and the definition of $D_e$  imply
 $$d((p, t_1), (q, t_1))\le d((p, t_0), (q, t_0))\le d_{\R^n\times
 \{t_0\}}((p, t_0), (q, t_0))=1.$$  Now  (2) follows from the
 triangle inequality  and  (\ref{eq:lowerbound}).

\end{proof}

Corollary~\ref{c2} follows from Theorem~\ref{intromain}  and the
following lemma.
  Notice that, by Theorem \ref{intromain}, for any
  quasiisometry   $f: G_A\ra G_A$, the boundary map
  $\p f$ fixes $\xi_0$ and restricts to a homeomorphism of
  $\p G_A\setminus\{\xi_0\}$, which we  still denote by $\p f$.

%Dymarz (\cite[Lemma~7]{D})
%proved that the boundary map of a height-respecting quasiisometry is
%a bilipschitz map with respect to the metric $D$.

%Let $X$ be a proper geodesic $\delta$-hyperbolic space and
%$\xi_1,\xi_2,\xi_3\in \p X$ be three distinct points in the Gromov
%boundary. For any constant $C\ge 0$, a point $x\in X$ is called a
%quasicenter of the three points $\xi_1,\xi_2,\xi_3$ if the distance
%from $x$ to some geodesic $\xi_i\xi_{i+1}$ ($i=1, 2, 3$, $x_4:=x_1$)
%is at most $C$. For any $C\ge 0$, there is a constant $C'$ that depends
%only on $\delta$ and $C$ such that the distance between any two
%quasicenters of $\xi_1,\xi_2,\xi_3$ is at most $C'$.
%
%We note that $G_A$ is a proper geodesic $\delta$-hyperbolic
%space with $\delta$ depending only on the $\alpha_i$'s.

\begin{Le}\label{hightres}
Let $f: G_A\ra G_A$ be a quasiisometry. Then $f$ is
height-respecting if and only if $\p f: (\p G_A\setminus\{\xi_0\},
D)\ra (\p G_A\setminus\{\xi_0\},D)$ is a bilipschitz map.
\end{Le}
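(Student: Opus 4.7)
The plan is to bridge heights in $G_A$ with logarithms of $D_e$-distances on the boundary via the quasicenter machinery of Lemmas \ref{g1} and \ref{quasicen}. Since $D$ is bilipschitz equivalent to $D_e$ (by Lemma \ref{norms} together with $D=D_s^{\alpha_1}$), it suffices to prove the statement with $D_e$ in place of $D$. By Theorem \ref{intromain} we may assume $\partial f$ fixes $\xi_0$, so it restricts to a self-homeomorphism of $\R^n$. Two standard inputs will be used throughout: a quasiisometry sends $C$-quasicenters of ideal triples to $C'$-quasicenters of the image triples, with $C'$ depending only on $C$ and the quasiisometry constants of $f$; and the set of quasicenters of a given ideal triple has diameter controlled only by $\delta$ and the quasicenter parameter.

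For the forward direction, suppose $|h(f(x,t))-t|\le M$ for all $(x,t)$. Fix distinct $p,q\in \R^n$ and set $e^{t_0}=D_e(p,q)$, $e^{s_0}=D_e(\partial f(p),\partial f(q))$. By Lemma \ref{quasicen}, $(p,t_0)$ is a quasicenter of $\{p,q,\xi_0\}$ and $(\partial f(p),s_0)$ is a quasicenter of $\{\partial f(p),\partial f(q),\xi_0\}$. Hence $f(p,t_0)$ is a quasicenter of the second triple, so it lies within a uniform distance of $(\partial f(p),s_0)$; inequality \eqref{eq:lowerbound} then yields $|h(f(p,t_0))-s_0|\le C_1$. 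The height-respecting hypothesis now gives $|s_0-t_0|\le M+C_1$, so $D_e(\partial f(p),\partial f(q))/D_e(p,q)=e^{s_0-t_0}$ is uniformly bounded above and below, which is exactly the bilipschitz condition.

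For the converse, suppose $\partial f$ is $K$-bilipschitz with respect to $D_e$, and fix $(x,t)\in G_A$. Choose a complete geodesic $\sigma$ through $(x,t)$ tangent to the horosphere $\R^n\times\{t\}$, with endpoints $p,q\in \R^n$. By Lemma \ref{g1}, $(x,t)$ is a quasicenter of $\{p,q,\xi_0\}$; setting $e^{t_0}=D_e(p,q)$, Lemma \ref{quasicen} makes $(p,t_0)$ another quasicenter of the same triple, so uniqueness of quasicenters and \eqref{eq:lowerbound} give $|t-t_0|\le C_2$. Applying $f$ and arguing as in the forward direction, with $e^{s_0}=D_e(\partial f(p),\partial f(q))$, we obtain $|h(f(x,t))-s_0|\le C_3$. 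The $K$-bilipschitz hypothesis gives $|s_0-t_0|\le \log K$, and the triangle inequality assembles these into $|h(f(x,t))-t|\le C_2+C_3+\log K$, a bound uniform in $(x,t)$.

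The main obstacle is not any single calculation but the bookkeeping that keeps every additive error uniform in the chosen points. This is made possible by the explicit dependence of the quasicenter constants in Lemmas \ref{g1} and \ref{quasicen} only on $\delta$, $\alpha_1$, and $\alpha_r$, together with the Morse-type uniqueness of quasicenters in a $\delta$-hyperbolic space. With these inputs, both implications reduce to transporting the comparison $(x,t)\leftrightarrow(p,t_0)$ on one hand, and $t_0\leftrightarrow\log D_e(p,q)$ on the other, across the quasiisometry.
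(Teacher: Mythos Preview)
Your proof is correct, and the ``if'' direction follows the paper's argument essentially verbatim: pick a tangent geodesic through $(x,t)$, use Lemmas~\ref{g1} and~\ref{quasicen} to pin down $|t-t_0|$, transport the quasicenter by $f$, and combine with the bilipschitz bound on $\partial f$ to control $|h(f(x,t))-t|$.

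The forward direction, however, differs from the paper. The paper simply invokes Dymarz's result \cite[Lemma~7]{D} that a height-respecting quasiisometry has bilipschitz boundary map. You instead run the quasicenter argument in reverse: from the quasicenter $(p,t_0)$ of $\{p,q,\xi_0\}$, push forward by $f$, compare with $(\partial f(p),s_0)$, and use the height-respecting bound to conclude $|t_0-s_0|$ is uniformly bounded. This is a perfectly valid and more self-contained route; it avoids the external citation at the cost of repeating the mechanism already used for the converse.

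One small correction: your claim that ``$D$ is bilipschitz equivalent to $D_e$'' is not literally true. Lemma~\ref{norms} gives $D_s\asymp D_e$, and $D=D_s^{\alpha_1}$, so $D$ is bilipschitz equivalent to $D_e^{\alpha_1}$, a snowflake of $D_e$. Your reduction still goes through, since a map is bilipschitz with respect to a (quasi)metric $\rho$ if and only if it is bilipschitz with respect to $\rho^\alpha$ for any $\alpha>0$; but the sentence as written should be adjusted. The paper makes the same passage (``Notice that it is also bilipschitz w.r.t.\ $D_e$'') without spelling this out.
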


\begin{proof}
Dymarz (\cite[Lemma~7]{D}) proved that the boundary map of a
height-respecting quasiisometry is a bilipschitz map with respect to
the quasimetric $D_s$. It follows that the  boundary map is also
bilipschitz with respect to the metric $D$. Hence
we only prove the \lq\lq if" part. % since the \lq\lq only if"part was
%proved in \cite[Lemma~7]{D}.
So we assume
   $\p f$ is bilipschitz w.r.t. $D$.  Notice that it is  also
     bilipschitz w.r.t. $D_e$.   Hence
 there is a constant $L\ge 1$ such that for all
$p,q\in\p G_A\setminus\{\xi_0\}=\R^n$,
\[
   {D_e}(p,q)/L\le {D_e}(\p f(p),\p f(q))\le L {D_e}(p,q).
\]
Let $(x,t)\in G_A=\R^n\times \R$. Pick any geodesic $\sigma$ through
$(x,t)$ that is tangent to the horosphere $\R^n\times \{t\}$.  Then
the two endpoints $p, q$  of $\sigma$ are in $\p
G_A\backslash\{\xi_0\}=\R^n$. If $t_0$ is the real number such that
$d_{\R^n\times \{t_0\}}((p, t_0), (q, t_0))=1$, then by the
definition of $D_e$ we have $D_e(p,q)=e^{t_0}$.   By Lemmas \ref{g1}
and \ref{quasicen} both $(x,t)$ and $(p,t_0)$ are $c_1$-quasicenters
of the three points $p,q,\xi_0\in \p G_A$, where $c_1$ depends only
on $\alpha_1$ and $\alpha_r$.    Hence there is a constant $c_2$
depending only on $c_1$  and $\delta$  such that
$d((x,t),(p,t_0))\le c_2$.
%%Since the distance between two horospheres
%%$\R^n\times \{t_1\}$ and  $\R^n\times \{t_2\}$
%%is $|t_1-t_2|$, we conclude that
%It follows from~(\ref{eq:Metric}) that
By~(\ref{eq:lowerbound}), we have $|t-t_0|\le c_2$.
%Let $(x',t')$ be the point on the geodesic $\p f(p)\p f(q)$ with
%the maximal height.
Let $t_0'$ be the real number such that $d_{\R^n\times \{t_0'\}}((\p
f(p), t_0'),  (\p f(q),t_0'))=1$. Then $D_e(\p f(p),\p
f(q))=e^{t_0'}$. Since $f$ is a quasiisometry between
$\delta$-hyperbolic spaces, $f(x,t)$ is a $c_3$-quasicenter of $\p
f(p), \p f(q), \p f(\xi_0)=\xi_0$, where $c_3$ depends only on
$\delta$, $c_1$ and the quasiisometry constants of $f$. As $(\p
f(p),t_0')$ is a $c_1$-quasicenter of these three points, we have
$d((\p f(p), t_0'), f(x,t))\le c_4$, with $c_4$
depending only on $c_1$, $c_3$ and $\delta$.   %, c_1$,
%$\alpha_i, i=1,\cdots,r$, and the
%quasiisometry constants of $f$.
Let $t'$ be the height of $f(x, t)$. Then %by~(\ref{eq:Metric}) again,
by~(\ref{eq:lowerbound}) again, $|t'-t_0'|\le c_4$.

 The bilipschitz assumption of $\p f$ and the formulas  $D_e(\p f(p), \p
 f(q))=e^{t_0'}$  and
$D_e(p,q)=e^{t_0}$  imply that $|t_0-t_0'|\le \ln L$. Combining this
with $|t-t_0|\le c_2$ and $|t'-t_0'|\le c_4$, we obtain $|t-t'|\le
\ln L+c_2+c_4$. Hence the heights of any point $(x,t)$ and its image
$f(x,t)$ differ by at most a constant that is independent of
$(x,t)$. The corollary follows.
\end{proof}

\begin{proof}[{\text{\bf{Proof of Corollary~\ref{c0}}}}] Let $f:G_A\ra G_A$
be an $(L,A)$-quasiisometry. By Theorem~\ref{intromain}, the
boundary map $\p f:\p G_A\ra \p G_A$ fixes the point $\xi_0$.
%Then the boundary map $\p f:\p G_A\ra \p G_A$ is a quasisymmetry
%with respect to the visual metric. By Theorem~\ref{intromain}, $\p
%f$ fixes the point $\xi_0$ and $\p f:\p
%G_A\backslash\{\xi_0\}=\R^n\ra \R^n$ is a   bilipschitz map with
%respect to $D$, and hence $L_1$-bilipschitz
 %with
%respect to $D_e$ for some $L_1\ge 1$.
Let $(x_1,t_1),(x_2,t_2)\in\R^n\times\R=G_A$.   Suppose
$D_e(x_1,x_2)=e^{t_0}$.  We only consider the case $t_0>t_1,t_2$,
the other cases being similar.  By Lemma \ref{g3} there is a
constant $c_1=c_1(\alpha_1,\alpha_r)$   such that
\begin{equation}\label{eq:Control}
  |d((x_1,t_1),(x_2,t_2))-(t_0-t_1)-(t_0-t_2)|\le c_1.
\end{equation}

Let $t'_i$ ($i=1,2$) be the height of $f(x_i,t_i)$. %Then
  %$f(x_i,t_i)=(\p f(x_i),t'_i)$.
By Corollary~\ref{c2}, there is a
constant $c_2\ge 0$   %depending only on $L_1$ and the $\alpha_i$'s
such that $|t_i-t'_i|\le c_2$. Since $f(\ga_{x_i})$ is an $(L,
A)$-quasigeodesic  joining $\xi_0$ and $\p f(x_i)$, there is a
constant $c_3$ depending only on $L$, $A$ and $\delta$ such that the
Hausdorff distance between $f(\ga_{x_i})$ and $\ga_{\p f(x_i)}$ is
at most $c_3$. Hence there is some $t''_i$ such that $d((\p
f(x_i),t''_i),f(x_i,t_i))\le c_3$. It follows that $|t'_i-t''_i|\le
c_3$ and hence $|t_i-t''_i|\le c_2+c_3$.

Suppose   $D_e(\p f(x_1), \p f(x_2))=e^{t'_0}$.
 By Lemma \ref{quasicen}  $(\p f(x_1),t'_0)$  is a $c_4$-quasicenter for
$\xi_0$, $\p f(x_1)$,   $\p f(x_2)$, where $c_4=c_4(\alpha_1,
\alpha_r)$.
  Similarly, $(x_1, t_0)$ is a $c_4$-quasicenter for $\xi_0$, $x_1$,
  $ x_2$.
On the other hand, since $f$ is an $(L, A)$ quasiisometry,
$f(x_1,t_0)$ is a $c_5$-quasicenter of $\xi_0$, $\p f(x_1)$ and $\p
f(x_2)$,  where   $c_5=c_5(L, A, c_4, \delta)$. It follows that
$d((\p f(x_1),t'_0),f(x_1,t_0))\le c_6$ for some constant
$c_6=c_6(c_4, c_5, \delta)$. Let $t''_0$ be the height of $f(x_1,
t_0)$. Then $|t'_0- t''_0|\le c_6$. By Corollary~\ref{c2} we have
$|t_0-t''_0|\le c_2$. % for some $c_6=c_6(L, A, \delta)$.
  Hence  $|t_0-t'_0|\le c_6+c_2$.

  Next we consider two cases:\newline
  {\bf{Case 1}}.  Both $t_1'', t_2''<t_0'$. \newline
  In this case, by Lemma \ref{g3} (1)  again we have
\[
  |d((\p f(x_1),t''_1), (\p f(x_2), t''_2))-(t'_0-t''_1)-(t'_0-t''_2)|\le c_1.
\]
Combining this with~(\ref{eq:Control}) and the estimates
$|t_i-t''_i|\le c_2+c_3$, $|t_0- t'_0|\le c_6+c_2$, and $d((\p
f(x_i), t''_i),f(x_i, t_i))\le c_3$, we obtain
\[
  |d((x_1,t_1),(x_2,t_2))-d(f(x_1, t_1), f(x_2, t_2))|
               \le 2c_1+4c_2+4c_3+2c_6.
\]
%for all $(x_1,t_1),(x_2, t_2)\in G_A$. % where  $C$ depends only on $L,A$
%and the $\alpha_i$'s.

\noindent
 {\bf{Case 2}}.  Either $t_1''\ge t_0'$ or $t_2''\ge t_0'$.
\newline
   Without loss of generality,  we may assume $t_1''\ge t_0'$
     and  $t_1''\ge t_2''$.  Then Lemma \ref{g3} (2) implies
\b{equation}\label{co1}
  |d((\p f(x_1),t''_1), (\p f(x_2), t''_2))-(t''_1-t''_2)|\le 1.
\end{equation}
 On the other hand,  $t_1''\ge t_0'$  and the assumption
  $t_0>t_1$  together with
$|t_0-t'_0|\le c_6+c_2$  and  $|t_i-t''_i|\le c_2+c_3$
  imply that
  $|t_0-t_1|\le 2c_2+c_3+c_6$.  Now it follows from
    (\ref{eq:Control})  and the triangle inequality that
\b{equation}\label{co2}
 |d((x_1, t_1), (x_2, t_2))-(t_1-t_2)|\le
  %c_1+2(2c_2+c_3+c_6)=
c_1+4c_2+2c_3+2c_6.
\end{equation}
 Now (\ref{co1}), (\ref{co2}),
$|t_i-t''_i|\le c_2+c_3$, and $d((\p f(x_i), t''_i),f(x_i, t_i))\le
c_3$  imply
\[
  |d((x_1,t_1),(x_2,t_2))-d(f(x_1, t_1), f(x_2, t_2))|
               \le 1+c_1+6c_2+6c_3+2c_6.
\]

\end{proof}

\noindent Addresses:

\noindent N.S.: Department of Mathematical Sciences, P.O.Box 210025, University of
Cincinnati, Cincinnati, OH 45221-0025, U.S.A.\hskip .4cm
E-mail: nages@math.uc.edu

\noindent X.X.: Dept. of Mathematical Sciences, Georgia Southern University,
Statesboro, GA 30460, U.S.A.\hskip .4cm
E-mail: xxie@georgiasouthern.edu

 \addcontentsline{toc}{subsection}{References}


\begin{thebibliography}{99}

\bibitem[A]{A} P. Assouad,
\e{Plongements lipschitziens dans $\R^n$}, Bull. Soc. Math. France
{\bf{111}} (1983), 429--448.



\bibitem[B]{B}   Z.  Balogh,
\e{Hausdorff dimension distribution of quasiconformal mappings on
the Heisenberg group,}
     J. Anal. Math.  {\bf{83}} (2001), 289--312.







\bibitem[BK]{BK} M. Bonk, B.  Kleiner,
\e{Rigidity for  quasimobius group actions,}
   J. Differential Geom. {\bf{61}} (2002), no. 1, 81--106.

\bibitem[BP]{BP}
M. Bourdon,    H. Pajot,
  \e{ Rigidity of quasi-isometries for some
hyperbolic buildings. Comment. Math. Helv. {\bf{75}} (2000), no. 4,
701--736.}

\bibitem[BHX]{BHX}{S. Buckley,  D. Herron,  X. Xie,}
   \e{Metric inversions   and quasihyperbolic geometry,}
     {Indiana Univ. Math. J.  {\bf{57}} (2008), no. 2, 837-890.}

\bibitem[BS]{BS}
M. Bonk,    O. Schramm,
 \e{Embeddings of Gromov hyperbolic spaces, }
Geom. Funct. Anal. {\bf{10}} (2000), no. 2, 266--306.

\bibitem[CDP]{CDP}{M. Coornaert, T. Delzant, A. Papadopoulos,}
    \e {G\'eom\'etrie et th\'eorie des groupes,}
       {Lecture Notes in
     Mathematics}  {\bf{1441}} {(1990)}.

\bibitem[D]{D}
  T. Dymarz,
  \e{Large scale geometry of certain solvable groups,}
  to appear in Geometric And Functional Analysis.
  %     http://arxiv.org/pdf/0712.2214.pdf.

\bibitem[EFW1]{EFW1}
A. Eskin, D. Fisher, K. Whyte,
  \e{Coarse differentiation of quasi-isometries I: spaces not quasi-isometric to Cayley graphs,}
  http://arxiv.org/pdf/math/0607207.pdf

\bibitem[EFW2]{EFW2}
A. Eskin, D. Fisher, K. Whyte, \e{Coarse differentiation of
quasi-isometries II: Rigidity for Sol and Lamplighter groups,}
  http://arxiv.org/pdf/0706.0940.pdf

\bibitem[FM1]{FM1}
B. Farb,   L. Mosher,
 \e{On the asymptotic geometry of abelian-by-cyclic groups,     }
    Acta Math. {\bf{184}} (2000), no. 2, 145--202.

    \bibitem[FM2]{FM2}
B. Farb,   L. Mosher,
    \e{A rigidity theorem for the solvable Baumslag-Solitar groups,}
             Invent. Math. 131 (1998), no. 2, 419--451.

 \bibitem[FM3]{FM3}
B. Farb,   L. Mosher,
    \e{Quasi-isometric rigidity for the solvable Baumslag-Solitar groups. II,}
        Invent. Math. 137 (1999), no. 3, 613--649.

\bibitem[GdlH]{GdlH}{E.  Ghys,  P. de la Harpe(editors),}
     \e{Sur les groupes hyperboliques
d'apr\'es Mikhael Gromov,}   {Progress in Mathematics} {\bf{83}}
 {(1990)}.




\bibitem[GV]{GV} F.   Gehring,   J.  Vaisala,
 \e{Hausdorff dimension and
quasiconformal mappings,}
      J. London Math. Soc. (2) {\bf{6}} (1973),
504--512.





%\bibitem[H]{H}
 %E. Heintze,
 %\e{On homogeneous manifolds of negative curvature,}
  %  Math. Ann. {\bf{211}} (1974), 23--34.



\bibitem[HI]{HI}  E.  Heinze, H.C. Im Hof,
\e{Geometry of horospheres,}
   J. Diff. Geom. vol. {\bf{12}} (1977),
481-491.

\bibitem[HP]{HP}  S.  Hersonsky,   F. Paulin,
\e{On the rigidity of discrete isometry groups of negatively curved
spaces,}    Comm. Math. Helv. {\bf{72}} (1997), 349 --388.



\bibitem[K]{K} B. Kleiner,
\e{Unpublished notes,}  2001.

\bibitem[P1]{P1}  P. Pansu,
\e{Dimension conforme et sphere a l'infini des varietes a courbure
negative,}
   Ann. Acad. Sci. Fenn. Ser. A I Math. {\bf{14}} (1989), no. 2, 177--212.

\bibitem[P2]{P2}  P. Pansu,
\e{Metriques de Carnot-Caratheodory et quasiisometries des espaces
symetriques de rang un,}
  Ann. of Math. (2) {\bf{129}}
(1989), no. 1, 1--60.

\bibitem[Pe]{Pe}  I.  Peng,
\e{Coarse differentiation and quasi-isometries of a class of
solvable Lie groups I,}   http://front.math.ucdavis.edu/0802.2596

\bibitem[T1]{T1}  J.  Tyson,
\e{Quasiconformality and quasisymmetry in metric measure spaces,}
Ann. Acad. Sci. Fenn. Math.  {\bf{23}}  (1998),  no. 2, 525--548.

\bibitem[T2]{T2}  J. Tyson,
\e{Metric and geometric quasiconformality in Ahlfors regular Loewner
spaces,}    Conform. Geom. Dyn.  {\bf{5}}  (2001), 21--73.

\bibitem[V1]{Va} J. V\"ais\"al\"a,
\e{Lectures on $n$-dimensional quasiconformal mappings}. Lecture
Notes in Mathematics {\bf 229}, Springer-Verlag Berlin (1989).

\bibitem[V2]{V2}{J. V\"ais\"al\"a,}
   \e{The free quasiworld. Freely quasiconformal and related
     maps in Banach spaces,}  {Quasiconformal geometry and dynamics,
     Banach Center Publ. (Lublin)}  {\bf{48}}  {(1996),} {55--118}.

\bibitem[X]{X}  X. Xie,
  \e{Quasi-isometric rigidity of Fuchsian buildings}. Topology  {\bf{45}} (2006),
no. 1, 101--169.

\end{thebibliography}
\end{document}